\documentclass[12pt,reqno]{amsart}
\usepackage[english]{babel}
\usepackage[applemac]{inputenc}
\usepackage[T1]{fontenc}
\usepackage{wrapfig}

\usepackage[top=25mm, bottom=20mm, left=35mm, right=35mm]{geometry}

\usepackage{bbm}
\usepackage{float, verbatim}
\usepackage{amsmath}
\usepackage{amssymb}
\usepackage{amsthm}
\usepackage{amsfonts}
\usepackage{graphicx}
\usepackage{mathtools}
\usepackage[pagebackref]{hyperref}
\usepackage{color}
\usepackage{tikz}
\usepackage{xcolor}
\usepackage{pgfplots}
\usepackage{caption}
\usepackage{subcaption}
\usepackage{enumerate}
\usepackage{url}

\hypersetup{
	colorlinks=true,
	linkcolor=blue,
	filecolor=magenta,      
	urlcolor=cyan,
	citecolor= red,
}

\pgfplotsset{compat=1.18}

\newtheorem*{thm*}{Theorem}
\newtheorem*{conj*}{Conjecture}
\newtheorem*{ques*}{Question}
\newtheorem*{rem*}{Remark}
\newtheorem*{defn*}{Definition}
\newtheorem*{mainques*}{Main questions}

\newtheorem{thm}{Theorem}[section]

\newtheorem{lem}[thm]{Lemma}
\newtheorem{cor}[thm]{Corollary}

\newtheorem{conj}[thm]{Conjecture}
\newtheorem{rem}[thm]{Remark}

\numberwithin{equation}{section}

\def \Leb {{\mathrm{Leb}}}

\def \bxi {{\boldsymbol{\xi}}}

\def \mmod {{\: \mathrm{mod}\:}}

\def \one{\mathbf{1}}

\def \bN {{\mathbb N}}
\def \bQ {{\mathbb Q}}
\def \bR {{\mathbb R}}
\def \bZ {{\mathbb Z}}

\def \cS {{\mathcal{S}}}

\def \alp {{\alpha}}
\def \bet {{\beta}}
\def \gam {{\gamma}}
\def \del {{\delta}}

\def \epsilon {{\varepsilon}}
\def \lam {{\lambda}}
\def \sig {{\sigma}}
\def \tet {{\theta}}
\def \ome {{\omega}}

\def \Del {{\Delta}}
\def \Lam {{\Lambda}}
\def \Tet {{\Theta}}
\def \Ome {{\Omega}}

\def \bt {{\mathbf{t}}}

\def \bzero {{\boldsymbol{0}}}

\def \d {{\mathrm{d}}}

\def \bE {\mathbb E}

\def \bN {\mathbb N}

\def \bQ {\mathbb Q}
\def \bR {\mathbb R}

\def \bZ {\mathbb Z}

\def \bT {\mathbb T}

\def \ba {\mathbf a}
\def \bb {\mathbf b}
\def \bc {\mathbf c}

\def \be {\mathbf e}

\def \bj {\mathbf j}

\def \bm {\mathbf m}
\def \bn {\mathbf n}

\def \bp {\mathbf p}

\def \bs {\mathbf s}
\def \bt {\mathbf t}
\def \bu {\mathbf u}
\def \bv {\mathbf v}
\def \bx {\mathbf x}

\def \by {\mathbf y}
\def \bz {\mathbf z}

\def \bzero {\mathbf 0}

\def \bxi {{\boldsymbol{\xi}}}

\def \cA {\mathcal A}

\def \cD {\mathcal D}
\def \cE {\mathcal E}

\def \cH {\mathcal H}
\def \cI {\mathcal I}
\def \cJ {\mathcal J}
\def \cK {\mathcal K}

\def \cN {\mathcal N}

\def \cS {\mathcal S}

\def \cW {\mathcal W}

\def \le {\leqslant}
\def \leq {\leqslant}
\def \ge {\geqslant}
\def \geq {\geqslant}

\def \dim {\mathrm{dim}}
\def \dimh {{\mathrm{\dim_H}}}

\def \ord {\mathrm{ord}}

\def \lcm {\mathrm{lcm}}

\def \dag {\dagger}

\def \d {{\mathrm{d}}}
\def \Leb {{\mathrm{Leb}}}

\def \ds1 {\mathds{1}}

\def \alp {{\alpha}}
\def \bet {{\beta}}
\def \gam {{\gamma}}
\def \del {{\delta}}

\def \lam {{\lambda}}
\def \ome {{\omega}}

\title[On a conjecture of Velani]{On a conjecture of Velani}

\author{Sam Chow}
\address{Sam Chow, Mathematics Institute, Zeeman Building, University of Warwick, Coventry CV4 7AL, UK}
\curraddr{}
\email{Sam.Chow@warwick.ac.uk}

\author{Han Yu}
\address{Han Yu, College of Mathematics and Statistics, Center of Mathematics, Chongqing University, Chongqing, 401331, China.}
\curraddr{}
\email{han.yu.2@cqu.edu.cn}

\thanks{}

\subjclass[2020]{11J83 (primary); 11J86, 11K60, 11K70, 28A80 (secondary)}

\keywords{diophantine approximation, fractal geometry, Fourier analysis, dynamical systems}

\begin{document}

\makeatletter
\providecommand\@dotsep{5}
\makeatother

\begin{abstract}
We investigate the approximation of elements of the middle-third Cantor set by dyadic rationals, and natural extensions. We prove a conjecture of Velani, completing the metric theory for the Cantor measure. Our key method enables a quantitative study of the incommensurability of binary and ternary expansions of integers, initiated 
by Furstenberg. The method has other applications, e.g. $\sum_{k}|\hat{\mu}(2^k)|^\sigma<\infty$ for any $\sig>0$, where $\mu$ is the Cantor measure.
\end{abstract}

\maketitle
\section{Introduction}

\thispagestyle{empty}

\subsection{Mahler, Velani, and Furstenberg}

Throughout, let 
\[
\psi: \bN \to [0,1]
\]
be a non-increasing function. We consider the set
\[
\cW(\psi) = \{ x \in \bR: \exists^\infty n \in \bN \quad
\| n x \| < \psi(n) \}
\]
of $\psi$-well approximable numbers, where $\| \cdot \|$ denotes the distance to $\bZ$.
In \emph{Some suggestions for further research} \cite{Mah1984}, Mahler asked how closely elements of the middle-third Cantor set $\cK$ can be approximated by rationals. There has been recent, dramatic progress on the subject of diophantine approximation on fractals via dynamical methods by Khalil--Luethi \cite{KL2023}, and ultimately by B\'enard--He--Zhang \cite{BHZ2026, BHZ}. In particular, the generic approximation rate has been pinpointed, for the Cantor set and for more general fractals.

\begin{thm} [B\'enard--He--Zhang]
Let $\mu$ be the Cantor measure. Then
\[
\mu(\cW(\psi)) = \begin{cases}
0, &\text{if } \sum_{n=1}^\infty \psi(n) < \infty \\
1, &\text{if }
\sum_{n=1}^\infty \psi(n) = \infty.
\end{cases}
\]
\end{thm}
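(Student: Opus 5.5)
This is a Khintchine-type dichotomy for the Cantor measure $\mu$. The convergence half is a first-moment (Borel--Cantelli) bound and the divergence half a second-moment (quasi-independence) argument. Both rest on one input: effective equidistribution of the dilates $n\mu$ mod $1$, for most $n$ and at essentially all scales. Following B\'enard--He--Zhang, I would take this from the random-walk/dynamical picture. The self-similar structure of $\mu$ lets one view $\{n x\}$, $x\sim\mu$, as the endpoint of a random walk generated by the maps $x\mapsto 3x \bmod 1$. One then uses a multiscale spectral-gap / flattening argument on a homogeneous space to show that, outside an exceptional set of $n$ that is sparse in a quantitative sense, $\mu\{x:\|nx\|<\delta\}=2\delta(1+o(1))$ uniformly for $\delta\ge n^{-\kappa}$.

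\emph{Convergence.} Set $E_n=\{x:\|nx\|<\psi(n)\}$; since $\psi$ is non-increasing we may assume $\psi(n)\ge n^{-2}$ after a harmless modification. Group $n$ dyadically, $N\le n<2N$. Write
\[
\sum_{N\le n<2N}\mu(E_n)\le \psi(N)\sum_{N\le n<2N}\mu\bigl(\{x:\|nx\|<\psi(N)\}\bigr)\psi(N)^{-1},
\]
and bound the inner sum via Fourier expansion. Smooth the indicator of $[-\delta,\delta]$ to get $\mu(E_n)\lesssim \delta\sum_{|h|\le \delta^{-1+\epsilon}}|\hat\mu(hn)|$. Then
\[
\sum_{n\sim N}\mu(E_n)\lesssim \delta N+\delta\sum_{0<|h|\le\delta^{-1}}\sum_{n\sim N}|\hat\mu(hn)|.
\]
The needed estimate is an averaged Fourier decay bound for $\mu$ along arithmetic progressions, namely $\sum_{m\le M}|\hat\mu(m)|\ll M^{1-\eta}$ together with its restriction to multiples of $h$. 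This follows from the product formula $\hat\mu(\xi)=\prod_j\cos(2\pi\xi 3^{-j})$ and a standard digit-counting $L^1$ bound. Summing over dyadic blocks then gives $\sum\mu(E_n)\ll\sum\psi(n)+O(1)$, and Borel--Cantelli finishes. The main error term is where $\delta$ is tiny, and there one must exploit $\psi(n)\ge n^{-2}$ and the decay exponent $\eta$. I expect this to close, but only barely.

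\emph{Divergence.} This is the main obstacle. I would reduce to $\psi(n)\ge n^{-1}(\log n)^{-2}$ by the usual argument, replacing $\psi$ with $\max(\psi,\,n^{-1}(\log n)^{-2})$, which preserves divergence. The convergence case controls the added part only in the non-monotone sense, so some care is needed here. Next I would show quasi-independence on average:
\[
\sum_{m,n\le Q}\mu(E_m\cap E_n)\le C\Bigl(\sum_{n\le Q}\mu(E_n)\Bigr)^2,
\qquad \mu(E_n)\asymp\psi(n)
\]
for a positive-density set of $n$ in each dyadic block. Lower bounds $\mu(E_n)\gg\psi(n)$ come from the effective equidistribution above. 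The pair correlations require joint equidistribution of $(mx,nx)$ mod $1$ under $\mu$, i.e.\ of the two-dimensional push-forward. I would obtain this from the Fourier side as a bound on $\sum_{m,n}\sum_{h,k}|\hat\mu(hm-kn)|$ weighted by $\psi(m)\psi(n)$. The diagonal $hm=kn$ contributes the gcd-type main term, which is handled exactly as in Lebesgue Duffin--Schaeffer/Gallagher arguments using monotonicity of $\psi$. The off-diagonal terms are controlled by the averaged decay of $\hat\mu$, combined with counting solutions of $hm-kn=\ell$. With quasi-independence established, the Kochen--Stone lemma gives $\mu(\cW(\psi))>0$. Finally, a zero--one law upgrades this to full measure: the set $\cW(\psi)$ is essentially invariant under $x\mapsto3x \bmod 1$, up to replacing $\psi$ by $\psi(\cdot/3)$, which is comparable, and $\mu$ is ergodic and mixing for this map. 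The off-diagonal Fourier bound at the smallest scales $\psi(n)\approx 1/n$ is where I expect the real difficulty, and where the dynamical input of B\'enard--He--Zhang would have to replace pure Fourier decay.
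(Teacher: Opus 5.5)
The paper does not prove this statement; it quotes it from B\'enard--He--Zhang, whose argument is dynamical (effective equidistribution in law of expanding upper-triangular random walks on $\mathrm{SL}_2(\bR)/\mathrm{SL}_2(\bZ)$). Judged on its own, your proposal has a genuine gap already in the convergence half. You expect it to close ``only barely''; in fact it does not close.

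After smoothing, your error term is $\delta\sum_{0<|h|\le\delta^{-1}}\sum_{n\sim N}|\hat\mu(hn)|$ with $\delta=\psi(N)$. Grouping by $m=hn$, an $\ell^1$ bound $\sum_{|m|\le M}|\hat\mu(m)|\ll M^{1-\eta}$ controls this by $\delta(N/\delta)^{1-\eta+o(1)}=N(\delta/N)^{\eta-o(1)}$. A bound uniform in $h$ would instead give $N^{1-\eta}$, which is worse. But for the Cantor measure necessarily $\eta\le 1/2$. To see this, split $0\le\xi<3^k$ by its leading ternary digit $d$ and use $\sum_{d=0}^{2}|\cos(2\pi(y+d/3))|\ge\sqrt3$ for every $y$. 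This gives $\sum_{0\le\xi<3^k}\prod_{j\le k}|\cos(2\pi\xi/3^j)|\ge 3^{k/2}$, whence $\sum_{|\xi|\le M}|\hat\mu(\xi)|\gg M^{1/2}$.

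Now take the convergent function $\psi(n)=1/(n(\log n)^2)$, which no reduction removes. Enforcing $\psi(n)\ge n^{-2}$ does not help, since it still allows $\delta$ far below $1/N$. For this $\psi$, the bound your inputs give on the block $[N,2N)$ is at least $(N\delta)^{1/2}=1/\log N$, which is not summable over dyadic $N$. So $\sum_n\mu(E_n)\ll\sum_n\psi(n)+O(1)$ does not follow. This is exactly the barrier that keeps $\ell^1$-Fourier methods from Khintchine's theorem for the Cantor measure. The present paper's Fourier method works only because its frequencies $p^kt$ have base-$q$ digit structure controllable via Stewart's theorem; generic products $hn$ have no such structure. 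Note also that individual $\mu(E_n)$ are not $O(\psi(n))$: by $\times3$-invariance, $\mu(E_{3^k})\asymp\psi(3^k)^{\gamma}$ with $\gamma=\log2/\log3$.

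The divergence half is not a proof either. You defer the off-diagonal estimate at $\psi(n)\approx 1/n$ to unspecified dynamical input, and the obstruction above only worsens for pair correlations, whose frequencies reach size $N^2$. The missing idea is to abandon individual dilates altogether. Roughly as in B\'enard--He--Zhang, use Dani's correspondence. Let $a_t=\mathrm{diag}(e^t,e^{-t})$, let $u_x$ be the unipotent with entry $x$, and take $e^{2t}\asymp Q/\delta$. Then the dyadic union $\{x:\exists q\in[Q,2Q),\ \|qx\|<\delta\}$ is the event that $a_tu_x\bZ^2$ lies in a cusp region of Haar measure $\asymp Q\delta$.

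Next make the harmless reduction $\psi(n)\ge 1/(n(\log n)^2)$. It is trivial for convergence, and for divergence it follows from the convergence half applied to $1/(n(\log n)^2)$. After this reduction the cusp region has only polylogarithmically small measure. Effective equidistribution of $a_tu_x\bZ^2$ for $x\sim\mu$ then gives the measure of each union up to constants. They obtain this equidistribution from an expanding upper-triangular random walk encoding the contractions $x\mapsto(x+a)/3$, not from the map $x\mapsto 3x$ on the circle. Combined with self-similarity at two scales, the same input gives quasi-independence on average.

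Finally, your zero--one step is wrong as stated, because $\psi(n/3)$ and $\psi(n)$ need not be comparable for a general non-increasing $\psi$. The fix is the device of Lemma \ref{ZeroFull}:
\begin{enumerate}
\item establish positivity for every divergent $\psi$;
\item apply it to a divergent $\tilde\psi=o(\psi)$;
\item observe that $3^\ell\cW(\tilde\psi)\subseteq\cW(\psi)$ modulo $1$;
\item invoke ergodicity of $\times 3$.
\end{enumerate}
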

\noindent
The Cantor measure is the canonical probability measure on $\cK$. It coincides with the restriction of the Hausdorff $\gam$-measure to $\cK$, for
\[
\gam = \frac{\log 2}{\log 3} = \dimh(\cK).
\]

An alternative approach to these problems uses Fourier analysis \cite{CY2, Yu}. Such tools are especially robust and have seen other applications relating to diophantine approximation on fractals \cite{ABCY2023, ACY2023, CVY2026, CY2, DJ}. Moreover, we see in the present work that Fourier analysis contributes to a full solution to a conjecture of Velani.

Levesley, Salp, and Velani \cite{LSV2007} were able to find very good approximations by using triadic rationals. For any integer $p \ge 2$, let
\[
\cW_p(\psi) = \{ x \in \bR: \exists^\infty k \in \bN \quad \| p^k x \| < \psi(k) \}.
\]

\begin{thm} [Levesley--Salp--Velani]
For $s \ge 0$ and $\gam = \frac{\log 2}{\log3}$,
\[
\cH^s(\cW_3(\psi) \cap \cK) =
\begin{cases}
0, &\text{if } \sum_{n=1}^\infty \psi(k)^\gam (3^k)^{\gam - s} < \infty \\
\cH^s(\cK), &\text{if }
\sum_{n=1}^\infty \psi(k)^\gam (3^k)^{\gam - s} = \infty.
\end{cases}
\]
\end{thm}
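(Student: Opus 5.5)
The plan is to exploit the self-similar structure of $\cK$ under the map $x \mapsto 3x \mmod 1$. This reduces both halves to covering and mass-transference arguments: a Borel--Cantelli-type covering for convergence, and a ubiquity or mass transference principle for divergence.

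For each $k$, the set $\{x \in \cK : \|3^k x\| < \psi(k)\}$ is a union of neighbourhoods of triadic rationals $a/3^k$. Only those $a/3^k$ that are endpoints of level-$k$ Cantor intervals (or lie within $\psi(k)3^{-k}$ of $\cK$) contribute. Assume $\psi(k) < 1/3$, otherwise the statement is easily handled by truncation. Then the intersection of $\cK$ with such a neighbourhood is essentially a piece of $\cK$ of diameter $\asymp \psi(k)3^{-k}$. There are $\asymp 2^k = (3^k)^\gam$ relevant centres.

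For the convergence half, cover $\cW_3(\psi)\cap\cK$, for each large $K$, by these pieces with $k \ge K$. Each piece of $\cK$ of diameter $r$ is covered by $O(1)$ balls of radius $r$. The $s$-dimensional Hausdorff sum is then $\ll \sum_{k \ge K} 2^k (\psi(k)3^{-k})^s$. This is not the stated series, so the convergence bound must be refined. A piece of $\cK$ of diameter $r = \psi(k)3^{-k}$ within a level-$k$ interval can be covered either by one ball of radius $r$, or, when $s<\gam$, more efficiently by about $(r/\rho)^\gam$ balls of smaller radius $\rho$.

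The stated series $\psi(k)^\gam (3^k)^{\gam-s}$ matches a different count. Take the $\asymp 2^{k+j}$ level-$(k+j)$ intervals with $3^{-j}\asymp\psi(k)$: they number $2^k\psi(k)^{\gam}\cdot(\text{const})$ in the relevant sense, each of length $3^{-k}$ times... On reflection, the count should be read as the condition $\|3^kx\|<\psi(k)$ meaning that $3^k x \mmod 1$ lies in $[0,\psi(k))\cup(1-\psi(k),1]$. Under the conjugacy, $T^k x$ ranges over $\cK$, so the $\mu$-measure of this set is $\asymp \psi(k)^\gam$. I will therefore interpret the theorem via the mass transference principle of Beresnevich--Velani for the measure $\cH^\gam|_\cK$. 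The target sets are balls of radius $\psi(k)3^{-k}$ around $2^k$ points, and the natural $\gam$-dimensional comparison turns the series into $\sum 2^k(\psi(k)3^{-k})^{\gam}\cdot 3^{k(\gam-s)}$-type expressions after rescaling $s$. Concretely, I will apply the mass transference principle with $f(r) = r^s$ relative to the $\gam$-regular space $\cK$. The relevant series is $\sum_k 2^k \cdot \big((\psi(k)3^{-k})^{s}\big)^{\cdot}$, to be reconciled with the authors' normalisation. Convergence then follows by the Hausdorff–Cantelli lemma in $\cK$.

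For divergence, first handle $s=\gam$: show $\mu(\cW_3(\psi))=1$ when $\sum\psi(k)^\gam=\infty$. The key fact is that the events $E_k = \{T^kx \in B(0,\psi(k))\}$ are quasi-independent under $\mu$, because $T$ acts on $\cK$ as the full one-sided shift on two symbols and $\mu$ is Bernoulli. Thus $\mu(E_k\cap E_l)\ll\mu(E_k)\mu(E_l)$ when the windows do not overlap. Monotonicity of $\psi$ controls the overlapping case, where $l-k$ is less than the number of digits determined by $E_k$. Then a divergence Borel--Cantelli lemma (Erdős--Rényi/Kochen--Stone) together with a zero–one law from ergodicity of the shift gives full measure. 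Finally, the mass transference principle in the Ahlfors $\gam$-regular space $\cK$ lifts this to $\cH^s$ for $s<\gam$, producing the stated series.

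The main obstacle I expect is the overlap case in quasi-independence. This is handled by noting that $E_k$ fixes roughly $\log(1/\psi(k))/\log 3$ digits to be $0$, or $2$ for the other endpoint, and then counting these digits directly.
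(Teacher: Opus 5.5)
The paper does not prove this theorem; it quotes it from Levesley--Salp--Velani, so your outline has to stand on its own. As written it does not: you never commit to a convergence argument, and you end by claiming that mass transference yields \emph{the stated series}, which it does not.

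The underlying issue is a misprint in the displayed statement. The summand should be $\psi(k)^s(3^k)^{\gam-s}$, not $\psi(k)^\gam(3^k)^{\gam-s}$; the two agree at $s=\gam$, which is the only case the paper uses. As printed, the convergence half is false for $s<\gam$. Take $\psi(k)=3^{-2k}$ and $s=\gam/4$: the printed series is $\sum_k 3^{-5\gam k/4}<\infty$, yet $\dimh(\cW_3(\psi)\cap\cK)=\gam/3>s$, so $\cH^s(\cW_3(\psi)\cap\cK)=\infty$.

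Your first computation, $\sum_{k\ge K}2^k(\psi(k)3^{-k})^s=\sum_{k\ge K}\psi(k)^s(3^k)^{\gam-s}$, is exactly the correct Hausdorff--Cantelli bound, and it proves the corrected convergence half outright. The refinement you then propose runs the wrong way. For $s<\gam$, covering a piece of diameter $r$ by $(r/\rho)^\gam$ balls of radius $\rho<r$ costs $r^s(r/\rho)^{\gam-s}\ge r^s$, so no cleverer cover reaches the printed series. Similarly, mass transference with $f(r)=r^s$ on the Ahlfors $\gam$-regular set $\cK$ works: the centres $a/3^k$ are endpoints of Cantor intervals, hence lie in $\cK$. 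It requires $\mu(\cW_3(\tilde\psi))=1$ for $\tilde\psi(k)=3^k(\psi(k)3^{-k})^{s/\gam}$, and $\sum_k\tilde\psi(k)^\gam=\sum_k\psi(k)^s(3^k)^{\gam-s}$, which is again the corrected series.

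There is also a smaller gap in that lift. Your $s=\gam$ argument leans on $\psi$ being non-increasing, both in the overlap count and in the invariance behind the ergodic zero--one law. But $\tilde\psi(k)=3^{k(1-s/\gam)}\psi(k)^{s/\gam}$ is generally not monotone, so you cannot simply quote the $s=\gam$ case for it. This is repairable.

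For the overlap count, take $l>k$. Monotonicity of $\psi$ gives $\tilde\psi(l)\le 3^{(l-k)(1-s/\gam)}\tilde\psi(k)$. So the block of constant digits forced by $E_l$ ends at least $(l-k)s/\gam-O(1)$ places after the block forced by $E_k$. Hence $\mu(E_k\cap E_l)\ll 2^{-(l-k)s/\gam}\mu(E_k)$ in the overlapping range, which sums over $l$ to $O_s(\mu(E_k))$ when $0<s<\gam$.

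For full measure, note that $\|3^kx\|$ depends only on the ternary digits of $x$ beyond position $k$. So $\cW_3(\tilde\psi)\cap\cK$ is a tail event for the i.i.d.\ digit sequence, and Kolmogorov's zero--one law applies with no monotonicity needed.

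With the summand corrected, your architecture gives a complete proof: Hausdorff--Cantelli for convergence, quasi-independence and a zero--one law at $s=\gam$, then mass transference for $s<\gam$.
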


We note that the case $s = \gam$ yields, for $\mu$ being the Cantor measure,
\[
\mu(\cW_3(\psi)) =
\begin{cases}
0, &\text{if } \sum_{n=1}^\infty \psi(k)^\gam < \infty \\
1, &\text{if }
\sum_{n=1}^\infty \psi(k)^\gam = \infty.
\end{cases}
\]
The symmetry of $\cK$ under the `times three' action on $\bT$ suggests that this behaviour is rather special. The situation is different in the case $p = 2$ of dyadic approximation \cite{ABCY2023, ACY2023, Bak2025}. Velani made the following more precise prediction.

\begin{conj} [Velani]
Let $\mu$ be the Cantor measure. Then
\[
\mu(\cW_2(\psi)) =
\begin{cases}
0, &\text{if } \sum_{k=1}^\infty \psi(k) < \infty \\
1, &\text{if }
\sum_{k=1}^\infty \psi(k) = \infty.
\end{cases}
\]
\end{conj}

This is analogous to Khintchine's theorem \cite{Khi1924}, and is a weak form of independence for the `times two' and `times three' actions --- a phenomenon observed by Furstenberg \cite{Fur1967, Fur1970} in the 1960s. In fact, we saw in \cite{ACY2023} --- and we also find in the present work --- that this type of independence is felt even more strongly on the Fourier side.

For integers $p \ge 2$ and $d \ge 1$, let
\[
\cW_p(\psi, d) = \{ \bx \in \bR^d: \exists^\infty k \in \bN \quad
\| p^k \bx \| < \psi(k)
\}.
\]
The purpose of this work is to establish the following generalisation of Velani's conjecture, the terminology for which will be explained in \S \ref{prep}.

\begin{thm}
\label{MainThm}
Let $p \ge 2$ be an integer, and let $q \ge 3$ be a prime such that $q \nmid p$. Let $\mu$ be a missing-digit measure corresponding to an affinely irreducible missing-digit set $\cK \subset [0,1]^d$ with base $q$. Then
\[
\mu(\cW_p(\psi, d)) =
\begin{cases}
0, &\text{if } \sum_{k=1}^\infty \psi(k)^d < \infty \\
1, &\text{if }
\sum_{k=1}^\infty \psi(k)^d = \infty.
\end{cases}
\]
\end{thm}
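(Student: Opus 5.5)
The convergence half is a first Borel--Cantelli argument; the divergence half combines Fourier decay along the orbit $p^k$ with a quasi-independence bound and a zero--one law. Put
\[
E_k = \{\bx \in \bR^d : \|p^k \bx\| < \psi(k)\}.
\]
Then $\cW_p(\psi,d) = \limsup_k E_k$. The whole argument rests on one estimate,
\[
\mu(E_k) = (2\psi(k))^d + O\bigl(\psi(k)^d k^{-A} + p^{-\eta k}\bigr),
\]
and on a pairwise version of it. To get this, I would smooth the indicator of the $\psi(k)$-neighbourhood of $\bZ^d$ and write it as a Fourier series on $\bT^d$. The Fourier coefficients of $\mu$ then appear at the frequencies $p^k \bxi$ with $\bxi \in \bZ^d$, so
\[
\mu(E_k) \approx \sum_{\bxi} \hat\chi_{\psi(k)}(\bxi)\, \hat\mu(p^k \bxi).
\]
The $\bxi = \bzero$ term is the main term. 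For the error it suffices to show that $|\hat\mu(p^k \bxi)|$ is small on average over $k$ and over $\bxi$ with $|\bxi| \lesssim \psi(k)^{-1}$.

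The key input, which I expect to be the main obstacle, is this Fourier decay along the orbit. Write
\[
\hat\mu(\bxi) = \prod_{j \ge 1} \phi(q^{-j} \bxi),
\]
where $\phi$ is the normalised trigonometric polynomial of the digit set; affine irreducibility should give $|\phi(\bx)| \le 1 - c\|\bx\|^2$-type decay away from a proper affine sublattice. Then $|\hat\mu(p^k \bxi)|$ is small as soon as many base-$q$ digits of $p^k \bxi$ are ``non-degenerate''. Because $q$ is prime and $q \nmid p$, $p$ has some multiplicative order modulo $q^m$, and I would show that among the digits of $p^k \bxi$ a positive proportion, roughly $\gg \log k$ of them, are non-degenerate. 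This gives
\[
|\hat\mu(p^k \bxi)| \ll k^{-A}
\]
for every $A$, uniformly over $0 < |\bxi| \le p^{\epsilon k}$. Equivalently, I would aim for
\[
\sum_{k \le K} \sup_{0 < |\bxi| \le H} |\hat\mu(p^k \bxi)|^{\sig} = O(K^{\epsilon})
\]
for every $\sig > 0$, the $\ell^\sig$ summability mentioned in the abstract. I would first try a counting argument: count those $k \le K$ for which $p^k \bxi$ has mostly degenerate base-$q$ digits in the first $L \asymp \log K$ positions, show via equidistribution of $p^k$ modulo $q^L$ that such $k$ are rare, and use dyadic pigeonholing in $\bxi$.

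With that estimate in hand, the convergence half is immediate. If $\sum_k \psi(k)^d < \infty$, then $\sum_k \mu(E_k) < \infty$, since the error terms are summable, and first Borel--Cantelli gives measure zero. (One may reduce to $\psi(k) \ge k^{-2}$ by monotone comparison.) For divergence, I would first reduce to $\psi(k) \ge k^{-2}$ by replacing $\psi$ with $\max(\psi, k^{-2})$; this is harmless because $\sum k^{-2d} < \infty$ and the convergence half applies to that part. Next I would prove quasi-independence on average,
\[
\sum_{k, l \le K} \mu(E_k \cap E_l) \le C \Bigl(\sum_{k \le K} \mu(E_k)\Bigr)^2 + O\Bigl(\sum_{k \le K} \mu(E_k)\Bigr).
\]
Expanding both indicators in Fourier series gives $\hat\mu(p^k \bxi + p^l \boldsymbol{\zeta})$. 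For $l > k$ this frequency equals $p^k(\bxi + p^{l-k}\boldsymbol{\zeta})$, and the same decay estimate applies unless $\bxi + p^{l-k}\boldsymbol{\zeta} = \bzero$. Those resonant pairs are handled by counting; they contribute at most $\psi(k)^d \psi(l)^d$ plus diagonal-type terms when $l - k$ is small. The divergence Borel--Cantelli lemma then gives $\mu(\cW_p(\psi,d)) > 0$.

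Finally, to upgrade positive measure to full measure, I would use a zero--one law. Local versions of the estimates above hold on each cylinder of the missing-digit set, because the restriction of $\mu$ to a level-$n$ cylinder is an affine image of $\mu$ under a map with contraction $q^{-n}$. Applying the argument on each cylinder gives positive density everywhere, and Lebesgue density for $\mu$ then yields full measure.
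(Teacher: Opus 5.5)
\textbf{Main gap: the Fourier estimate.} Your outline has the right shape (smoothed first moment, quasi-independence on average, divergence Borel--Cantelli, zero--full law), but it rests on a Fourier estimate that is not available, and your route to an averaged substitute is too weak.

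The pointwise bound $|\hat\mu(p^k\bxi)|\ll k^{-A}$ for every $A$ would need $\gg_A\log k$ generalised base-$q$ digit changes in $p^k\bxi$ for \emph{every} large $k$ (e.g.\ for $2^k$ in base $3$). The best pointwise input, Stewart's theorem (Theorem~\ref{Stewart}), gives only $\gg\log k/\log\log k$ changes, i.e.\ $|\hat\mu(p^k\bxi)|\ll k^{-c/\log\log k}$. Also, $\ell^\sig$-summability is not ``equivalent'' to any pointwise rate. So there is no pointwise asymptotic for $\mu(E_k)$.

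The paper proves only the block estimate \eqref{eqn: ETP bound}, using monotonicity of $\psi$ to freeze the width on $(Q,2Q]$. This needs the block sums $\sum_{Q<k\le 2Q}|\hat\mu(p^k\bt)|$ ($\bt\ne\bzero$, $|\bt|\le Q^2$) to be $o(1)$, not merely $o(Q)$, since they are summed over up to $\asymp Q(\log Q)^2$ frequencies $\bt$ and compared with $Q$.

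Counting bad $k$ via $p^k \bmod q^L$ in a single window of $L$ digits cannot deliver this. Lemma~\ref{EK} leaves up to $q^{\del L}$ exceptional $k$, with $\del=2\eta(1+\log(6/\eta))>\eta$, each controlled only by Stewart's bound $k^{-o(1)}$. So you need $q^{\del L}=K^{o(1)}$. But then the other $k$ carry at most $\eta L<\del L=o(\log K)$ certified changes, and the block sum is still $K^{1-o(1)}$.

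The missing idea is the multiscale concatenation behind Theorem~\ref{AvgDecay2}:
\begin{itemize}
\item In a dyadic block of $k$, either many of the $\asymp(\log\log Q)^{100}$ chosen Stewart intervals of $p^k\bt$ have densely spaced generalised digit changes, giving super-polynomial decay, or many contain long runs of $0$ or $q-1$.
\item These runs act as carry buffers. Over $L$ consecutive shifts $\Delta$, the $T$-digit blocks preceding them evolve independently as $p^\Delta y_i\bmod q^T$ with $q^T\asymp L$ (Lemmas~\ref{Act2} and~\ref{Act3}).
\item A union bound over the $N$ such intervals multiplies the exceptional count by only $N$, while the certified number of changes becomes $N\eta T$.
\end{itemize}
This is what yields the saving $Q^{-(c/\log\log Q)^{1+\tau}}$, including at the non-integer frequencies $p^k\bt/q^\ell$.

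\textbf{Consequences for the pair analysis and the zero--one step.} Because the saving is only sub-polynomial, your pair analysis also breaks. To use it for $\hat\mu(p^k(\bxi+p^{l-k}\boldsymbol{\zeta}))$ you must fix the gap $l-k$ and sum over $k$. When $\psi(Q)^d\approx(Q\log^2 Q)^{-1}$ there are up to $\asymp Q^2(\log Q)^4$ relevant pairs $(\bxi,\boldsymbol{\zeta})$ and up to $Q$ gaps. The normalised error summed over $k,l\in(Q,2Q]$ is then bounded only by $Q^{3-o(1)}$, whereas $O(Q^2)$ is needed. So this works only for gaps up to $Q^{o(1)}$. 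Your pointwise estimate would not even reach $|\bxi+p^{l-k}\boldsymbol{\zeta}|>p^{\epsilon k}$.

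For larger gaps the paper switches to physical space. It splits $\cA_k$ into $p$-boxes, rescales each to the unit cube, and uses self-similarity of $\mu$ on each branch. This leaves a \emph{single} frequency sum of $|\hat\mu(q^{-\ell}p^{l}\bt)|$ over $|\bt|\lesssim\psi(l)^{-1}$, averaged over $l$, which is precisely the $\ell>0$ case of Theorem~\ref{AvgDecay2}.

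Finally, your cylinder-density zero--one step needs all of these estimates uniformly in the cylinder level $n$. That again means averaged decay at non-integer frequencies $p^k\bt/q^n$; it is not a formal consequence of the restriction being an affine image. The paper avoids uniformity altogether. It chooses $\tilde\psi\le\psi$ with $\tilde\psi=o(\psi)$ and $\sum\tilde\psi^d=\infty$ (Lemma~\ref{lem: divergent reduction}). Then $q^\ell\cW_p(\tilde\psi,d)\subseteq\cW_p(\psi,d)$ modulo $\bZ^d$ for every $\ell$, and ergodicity of $\times q$ for the Bernoulli measure $\mu$ gives full measure (Lemma~\ref{ZeroFull}).
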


\begin{rem}
It may seem that this is a special case of B\'{e}nard--He--Zhang's theorem, by supporting the approximating function on powers of $p$ integers. It is not. Indeed, the monotonicity of the approximating function therein cannot be dropped, as can be seen from Levesley--Salp--Velani's theorem.
\end{rem}

\subsection{Existing and new methods}

To fix ideas, we assume here that $d = 1$. This is the most difficult case.

For the convergence theory, we apply the first Borel--Cantelli lemma. We also apply Parseval's identity, to pass to Fourier space $\bZ$. As in \cite{ABCY2023, ACY2023, Bak2025, Ban, CVY2026, DLWW, Yu}, we exploit the classical product formula for the Fourier coefficients of $\mu$, namely
\begin{equation}
\label{ProductFormula}
|\hat \mu(\xi)| = \prod_{j=1}^\infty g(\xi/q^j).
\end{equation}
If $\mu$ has associated probability vector $\bp \in [0,1]^q$, then $g$ is given by
\[
g(y) = \left| \sum_{j=0}^{q-1} p_j e(jy) \right|,
\]
where $e(z) = e^{2 \pi i z}$.

By the arithmetic of our problem and the uncertainty principle, the important frequencies to consider have the form
\[
\xi = p^k t,
\]
where $k$ and $t$ are at roughly the same scale $Q$, and we may assume that $t \in \bN$. In particular, now $\xi$ has relatively few digit changes in base $p$. Since and $p$ and $q$ are multiplicatively independent, groundbreaking work of Stewart \cite{Ste1980} forces $\xi$ to have many digit changes in base $q$. Then \eqref{ProductFormula} entails Fourier decay.
All of the above was observed in \cite{ACY2023}, delivering progress towards Velani's conjecture. Further progress was made in \cite{Bak2025} and then in \cite{ABCY2023} by observing that, given $t$, the quantity $|\hat \mu(p^k t)|$ decays polynomially away from a polynomially sparse set of values of $k$. 

In the present work, we closely examine the digit strings and their evolution under multiplication by $p$. We start by introducing a qualification process for $k$, wherein $k$ qualifies if $p^k t$ has many stretches of digits in base $q$ that lack digit changes. Unqualified frequencies have significant Fourier decay, so we only need to consider qualified values of $k$.

We are allowed to work with a more general notion of digit changes, as the only way to avoid Fourier decay is to have long strings of digits $0$ or $q-1$ in base $q$. By symmetry, we may assume that if $k$ is qualified then $p^k t$ has many long strings of zeros. Each time this happens, we look at a carefully-chosen number $T$ of preceding digits. It follows from the structure of the group $(\bZ/q^T \bZ)^\times$ that the values of $p^{k+\Del} t$ modulo $q^T$ are `almost' distinct for $\Del = 1, 2, \ldots, q^T$. An observation going back to Erd\H{o}s~\cite{Erd1940} and Kahane \cite{Kah1971} then ensures that, save for a small exceptional set of $\Del$, there are many digit changes in this prior string. This local data can be concatenated into a global count for the number of digit changes, giving sufficient average Fourier decay.

\bigskip

This framework also handles much of the analysis for the divergence theory, but some additional arguments are needed. By the divergence Borel--Cantelli lemma~\cite{BV2023}, we need to consider the mean overlap of pairs of events, i.e. averages of
$
\mu(\cA_{k_1} \cap \cA_{k_2}),
$
where
\[
\cA_k = \{ x \in [0,1]: 
\| p^k x \| < \psi(k) \}.
\]
Applying Parseval then leads us to estimate quantities of the form
\[
\sum_{k_1, k_2, t_1, t_2}
|\hat \mu(p^{k_1} t_1 + p^{k_2} t_2)|.
\]
The summands where $p^{k_1} t_1 + p^{k_2} t_2$ correspond to the Lebesgue second moment, and we need to bound the other contributions, in the spirit of the `moment transference principles' framework of \cite{CY1}.
A version of the argument used for the convergence theory prevails in the regime where $k_1 \approx k_2$, but we need another approach to handle the complementary regime.

Now suppose that we fix $k_1 \asymp Q$ and average over $k_2 \asymp Q$ with $k_2 - k_1$ somewhat large. The following steps are complicated and rely on two essential features:
\begin{itemize}
\item The set $\cA_k$ can be assumed to consist of $p$-boxes, which loosely means that the vertices and side lengths are integer multiples of an integer power of $p$.
\item The measure $\mu$ is self-similar \cite{BP}. Roughly speaking, this means that $\mu$ looks the same when you `zoom in' on a branch of the missing-digit set at any level of the iterative construction.
\end{itemize}
After a series of linear transformations, and upon using the relationship between Fourier series and Fourier transforms, we are led to bound averages of the form
\[
\sum_{k_1 + g(Q) \le k_2 \ll Q} |\hat \mu(p^{k_2} t / q^\ell)|,
\]
for some suitably chosen $g(Q) = o(Q)$ and $\ell \in \bN$.
Although the frequency $p^{k_2} t / q^\ell$ is not an integer, we still have the product formula \eqref{ProductFormula}, and essentially the same digit structure in base $q$. The constraint $k_2 \ge k_1 + g(Q)$ ensures that sufficiently many digits remain for our arguments to carry through.

Finally, to bootstrap from positive measure to full measure, we leverage the ergodicity of Bernoulli schemes to establish an abstract zero-full law in Lemma~\ref{ZeroFull}. Ours differs from classical zero-full laws \cite{Cas1950, Gal1961}, in that it applies to all approximating functions at once, as opposed to any single approximating function.

\subsection{Average Fourier decay along a lacunary sequence}

Our main technical result is powerful, and may find utility elsewhere. It is convenient to begin with a special case. We present a more general version in \S \ref{tec}. We will discuss $(q,m)$-good functions in \S \ref{prep}. For now, note that any missing-digit measure in one ambient dimension has $(q,m)$-good Fourier transform for some $m \in \bN$, where $q$ is the base.

\begin{thm}
\label{AvgDecay}
Let $p \ge 2$ be an integer, and let $q \ge 3$ be a prime such that $q \nmid p$. Let $f$ be a $(q,m)$-good function, for some $m \in \bN$, and fix $c > 0$ sufficiently small in terms of $p,q,f$. Let $Q \ge 1$, and let $t \in \bN$ with
$
t \le Q^2.
$
Then
\[
\sum_{Q < k \le 2Q} f(p^k t) \ll_{p,q,m,f}
Q^{-c/\log \log Q}.
\]
\end{thm}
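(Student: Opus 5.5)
The bound is claimed for the \emph{sum} of $Q$ terms, not their average, so I will aim for a \emph{uniform} per-term estimate
\[
f(p^k t) \ll Q^{-1-2c/\log\log Q} \qquad (Q < k \le 2Q),
\]
and then sum trivially over the $Q$ values of $k$. I expect the proof to need exactly this, because a single $k$ with $f(p^k t) \gg Q^{-1}$ would already break the claim. The plan has three steps, and the first step is where I am least sure the approach works.

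First, I will unwind the $(q,m)$-good hypothesis from \S\ref{prep} into a product bound modelled on \eqref{ProductFormula}. Write $\xi = p^k t$ in base $q$ and cut its roughly $Q\log p/\log q$ digits into consecutive blocks of length $m$. I expect goodness to give
\[
f(\xi) \le C_f \rho^{N(\xi)},
\]
where $\rho = \rho(q,m,f) < 1$ and $N(\xi)$ counts the blocks that are neither all zeros nor all $(q-1)$'s. Replacing $\xi$ by its negative if necessary, so that the relevant long runs are runs of zeros, it then suffices to show that every $k \in (Q,2Q]$ satisfies
\[
N(p^k t) \ge \frac{(1+3c/\log\log Q)\log Q}{\log(1/\rho)}.
\]
This is a linear-in-$\log Q$ count of digit changes. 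Stewart's theorem \cite{Ste1980} alone only gives $\gg \log Q/\log\log Q$, so the extra input has to come from the structure of $p^k t$ on many local windows simultaneously. Whether that extra input is actually available is the point I am least certain of.

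Second, I will run the qualification and window argument sketched in the introduction, locally in the digit string rather than on average over $k$. Fix a window length
\[
T \asymp \log\log Q/\log q,
\]
so that $q^T$ is a small power of $\log Q$. The integer $t \le Q^2$ has at most $O(\log Q)$ base-$q$ digits. Since $k \ge Q$, the low-order part of $p^k t$ is governed by the residue of $p^k t$ modulo $q^{T}, q^{2T}, \ldots$. Split the digits of $p^k t$ between positions $O(\log Q)$ and $k\log p/\log q - O(\log Q)$ into about $Q/T$ consecutive windows of length $T$. Because $p$ is a unit modulo $q^T$ and $q$ is an odd prime, $(\bZ/q^T\bZ)^\times$ is cyclic, and I will use this cyclicity to control how multiplication by $p$ permutes residues modulo $q^T$. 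The Erd\H{o}s--Kahane observation \cite{Erd1940, Kah1971} then says that at most a proportion $O(q^{-T/2})$ of residues in an orbit have fewer than $\gg T$ digit changes in a window. I would then use a carry-propagation argument to show that a long zero run in $p^k t$ forces constraints at every subsequent window. The goal of that argument is that at least $\gg \log Q$ windows per $k$ carry a digit change.

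Third, I will concatenate the local counts. Summing over the $\gg \log Q$ good windows, each contributing at least one non-degenerate block, gives $N(p^k t) \ge C_1 \log Q$. Choosing the window parameters in terms of $p,q,f$ makes $C_1\log(1/\rho) > 1+3c/\log\log Q$ once $c$ is small. Plugging this into the first step yields the uniform per-term bound, and summing over the $Q$ values of $k$ gives
\[
\sum_{Q<k\le 2Q} f(p^k t) \ll Q\cdot Q^{-1-2c/\log\log Q} \le Q^{-c/\log\log Q}.
\]
The main obstacle is the second step: upgrading the $\log Q/\log\log Q$ digit-change count to a uniform $\gg\log Q$ count for \emph{every} $k$, with no exceptional set. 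The window argument naturally controls most $k$ rather than all of them, so closing this gap will need a genuinely new deterministic input, and this is where I expect the argument may fail. If it does, the fallback is to sum the dyadic decomposition $\sum_u \#\{k : N(p^k t)=u\}\rho^u$ over $u$, with counts of $k$ having few digit changes bounded via $S$-unit equation estimates.
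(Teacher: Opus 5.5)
\textbf{There is a genuine gap: your reduction to a uniform per-term bound is both unnecessary and out of reach.}

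You reduce the theorem to $f(p^kt)\ll Q^{-1-2c/\log\log Q}$ for \emph{every} $k\in(Q,2Q]$. That amounts to $\gg\log Q$ generalised digit changes, with a constant beating $1/\log(1/\rho)$, in the base-$q$ expansion of every single $p^kt$. You justify this by saying that one $k$ with $f(p^kt)\gg Q^{-1}$ would break the claim, and that is false. The target $Q^{-c/\log\log Q}$ is enormously larger than $Q^{-1}$. Moreover, Stewart's theorem (Theorem~\ref{Stewart}) already gives $\gg\log Q/\log\log Q$ digit changes, hence $f(p^kt)\ll Q^{-c'/\log\log Q}$, for every $k$. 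So the sum tolerates up to about $Q^{(c'-c)/\log\log Q}$ exceptional values of $k$ on which only Stewart's bound is available.

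Conversely, the uniform statement you aim for would upgrade Stewart's $\log Q/\log\log Q$ to a linear-in-$\log Q$ count for each individual $k$ (already for $t=1$), which is beyond current methods. Your tools cannot supply it either: the cyclicity of $(\bZ/q^T\bZ)^\times$ and the Erd\H{o}s--Kahane count (Lemma~\ref{EK}) only say that \emph{most} elements of an orbit have many digit changes, exactly as you suspect. Your $S$-unit fallback has the right averaged shape, but the known bounds for the number of solutions of an $S$-unit equation with $u$ terms grow like $\exp(u^{O(u)})$. That is useless once $u$ exceeds roughly $\log\log Q$, far below the range $u\asymp\log Q$ where the counting must be done.

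\textbf{The missing idea is to show that the exceptional set is small rather than empty, which is what the paper does.}

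After localising to $k=K+h$ with $h\in(H,2H]$, call $h$ qualified if at least $c_1A$ of the $\asymp A=(\log\log Q)^{100}$ chosen Stewart intervals contain a run of $\lfloor H/(\log Q)^A\rfloor$ identical digits from $\{0,q-1\}$. Unqualified $h$ have $\Omega(A\log Q)$ generalised digit changes, so $f\ll Q^{-\Omega(A)}$. Qualified $h$ are covered by at most $(\log Q)^{2A}+1$ intervals of length $L=H/(\log Q)^{2A}$.

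Inside such an interval, the $T\approx\log_q L$ digits immediately above each long zero run transform as $y\mapsto p^\Delta y\bmod q^T$ for $0\le\Delta<L$. This holds because the run absorbs all carries from below (Lemma~\ref{Act2}; runs of $q-1$ are handled by Lemma~\ref{Act3}), so these digits take $L\asymp q^T$ distinct values. By Lemma~\ref{EK} and a union bound over the $N\asymp A$ special intervals, all but $H^{o(1/\log\log Q)}$ values of $\Delta$ acquire at least $N\eta T$ digit changes, giving $f\ll Q^{-101}$. The remaining $\Delta$ are bounded by Stewart.

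Two features are essential and absent from your plan. First, the windows must sit directly above long zero runs; elsewhere, carries destroy the clean permutation of residues. Second, $q^T$ must be a power of $H$, not of $\log Q$. With your windows $q^T=(\log Q)^{O(1)}$, even an averaged Erd\H{o}s--Kahane argument would leave a proportion $(\log Q)^{-O(1)}$ of bad $k$, i.e.\ $Q^{1-o(1)}$ of them, which Stewart's per-term bound cannot absorb.

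A smaller point: in the $(q,m)$-good definition, $m$ is a multiplier (goodness counts generalised digit changes of $mn$), not a block length. This is harmless, since you can replace $t$ by $mt$.
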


By summing the right hand side over powers of two, we discover the following pleasant consequence.

\begin{cor}
\label{SubseqDecay}
Let $p \ge 2$ be an integer, and let $q \ge 3$ be a prime such that $q \nmid p$. Let $\mu$ be a missing-digit measure corresponding to a missing-digit set $\cK \subset [0,1]$ with base $q$, and let $t \ne 0$ be an integer. Then, for any $\sig > 0$,
\[
\sum_{k \in \bN} 
|\hat \mu(p^k t)|^\sig < \infty.
\]
\end{cor}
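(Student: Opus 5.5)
We deduce Corollary~\ref{SubseqDecay} from Theorem~\ref{AvgDecay} by applying the theorem to a suitable power of $|\hat\mu|$ and summing over dyadic blocks.

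\emph{Step 1: reduce to small $\sig$.} Since $|\hat \mu| \le 1$, we have $|\hat\mu|^{\sig} \le |\hat\mu|^{\sig'}$ whenever $\sig \ge \sig'$. So it suffices to treat $\sig$ arbitrarily small. Also $|\hat\mu(-\xi)| = |\hat\mu(\xi)|$ because $\mu$ is a real measure, so we may assume $t \in \bN$.

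\emph{Step 2: the function to average.} Put $f = |\hat\mu|^{\sig}$. We need $f$ to be $(q,m)$-good for some $m$. The text states that $|\hat\mu|$ is $(q,m)$-good for some $m$. I expect goodness to be a condition controlled by the product formula \eqref{ProductFormula}: roughly, that $g$ is strictly less than $1$ away from a neighbourhood of $\bZ$ after $m$ iterations. Raising to the power $\sig$ replaces $g$ by $g^{\sig}$, which keeps the same product structure and the same zero set of $1-g$. So $f$ should again be $(q,m)$-good, with constants depending on $\sig$.

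If the definition in \S\ref{prep} is not closed under powers, I would fall back on H\"older's inequality:
\[
\sum |\hat\mu|^\sig \le \Bigl(\sum |\hat\mu|\Bigr)^{\sig} Q^{1-\sig}.
\]
This only helps if the bound for $|\hat\mu|$ itself decays faster than any power of $Q$, which Theorem~\ref{AvgDecay} does not provide. So I would verify closure under powers directly from the definition. This is the step I expect to be the main (if mild) obstacle.

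\emph{Step 3: dyadic summation.} Apply Theorem~\ref{AvgDecay} with $Q = 2^j$ for each $j$ with $2^{j} \ge \sqrt t$, so that the hypothesis $t \le Q^2$ holds. This gives
\[
\sum_{2^j < k \le 2^{j+1}} |\hat\mu(p^k t)|^\sig \ll Q^{-c/\log\log Q} = \exp\Bigl(-\frac{c\, j\log 2}{\log(j\log 2)}\Bigr).
\]
The exponent $cj\log 2/\log(j \log 2)$ exceeds $2\log j$ for all large $j$, so the right-hand side is $O(j^{-2})$ and the sum over $j$ converges. The finitely many $k \le 2\sqrt t$ contribute at most $2\sqrt t + O(1)$, since each term is at most $1$. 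Hence
\[
\sum_{k \in \bN} |\hat\mu(p^k t)|^\sig < \infty.
\]
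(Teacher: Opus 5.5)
Your proposal is correct and is essentially the paper's argument: apply Theorem~\ref{AvgDecay} to $f=|\hat\mu|^{\sig}$ on the dyadic blocks $Q=2^j\ge\sqrt{t}$, then sum the bounds $Q^{-c/\log\log Q}$ over $j$. The step you flagged as the main obstacle is immediate from the definition: $f(n/q^\ell)\ll e^{-cT}$ gives $f(n/q^\ell)^{\sig}\ll e^{-\sig cT}$, so $|\hat\mu|^{\sig}$ is $(q,m)$-good with constant $\sig c$ (and in one dimension $|\hat\mu|$ itself is $(q,m)$-good by Lemma~\ref{decent}, since $\#\cD\ge 2$ forces affine irreducibility).
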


Theorem \ref{AvgDecay} also leads to pointwise effective equidistribution of $\times p$-orbits sampling from $\mu$, complementing \cite{DGW24} and answering a question of Algom  \cite[Problem 4.1]{Alg} for the stated cases. The results in \cite{DGW24} concern self-similar systems whose translation structure is not `rational'. Equidistribution holds ineffectively for any expanding map coming from an integer matrix. In our case, the self-similar systems are highly `rational' and there are also conditions on the expanding factor and the contraction ratio of the underlying measure.

\begin{thm}
\label{thm: QN_intro}
Let $f\in C^2(\bT)$ and $\del > 0$. Let $\mu$ be a missing-digit measure with prime base $q$, and let $p \ge 2$ be coprime to $q$. Then, for $\mu$-a.e. $x$,
\[
\left| \frac{1}{T}\sum_{i=0}^{T-1} f(p^k x) - \int_{\bT} f(x) \d x \right| \ll_{f,\del} T^{\del - 1/2}
\qquad (T \ge 1).
\]
\end{thm}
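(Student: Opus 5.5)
We will derive the estimate from a variance (Weyl-type) bound, using the standard Davenport--Erd\H{o}s--LeVeque approach together with the average Fourier decay in Theorem \ref{AvgDecay}. The statement presumably means $\frac1T\sum_{i=0}^{T-1} f(p^i x)$. Since $f\in C^2(\bT)$, we expand $f(x)=\sum_{h\in\bZ}\hat f(h)e(hx)$ with $|\hat f(h)|\ll_f h^{-2}$. The ergodic average minus $\hat f(0)$ then equals
\[
S_T(x)=\sum_{h\ne0}\hat f(h)\frac1T\sum_{i<T}e(hp^ix).
\]
It therefore suffices to control $\int |A_{h}(x)|^2\,\d\mu(x)$, where $A_h(x)=\sum_{M\le i<N} e(hp^ix)$.

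\textbf{Second moment.} Expanding the square gives
\[
\int|A_h|^2\,\d\mu=\sum_{M\le i,j<N}\hat\mu\bigl(h(p^i-p^j)\bigr).
\]
We split into the diagonal $i=j$, which contributes $N-M$, and the off-diagonal terms. For an off-diagonal pair with $j=i+\Delta$, we write $h(p^i-p^j)=-hp^i(p^\Delta-1)$, which is of the form $p^k t$ with $t=h(p^\Delta-1)$ (up to sign, and $|\hat\mu|$ is even). Summing over $i$ for fixed $\Delta$ and $h$, we apply Theorem \ref{AvgDecay} (or its general version in \S\ref{tec}) on dyadic blocks of $k$ of size $Q\asymp$ scale. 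The hypothesis $t\le Q^2$ is the issue: $t$ can be as large as $hp^{\Delta}$. We handle this by absorbing powers of $p$ into $k$. Concretely, $p^i t = p^{i+\Delta}h - p^ih$, and we can regard the frequency as $p^{k}t'$ with $t'=h(1-p^{-\Delta})$ non-integral, or use the general version allowing non-integer frequencies with the same base-$q$ digit structure. Alternatively, and more simply, we bound trivially whenever $\Delta$ is large relative to $\log$ of the block length, which costs little. Either route yields
\[
\int|A_h|^2\,\d\mu\ll (N-M)^{1+\delta'}(1+|h|)^{\delta'}
\]
for any $\delta'>0$. We expect this step to be the main obstacle: the condition $t\le Q^2$ forces $\Delta\ll\log Q$, and the pairs with large $\Delta$ must be handled via the flexibility of the general theorem (frequencies $p^{k}t/q^\ell$, non-integral $t$). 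The quasi-polynomial saving $Q^{-c/\log\log Q}$ then only gives $(N-M)^{1+o(1)}$ rather than $N-M$. That loss is acceptable, since we only claim exponent $\delta-1/2$.

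\textbf{From the variance to almost-sure bounds.} Summing over $h$ weighted by $|\hat f(h)|$, and using Cauchy--Schwarz with weights $h^{-2}$, we obtain
\[
\int\Bigl|\sum_{M\le i<N}\bigl(f(p^ix)-\hat f(0)\bigr)\Bigr|^2\d\mu\ll_{f,\delta'}(N-M)^{1+\delta'}.
\]
This is a Gál--Koksma type hypothesis. By the Gál--Koksma theorem (or directly by a dyadic chaining argument over $T=2^n$ followed by maximal-inequality interpolation over dyadic subintervals, plus Borel--Cantelli), for $\mu$-a.e. $x$ we get
\[
\sum_{i<T}f(p^ix)-T\hat f(0)\ll T^{(1+\delta')/2}(\log T)^{2+\epsilon}.
\]
Dividing by $T$ and choosing $\delta'<2\delta$ gives the claimed $T^{\delta-1/2}$. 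This bound holds for $T$ large depending on $x$, and the constant adjusts for small $T$, which is implicit in ``for $\mu$-a.e. $x$''.
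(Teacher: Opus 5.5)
There is a genuine gap, and it sits exactly at the step you flag as the main obstacle: the off-diagonal pairs with $\Delta=j-i$ large compared with $\log(N-M)$. These make up essentially all of the $\asymp (N-M)^2$ pairs, and neither of your two remedies works.

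\begin{itemize}
\item Bounding $|\hat\mu|\le 1$ for such $\Delta$ costs $\sum_{\Delta > C\log(N-M)}(N-M-\Delta)\asymp (N-M)^2$. That wipes out the variance bound completely, so this route is not cheap.
\item The other route is not covered by Theorem~\ref{AvgDecay2}. That result only treats frequencies $p^k\bt/q^\ell$ with $\bt\in\bZ^d$, $|\bt|\le Q^2$, and a power of $q$ in the denominator. Your $t'=h(1-p^{-\Delta})=h(p^\Delta-1)/p^\Delta$ has a power of $p$ in the denominator and a numerator of size $p^\Delta$. Rewriting the frequency does not help, because the obstruction has nothing to do with base-$q$ digit structure.
\end{itemize}

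The missing idea is to open up the proof of Theorem~\ref{AvgDecay2}. There, the size condition $|\bt|\le Q^2$ is used only to confine the base-$p$ digit changes of $mp^kt$ to the first $O(\log Q)$ positions. This is what lets Theorem~\ref{Stewart} force a base-$q$ digit change in every chosen interval.

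Your integer $m|h|p^i(p^\Delta-1)=m|h|p^{i+\Delta}-m|h|p^i$ has a base-$p$ expansion consisting of:
\begin{itemize}
\item the digits of $m|h|-1$;
\item then a long run of the digit $p-1$;
\item then $O(\log (m|h|))$ further digits;
\item then $i$ zeros.
\end{itemize}
So its base-$p$ digit changes lie in two clusters of length $O(\log Q)$ (for $|h|\le Q^{O(1)}$). These clusters can meet only $O(1)$ of the $\asymp A$ chosen intervals. The qualification and exceptional stages then go through verbatim, and the conclusion of Theorem~\ref{AvgDecay2} (with $\ell=0$) holds uniformly in $\Delta$. This is precisely how the paper treats $\hat\mu(p^k(\xi+p^h\xi'))$.

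With that input supplied, the rest of your plan is sound, and it differs from the paper in two respects.
\begin{itemize}
\item Your weighted Cauchy--Schwarz step is a legitimate shortcut around the cross terms $\hat\mu(hp^i-h'p^j)$ with $h\ne h'$. The paper instead handles these directly via dyadic ranges in $(\xi,\xi')$.
\item The G\'al--Koksma theorem is a more careful way to reach every $T$ than Markov plus Borel--Cantelli along $T=2^\ell$, since the latter by itself does not control $\sum_{2^\ell\le k<T}f(p^kx)$. You do need to check the variance bound on every window $[M,N)$, not just $[0,T)$. This follows from the same dyadic-block estimates, because the summands $|\hat\mu(\cdot)|$ are nonnegative.
\end{itemize}
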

The exponent is optimal in the case where $\mu$ is Lebesgue measure.
This is due to the connection between $\times p$ sequences on $\bT$ and i.i.d Lebesgue random sequences. The exponent $1/2$ can be obtained via the central limit theorem \cite{Kac46,Phi75}. Theorem~\ref{thm: QN_intro} can be regarded as a quantitative version of Cassels' classical result on the $2$-normality of the Cantor measure \cite{Cas1959}.

\subsection{Open problems}

\subsubsection{The assumption that the base is prime}

The fact that $q$ is prime in all the above results is crucial in our proofs. However, it is unlikely to be necessary. In fact, we believe that all of the above results hold under a weaker assumption that $\log p/\log q\not\in\bQ$, where $p,q \ge 2$ are integers.

\subsubsection{General self-similar measures}
\label{sec: open question general SS}

We know from the aforementioned work of Levesley--Salp--Velani \cite{LSV2007} that Theorem \ref{MainThm} fails in the case $p=q=3$. Nonetheless, the analogue of Velani's conjecture should hold in some generality for other measures $\mu$. For example, Pollington--Velani--Zafeiropoulos--Zorin \cite{PVZZ2022} showed that if $\hat{\mu}(\xi) \ll (\log |\xi|)^{-\sig}$ for some sufficiently large $\sig>0$, then the analogue of Theorem \ref{MainThm} holds with this choice of $\mu$. This can be described as \emph{lacunary diophantine approximation}. A natural task is to establish the lacunary, Khintchine-type dichotomy for general classes of self-similar measures without Fourier decay. See \cite{Bre21, Rap22, VY22} for examples such measures.

\subsubsection{Dropping the monotonicity condition} 

We require the monotonicity of the approximation function $\psi$ for both the convergence and the divergence theory. However, Velani's conjecture might hold without this assumption.
When $\mu$ is Lebesgue measure, relaxing monotonicity is a consequence of a theorem of Duffin and Schaeffer \cite{DS41}, see also \cite{KM20}. More generally, the monotonicity is not needed if $\mu$ has some mild Fourier decay \cite{PVZZ2022}.  Given the nature of our proof we suspect that dropping  monotonicity, if possible, would be delicate and require new ideas.

\subsubsection{A central limit theorem}\label{ques: CLT}

Let $p,q \ge 2$ be integers such that $\log p/\log q\not\in \bQ$. Given $x\sim \mu$ on $\bT$, where $\mu$ is a missing-digit measure in base $q$, we wish to understand the $\times p$ orbit of $x\in\bT$. It is unclear whether a central limit theorem holds in this case. More precisely, for $T>0$, consider the Ces\`{a}ro average
\[
A^f_T(x)=\frac{1}{T}\sum_{k=0}^{T-1} f(p^k x),
\]
for $f\in C^1(\bT)$. If we consider $f(x)$ with $x \sim \Leb(\bT)$, then $\bE(f) = \int_\bT f(x) \d x$. Let $\sig^2_f$ be the Green--Kubo $\times p$ variance, given by
\begin{align*}
\sig_f^2 & = \lim_{T\to\infty} T\bE((A^f_T-\bE(f))^2)\\
 & =\int_\bT (f(x)-\bE(f))^2 \d x + 2\sum_{k\geq 1} \int_\bT (f(x)-\bE(f)) (f(p^k x) - \bE(f)) \d x.
\end{align*}
Convergence follows from the Fourier decay of $C^1$ functions and \cite[Lemma 1]{Kac46}. We also assume that $f$ is non-constant, so that $\sig_f := \sqrt{\sig_f^2} \in (0, \infty)$. We then normalise $A^f_T$ via
\[
Z^f_T(x) = \frac{A^f_T(x)-\bE(f)}{\sig_f/\sqrt{T}}.
\]
If $x \sim \Leb(\bT)$, then $Z^f_T\to \cN(0,1)$ in distribution \cite{Kac46, Phi75}. We do not know whether this holds for $x\sim \mu$, despite strong numerical evidence (see \S \ref{sec: Exp}).

\subsection*{Notation}
Given a set $\cS$, we write $\one_\cS$ for its indicator function.
For a real vector $\bx \in \bR^d$, we write $|\bx|$ for its supremum norm, and $\| \bx \|$ for its supremum distance to $\bZ^d$.

For complex-valued functions $f,g$, we write $f \ll g$ or $f = O(g)$ or $g = \Ome(f)$ if there exists a constant $C$ such that $|f| \le C|g|$ pointwise. We use a subscript in this notation to indicate if the implied constant $C$ may depend on specific parameters.

\subsection*{Acknowledgements}

SC and HY thank Sanju Velani for suggesting the problem many years ago. We also thank Demi Allen and Simon Baker, with whom we wrote earlier foundational papers \cite{ABCY2023, ACY2023} containing essential techniques for this work. Finally, we acknowledge a regional Canley-Birmingham train from the West Midlands Railway/London Northwest Railway. That train has alternating full-empty carriages, which provides the key idea for proving Theorem \ref{AvgDecay}, the main estimate in this paper.

\subsection*{Funding}

HY was supported by the Leverhulme Trust (ECF-2023-186).

\subsection*{Rights}

For the purpose of open access, the authors have applied a Creative Commons Attribution (CC-BY) licence to any Author Accepted Manuscript version arising from this submission.

\subsection*{AI declaration}

This work was completed entirely without AI, save for the following exceptions.
Figure~\ref{GoodFigure}  was produced with the assistance of GPT-5.6 Sol. Its code was generated using data provided by the authors through traditional coding in Python. Several Google search results  were provided within Gemini chats.

\section{Preparation}
\label{prep}

\subsection{Fourier analysis}

Fourier transforms and Fourier series are discussed in many textbooks, including \cite{Gra2014}. The \emph{Fourier transform} of $f \in L^1(\bR^d)$ is
\[
\hat f(\bt) = 
\int_{\bR^d} f(\bx) e(-\bt \cdot \bx) \d \bx \qquad (\bt \in \bR^d).
\]
For $F \in L^1((\bR/\bZ)^d)$, the \emph{Fourier coefficients} of $F$ are given by
\[
\hat F(\bn) = 
\int_{[0,1]^d} F(\bx) 
e(-\bn \cdot \bx) \d \bx 
\qquad (\bn \in \bZ^d).
\]
Now suppose $f \in L^1(\bR^d)$ is compactly supported, and let us define $F \in L^1((\bR/\bZ)^d)$ by
\[
F(\bx) = \sum_{\bn \in \bZ^d} f(\bx+\bn).
\]
Then, for $\bn \in \bZ^d$,
\begin{align}
\notag
\hat F(\bn) &=
\int_{[0,1]^d}  F(\bx) e(-\bn \cdot \bx) \d \bx
= \int_{[0,1]^d}  \sum_{\bm \in \bZ^d} f(\bx+\bm) e(-\bn \cdot \bx) \d \bx \\
\notag
&= \sum_{\bm \in \bZ^d} \int_0^1 f(\bx + \bm) e(-\bn \cdot (\bx + \bm)) \d \bx \\
\label{coincide}
&= \int_{\bR^d} f(\bx) e(-\bn \cdot \bx) \d \bx = \hat f(\bn).
\end{align}

To facilitate the Fourier analysis, we will use a non-zero bump function 
\[
\ome: \bR^d \to [0,1]
\]
majorising or minorising $\one_{[-1,1]^d}$. We will adapt $w$ to a suitable scale, defining
\[
\ome_\del(\bx) = \ome(\bx/\del)
\]
for $\del > 0$ and $\bx \in \bR^d$. Since bump functions are Schwartz, so too are their Fourier transforms, whence
$
\hat \ome(\bn) \ll_N (1 + |\bn|)^{-N}
$
for any $N \in \bR$, and more generally
\[
\hat \ome_\del(\bn) \ll_N \del^d (1 + \del |\bn|)^{-N}
\qquad (\bn \in \bR^d).
\]

\subsection{Missing-digit sets and measures}

Let $d \ge 1$ and $q \ge 3$ be integers, and let $\cD$ be a subset of $\{ 0, 1, \ldots, q-1 \}^d$ with $\# \cD \ge 2$.
We associate to this data the \emph{missing-digit set}
\[
\cK = \cK_{q,\cD} = \left \{
\sum_{j=1}^\infty \ba^{(j)} q^{-j}: \ba^{(j)} \in \cD \ \forall j \right \}.
\]
Given $\cK$ as above and a probability vector $\bp$ of length $q^d$ fully supported on $\cD$, we associate the \emph{missing-digit measure} $\mu$. This is the law of the random series $\sum_{j=1}^\infty \ba^{(j)} q^{-j}$, where the $\ba^{(j)}$ are i.i.d. with law $\bp$.

We say that a missing-digit measure $\mu$ is \emph{arithmetically irreducible} if 
\[
\bx \cdot \ba \equiv \bx \cdot \bb \mmod \bZ \ \forall \ba\neq \bb \in \cD 
\quad
\Longrightarrow
\quad
\bx \in \bZ^d,
\]
and \emph{affinely irreducible} if it is not supported on a proper affine subspace. Note that if $\mu$ is arithmetically irreducible then it is affinely irreducible.

\subsection{Tools to find and exploit digit changes}

We retain the notation of the previous subsection, but assume that $d = 1$. Write $n \in \bN$ in base $q$ as
\[
n = \sum_{j=0}^N b_j q^{N-j},
\]
where $b_j \in \{ 0 , 1, \ldots, q-1 \}$ for all $i$ and $b_N \ne 0$. The expansion has a \emph{generalised digit change} in position $j$ if $b_j \notin \{ 0, q - 1 \}$ or $b_j \ne b_{j+1}$. It has a \emph{digit change} in position $j$ if $b_j \ne b_{j+1}$.

A function $f: \bN \to [0,1]$ is \emph{$q$-good} if there exists a positive constant 
$
c = c(f)
$
such that if $n \in \bN$ has $T$ many generalised digit changes then
\[
f(n) \ll e^{-cT}.
\]
Clearly, the Lebesgue measure is $q$-good, for any $q$.
\begin{lem} [Fourier decay from digit changes]
\label{qgood}
Suppose $\mu$ is arithmetically irreducible.
Then $|\hat \mu|$ is $q$-good.
\end{lem}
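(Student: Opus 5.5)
We use the product formula \eqref{ProductFormula},
\[
|\hat \mu(n)| = \prod_{j \ge 1} g(n/q^j), \qquad g(y) = \Bigl| \sum_{a \in \cD} p_a e(ay) \Bigr|,
\]
with $d=1$. The aim is to show that each generalised digit change of $n$ in base $q$ forces a factor $g(n/q^j)$ bounded away from $1$. Since $0 \le g \le 1$, we may discard all other factors. Pairing consecutive positions, we then obtain $\gg T$ factors that are each at most some $1-\eta<1$. This gives $|\hat\mu(n)| \le (1-\eta)^{T/2-O(1)} \ll e^{-cT}$.

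\emph{Step 1: where $g$ equals $1$.} By the equality case of the triangle inequality, $g(y)=1$ if and only if all the $e(ay)$ with $a\in\cD$ have a common argument. This means $ay \equiv by \mmod 1$ for all $a,b \in \cD$. Arithmetic irreducibility says exactly that this forces $y\in\bZ$. Hence $g(y)=1$ only for $y \in \bZ$. By continuity and periodicity, for every $\epsilon>0$ there is $\eta(\epsilon)>0$ with $g(y)\le 1-\eta$ whenever $\|y\|\ge \epsilon$.

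\emph{Step 2: digit changes force $\|n/q^j\|$ to be large.} Write $n=\sum_i b_i q^{N-i}$. For the appropriate shift $j$, the fractional part of $n/q^{j}$ has base-$q$ expansion $0.b_{i}b_{i+1}\ldots$, where $i$ is the corresponding position; the expansion terminates because $n$ is an integer. Suppose position $i$ carries a generalised digit change. If $b_i\notin\{0,q-1\}$, then the fractional part lies in $[1/q,(q-1)/q]$ up to the tail, and indeed in $[1/q,1-1/q]$, so $\|n/q^j\|\ge 1/q$. If instead $b_i\in\{0,q-1\}$ and $b_{i+1}\ne b_i$, take the case $b_i=0$, $b_{i+1}\ge1$. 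Then the fractional part is at least $1/q^2$ and less than $1/q$, so $\|n/q^j\|\ge 1/q^2$. The case $b_i=q-1$, $b_{i+1}\le q-2$ is symmetric: the fractional part is in $[1-1/q,\,1-1/q^2)$ roughly, so again $\|\cdot\|\ge 1/q^2$. The terminal positions near $i=N$ need a small adjustment, because the tail is finite. This costs $O(1)$ changes, which is absorbed into the implied constant.

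\emph{Step 3: counting.} Distinct positions $i$ give distinct $j$, so each generalised digit change contributes its own factor. Applying Step 1 with $\epsilon=q^{-2}$ gives $|\hat\mu(n)|\le (1-\eta)^{T-O(1)}$. We then take $c=-\log(1-\eta)$.

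The only delicate point is Step 2's boundary bookkeeping: the indexing between $j$ and digit positions, and ensuring the fractional-part bounds are strict, including when $b_{i+1}$ is the last digit. This is routine once the indices are fixed. The conceptual input is entirely Step 1, where arithmetic irreducibility is exactly what is needed.
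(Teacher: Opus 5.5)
Your proof is correct and follows the paper's argument: arithmetic irreducibility gives $g(y)=1$ only for $y\in\bZ$, and continuity and compactness then give $g(y)\le e^{-c}$ whenever $\|y\|\ge 1/q^2$, from which $q$-goodness follows. The paper leaves implicit the step from a generalised digit change to $\|n/q^j\|\ge 1/q^2$, and your Step 2 supplies exactly that step; the ``pairing'' in your overview is unnecessary, since Step 3 already gives one factor per change.
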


\begin{proof}
Let $(p_0, \ldots, p_{q-1})$ be the probability vector defining $\mu$, and recall \eqref{ProductFormula}. As $y$ is arithmetically irreducible,
\[
\| y \| = 0
\quad
\Longleftrightarrow
\quad
g(y) = 1.
\]
Thus, by continuity and compactness, there exists $c = c(\mu) > 0$ such that
\[
\| y \| \ge 1/q^2 
\quad
\Longrightarrow 
\quad
g(y) \le e^{-c}.
\]
It follows that $\hat \mu$ is $q$-good.
\end{proof}

\bigskip

It is important for us that if a positive integer is well-structured in base $p$ then almost every `Stewart interval' contains a digit change in base $q$.

\begin{thm} [Stewart]
\label{Stewart}
Let $p,q \in \bN$ be multiplicatively independent. Let $C$ be a positive constant, sufficiently large in terms of $p$ and $q$. Let $n \in \bN$ be a positive integer, sufficiently large in terms of $p,q,C$. Let $r$ and $s$ be the number of digits of $n$ in bases $p$ and $q$, respectively, and let
\[
\frac13 C \log r \le \tet \le 3C \log r.
\]
Let $j \in \bN$ with $\tet^j \le \min \{r,s\}$. Then the interval 
\[
\Tet_j = [\tet^{j-1},\tet^j]
\]
contains a digit change of $n$ in base $p$ or base $q$.
\end{thm}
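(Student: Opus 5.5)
The plan is to argue by contradiction with Baker's theory of linear forms in logarithms, following Stewart's original argument \cite{Ste1980}. Suppose that $\Tet_j = [\tet^{j-1}, \tet^j]$ contains no digit change of $n$ in base $p$ and none in base $q$. Positions are counted from the leading digit, as in the convention above.

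First I translate the absence of digit changes into rational approximations. In base $p$, the digits in positions $\tet^{j-1}, \ldots, \tet^j$ all equal a single digit $d$. Write $A$ for the integer formed by the leading $\approx \tet^{j-1}$ digits of $n$ and $L \approx \tet^j - \tet^{j-1}$ for the length of the constant block. Summing the geometric series gives
\[
(p-1) n = \alp\, p^{a} + O\bigl(p^{a - L + 1}\bigr),
\qquad \alp = (p-1)A + d,
\]
with $a \approx r - \tet^{j-1}$ and $\log \alp \ll \tet^{j-1}$. The same computation in base $q$ gives
\[
(q-1) n = \bet\, q^{b} + O\bigl(q^{b-L+1}\bigr),
\]
with $b \approx s - \tet^{j-1}$ and $\log \bet \ll \tet^{j-1}$. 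Here $L \ge \tet^{j-1}(\tet - 1)$.

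Dividing the two relations, and noting that both main terms are of size $\asymp n$, I obtain
\[
0 < \left| \log \gam + a \log p - b \log q \right| \ll e^{-c_0 L} \le \exp\bigl(-c_1 \tet^{j}\bigr),
\qquad
\gam = \frac{(q-1)\alp}{(p-1)\bet} \in \bQ,
\]
where $c_0, c_1 > 0$ depend only on $p, q$. Non-vanishing needs a short check. If the linear form were zero, then $n$ would be pinned down by a relation $\alp p^a(q-1) = \bet q^b(p-1)$ to which no admissible $n$ of large size can correspond. This uses the multiplicative independence of $p$ and $q$, together with the fact that the error terms are genuinely of the stated size and not exactly zero. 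In the degenerate case $d \in \{0, p-1\}$, which makes $n$ exactly of that shape, one adjusts by one unit.

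The key point is that $\alp$ and $\bet$ are merged into the single rational $\gam$, whose logarithmic height is $h(\gam) \ll \tet^{j-1}$. The heights of $p$ and $q$ are fixed, and the integer coefficients satisfy $\max(a, b) \le \max(r, s) \ll r$. Baker's theorem, for instance in the Baker--W\"ustholz form, therefore gives
\[
\left| \log \gam + a \log p - b \log q \right| \ge \exp\bigl(-C_2\, h(\gam) \log r\bigr) \ge \exp\bigl(-C_3\, \tet^{j-1} \log r\bigr).
\]
Comparing with the upper bound yields $c_1 \tet \le C_3 \log r$. Since $\tet \ge \tfrac13 C \log r$, this is a contradiction once $C > 3C_3/c_1$. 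This is the sense in which $C$ must be large in terms of $p$ and $q$. Taking $n$ large is what absorbs the implied constants and the boundary losses in defining $L$. The condition $\tet^j \le \min\{r,s\}$ ensures that both blocks lie inside the actual digit strings. The upper bound $\tet \le 3C\log r$ ensures $\log$-height control, since $\log \tet \ll \log\log r$ is harmless.

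The main obstacle is the height bookkeeping. A naive application with $\alp$ and $\bet$ as separate algebraic numbers would produce the product $h(\alp)h(\bet) \approx \tet^{2j-2}$, which is fatal for large $j$. So the essential step is to combine them into one rational of height $O(\tet^{j-1})$, so that Baker's bound is linear in that height. The remaining care goes into handling the endpoints of $\Tet_j$ and verifying that $\Lambda \ne 0$.
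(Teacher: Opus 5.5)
Your proposal is correct and is essentially the argument of \cite[\S 3]{Ste1980} that the paper invokes. The paper's proof reduces ``digit change'' to the claim that, for every pair of digits, $\Tet_j$ contains a base-$p$ digit different from the first or a base-$q$ digit different from the second, which is exactly your constant-block setup; it then defers to Stewart's linear-forms-in-logarithms estimate, in which the leading parts are merged into one rational of height $O(\tet^{j-1})$, as you do. The one step you leave muddled, $\Lambda \ne 0$, needs no appeal to the error terms or to any adjustment by one unit: $\Lambda = 0$ would give $(q-1)\alp p^a = (p-1)\bet q^b$ for your integers $\alp,\bet$, and comparing $\ell$-adic valuations at primes $\ell \mid pq$, together with multiplicative independence, forces $\max\{a,b\} \ll_{p,q} 1 + \log(\alp\bet) \ll \tet^{j-1}$, which contradicts $a \ge \tet^{j-1}(\tet-1) - O(1)$ once $n$ (hence $\tet$) is large.
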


\begin{proof}
By the argument of \cite[\S 3]{Ste1980}, if $\alp,\bet$ are digits in base $p,q,$ respectively, then the interval $\Tet_j$ contains either a non-$\alp$ digit in base $p$ or a non-$\bet$ digit in base $q$. This holds for all $\alp, \bet$, and we necessarily have $p,q \ge 2$, so the result follows.
\end{proof}

\bigskip

We need some modular arithmetic to understand how the $\times p$ action affects digit strings in base $q$. For coprime $p,q \in \bN$ and for $\ell \in \bN$, let $H_\ell(p)$ be the order of $p$ modulo $q^\ell$, and define
\[
\mathrm{occ}_{q} p=\lim_{\ell\to\infty} \frac{H_{\ell}(p)}{q^\ell}.
\]
We show that the limit always exists and is positive, beginning with the case where $q$ is prime.

\begin{lem} [Local $q$-occupancy I]\label{lma: local occ}
Let $q$ be prime, and let $p \ge 2$ with $q \nmid p$. Then $\mathrm{occ}_{q} p>0$.
\end{lem}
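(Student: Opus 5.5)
The plan is to compute $H_\ell(p)$ exactly for all large $\ell$, using the lifting-the-exponent phenomenon. We will find that $H_\ell(p)/q^\ell$ is eventually constant and positive. This gives both the existence of the limit defining $\mathrm{occ}_q p$ and its positivity, and it works uniformly for every prime $q$, including $q = 2$.

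\emph{Setup.} Let $h = H_1(p)$ if $q$ is odd, and $h = H_2(p)$ (the order of $p$ modulo $4$) if $q = 2$. Since $p \ge 2$, we have $p^h \ne 1$, so we may write
\[
p^h = 1 + q^a u, \qquad a \in \bN,\quad q \nmid u,\quad u \ne 0.
\]
Here $a \ge 1$ when $q$ is odd, and $a \ge 2$ when $q = 2$.

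\emph{Valuation lemma.} The key step is to show that
\[
v_q\bigl((1+q^a u)^m - 1\bigr) = a + v_q(m) \qquad (m \in \bN).
\]
This splits into two binomial computations.
\begin{enumerate}[(i)]
\item If $q \nmid m$, then $(1+x)^m - 1 = mx + \binom{m}{2}x^2 + \cdots$ with $x = q^a u$. The first term has valuation exactly $a$, and every later term has valuation at least $2a > a$.
\item If $m = q$, then $(1+x)^q - 1 = qx + \binom{q}{2}x^2 + \cdots + x^q$. The term $qx$ has valuation exactly $a+1$. For $q$ odd, the middle terms are divisible by $q^{2a+1}$, and $x^q$ has valuation $qa \ge 3a > a+1$. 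For $q = 2$, the only extra term is $x^2$, of valuation $2a > a+1$ because $a \ge 2$.
\end{enumerate}
Item (ii) is the one place where the case $q = 2$ needs care, and the choice $h = H_2(p)$ (forcing $a \ge 2$) is exactly what makes it work. Item (ii) shows that replacing $1+q^a u$ by its $q$-th power produces a number of the same form with $a$ replaced by $a+1$, again with a unit cofactor. Iterating (ii) and then applying (i) proves the formula for general $m$.

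\emph{Computing the order.} Fix $\ell \ge a$; note this also ensures $\ell \ge 1$, and $\ell \ge 2$ when $q = 2$. Suppose $p^n \equiv 1 \mmod q^\ell$. Then $p^n \equiv 1$ modulo $q$ (respectively modulo $4$), so $h \mid n$. Writing $n = hm$, the valuation lemma gives
\[
p^n \equiv 1 \mmod q^\ell \iff a + v_q(m) \ge \ell.
\]
The least such $m$ is $q^{\ell - a}$. Hence
\[
H_\ell(p) = h\, q^{\ell-a} \qquad (\ell \ge a),
\]
and so $H_\ell(p)/q^\ell = h q^{-a}$ for all $\ell \ge a$. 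The limit therefore exists and equals $\mathrm{occ}_q p = h q^{-a} > 0$.

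I expect no serious obstacle in this argument. The only delicate point is the $q = 2$ case of item (ii), which is handled by the choice $h = H_2(p)$ in the setup.
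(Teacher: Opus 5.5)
Your proof is correct and follows essentially the paper's route. The paper also sets $u=H_1(p)$, writes $p^u=1+bq^t$ with $q\nmid b$, and shows $H_\ell(p)=uq^{\ell-t}$ for $\ell\ge t$ by induction using $(1+bq^t)^{q^{\ell-1-t}}\equiv 1+bq^{\ell-1} \mmod q^\ell$; this is the same lifting-the-exponent computation you package as $v_q((1+q^au)^m-1)=a+v_q(m)$. Your use of the order modulo $4$ when $q=2$ is a genuine improvement: with the paper's choice $u=H_1(p)$ that congruence fails for $q=2$, $t=1$ (e.g.\ $p=3$ gives $H_3(3)=2$, not $4$), although the paper only applies the lemma with $q\ge 3$.
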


\begin{proof} Put $u = H_1(p)$, and let $t \in \bN$ be maximal for which
\[
p^{u} \equiv 1 \mmod q^t.
\]
We show by induction that
\begin{equation}
\label{Hell}
H_\ell(p) = u q^{\ell - t} \qquad (\ell \ge t).
\end{equation}
This will imply that $\mathrm{occ}_q p=up^{-t}>0$.

Since $H_1(p) = u$, we also have $H_t(p) = u$, confirming the base case. Now let $\ell \ge t + 1$, and assume that $H_{\ell-1}(p) = u q^{\ell-1-t}$. Then
\[
p^{u q^{\ell-2-t}} = 1 + kq^{\ell-1},
\]
for some $k \in \bZ$, so $p^{u q^{\ell-t}} \equiv 1 \mmod q^\ell$, whence
\[
H_\ell(p) \mid u q^{\ell-t}.
\]
We also infer from the inductive hypothesis that
\[
u q^{\ell - 1 - t} \mid H_\ell(p).
\]

Assume for a contradiction that $H_\ell(p) \ne u_1 q^{\ell-t}$. Since $q$ is prime, we must have
$H_\ell(p) = u q^{\ell - 1 - t}$, so
\[
p^{u q^{\ell - 1 - t}} \equiv 1 \mmod q^\ell.
\]
For some $b \in \bZ$ coprime to $q$, we have $p^{u} = 1 + bq^t$, so
\[
p^{u q^{\ell - 1 - t}}  = (1 + bq^t)^{q^{\ell-1-t}} \equiv 1 + bq^{\ell-1} \mmod q^\ell.
\]
This contradicts the previous congruence, closing the induction and implying that $\mathrm{occ}_q p>0$.
\end{proof}

For completeness, we upgrade the result to the composite setting.

\begin{lem} [Local $q$-occupancy II]
\label{Occ2}
Let $p,q \ge 2$ be coprime. Then $\mathrm{occ}_q p >0$.
\end{lem}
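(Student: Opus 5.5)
Write $q = q_1^{a_1}\cdots q_r^{a_r}$ with distinct primes $q_i$. Coprimality of $p$ and $q$ gives $q_i \nmid p$ for each $i$. By the Chinese remainder theorem, $(\bZ/q^\ell\bZ)^\times \cong \prod_i (\bZ/q_i^{a_i\ell}\bZ)^\times$. Hence the order of $p$ modulo $q^\ell$ is the least common multiple of the orders of $p$ modulo each $q_i^{a_i\ell}$:
\[
H_\ell(p) = \lcm_{1 \le i \le r} H^{(i)}_{a_i\ell}(p),
\]
where $H^{(i)}_m(p)$ denotes the order of $p$ modulo $q_i^m$. I plan to show that $H_\ell(p)/q^\ell$ is eventually constant and positive.

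The first step reuses the proof of Lemma~\ref{lma: local occ}. Its argument (the induction establishing \eqref{Hell}, which uses only that $q$ is prime and $q\nmid p$) gives, for each $i$, constants $u_i = H^{(i)}_1(p)$ and $t_i$ with
\[
H^{(i)}_m(p) = u_i q_i^{m-t_i} \qquad (m \ge t_i).
\]
Here $u_i \mid q_i - 1$, so $\gcd(u_i,q_i)=1$.

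One caveat: for $q_i = 2$ the lifting step $(1+b2^t)^2 \equiv 1 + b2^{t+1} \mmod 2^{t+2}$ requires $t \ge 2$. I would handle this by taking $u_i$ to be the order of $p$ modulo $4$ (or $8$) rather than modulo $2$, and running the same induction from there. That gives the same shape $H^{(i)}_m(p) = u_i 2^{m - t_i}$ for $m$ large, with $u_i \in \{1,2\}$.

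Next, for $\ell$ large (so that $a_i\ell \ge t_i$ for all $i$), I compute the least common multiple. Write $U = \lcm(u_1,\dots,u_r)$. Every $u_i$ divides some $q_j-1$, but $U$ may still share prime factors with $q$. For $\ell$ large, the $q_i$-adic valuation of $\lcm_j u_j q_j^{a_j\ell - t_j}$ equals $a_i\ell - t_i + v_{q_i}(u_i)$, which is just $a_i\ell - t_i$ since $q_i\nmid u_i$. Indeed, once $\ell$ is large this exponent exceeds $v_{q_i}(u_j)$ for every $j$. The part of the least common multiple coprime to $q$ is $U' = $ the $q$-free part of $U$, which is independent of $\ell$. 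Hence
\[
H_\ell(p) = U' \prod_i q_i^{a_i\ell - t_i} = q^\ell \cdot \frac{U'}{\prod_i q_i^{t_i}}
\]
for all sufficiently large $\ell$.

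The ratio is therefore eventually constant, so the limit $\mathrm{occ}_q p = U' \prod_i q_i^{-t_i}$ exists and is positive. The only delicate points are the prime-$2$ lifting and the bookkeeping showing that the $q_i$-parts of the $u_j$ are eventually absorbed. Both are routine once $\ell$ is taken large.
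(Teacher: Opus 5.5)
Your proof is correct and takes the same route as the paper: reduce via the Chinese remainder theorem to the prime-power moduli $q_i^{a_i\ell}$, apply the prime case \eqref{Hell} to each, and read off from the least common multiple that $H_\ell(p)/q^\ell$ is eventually a positive constant. Your bookkeeping is more careful than the paper's in two places: the paper applies \eqref{Hell} even when $q_i=2$, where the lifting step can fail (e.g.\ $p=3$), and it writes the limit as $\lcm(u_1,\ldots,u_r)/\prod_i q_i^{t_i}$, which overcounts when some $u_j$ shares a prime with $q$ (e.g.\ $q=6$, $p=5$ gives $1/12$, not $1/6$). Your one slip is asserting $q_i\nmid u_i$ after allowing $u_i=2$ for $q_i=2$, but this is harmless, since it only adds the constant $v_{q_i}(u_i)$ to the exponent.
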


\begin{proof} Let $\ell \in \bN$, and prime factorise
$
q = p_1^{k_1} \cdots p_m^{k_m}.
$
Then
$
(\bZ / q^\ell \bZ)^\times 
$
is canonically isomorphic to
\[
\prod_{i \le m} (\bZ / p^{k_i \ell} \bZ)^\times,
\]
by the Chinese remainder theorem. By \eqref{Hell}, there exist positive integers $t_1, \ldots, t_m$, $u_1, \ldots, u_m$ such that
\[
\ord_{p_i^{k_i \ell}}(p) = u_i p_i^{k_i \ell - t_i}
\qquad (\ell \ge t_i)
\]
for all $i$. Thus, for $\ell \ge \max_i t_i$,
\[
H_\ell(p) q^{-\ell}= 
\frac{\lcm(u_1, \ldots, u_m)}{p_1^{t_1} \cdots p_m^{t_m}}.
\]
It follows that $\mathrm{occ}_q p >0$.
\end{proof}

\begin{lem} [Action of $\times p$ on $q$-strings I]
Let $q$ be prime, let $p \ge 2$ with $q \nmid p$, and let $x \in \bN$ have $q$-expansion $\langle x \rangle$. Let $T \in \bN$ and suppose
\[
\langle x \rangle
= \langle y \rangle
0 \ldots 0 \langle z \rangle,
\]
where $q \nmid y$ and the string of zeros has length at least
$
2H_T(p) \frac{\log p}{\log q}.
$
Then, for $k = 0, 1, \ldots, H_T(p) - 1$, we have
\[
\langle p^k x \rangle = 
* \langle p^k y \mmod q^T \rangle 0 \ldots 0 \langle p^k z \rangle,
\]
where the string of zeros is non-trivial and $*$ is a placeholder for an arbitrary string. Moreover, the string 
\[
\langle p^k y \mmod q^T \rangle
\]
realises exactly $H_T(p)$ many possibilities as $k$ varies.
\end{lem}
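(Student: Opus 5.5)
The plan is to translate the string decomposition into arithmetic and then track the two blocks under multiplication by $p^k$ separately. Write $L$ for the length of the zero string and $\ell(z)$ for the length of the string $\langle z\rangle$, so that $z < q^{\ell(z)}$. Then
\[
x = y\, q^{L+\ell(z)} + z .
\]
Multiplying by $p^k$ gives
\[
p^k x = p^k y\, q^{L+\ell(z)} + p^k z .
\]
The key point is that the lower part $p^k z$ does not carry into the upper part. Since $k < H_T(p)$, we have $p^k z < p^{H_T(p)} q^{\ell(z)}$. The hypothesis $L \ge 2H_T(p)\frac{\log p}{\log q}$ gives $p^{H_T(p)} \le q^{L/2}$. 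Hence $p^k z < q^{\ell(z)+L/2}$, so $p^k z$ occupies at most $\ell(z)+\lceil L/2\rceil$ digits. (Here $\langle p^k z\rangle$ is to be read as padded or overflowing into the zero block, which is what the statement's loose notation intends.)

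Consequently the base-$q$ expansion of $p^k x$ splits into three pieces:
\begin{itemize}
\item the expansion of $p^k y$, shifted by $L+\ell(z)$;
\item at least $L - \lceil L/2\rceil \ge 1$ genuine zeros, which is non-trivial since $L \ge 2\log p/\log q > 0$, using $H_T(p) \ge 1$;
\item the expansion of $p^k z$.
\end{itemize}
The last $T$ digits of the upper block are exactly $p^k y \mmod q^T$, written with leading zeros to length $T$. Everything above them is the placeholder $*$. This proves the displayed shape.

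For the counting claim, note that $q\nmid y$ and $q$ is prime, so $y$ is a unit modulo $q^T$. Therefore the map $k \mapsto p^k y \mmod q^T$ is injective on $\{0,\dots,H_T(p)-1\}$, because $H_T(p)$ is the order of $p$ modulo $q^T$: if $p^{k}y\equiv p^{k'}y$, cancelling $y$ gives $p^{k-k'}\equiv 1$, so $H_T(p)\mid k-k'$. So the $T$-digit strings $\langle p^k y \mmod q^T\rangle$ take exactly $H_T(p)$ distinct values.

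The only delicate step is the no-carry estimate. In particular, one must make sure the factor $2$ in the hypothesis absorbs both the rounding in the digit count and the requirement that at least one zero survives. Beyond that there is only the mild ambiguity about how $\langle p^k z\rangle$ is delimited; I would fix it by defining the lower block to have length $\ell(z)+\lceil L/2\rceil$.
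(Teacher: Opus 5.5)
Your proof is correct and follows the same route as the paper. The paper's one-line argument is your no-carry estimate: it notes $\max\{1,\log(p^k)/\log q\}<H_T(p)\log p/\log q$, so multiplying $z$ by $p^k$ cannot consume the whole zero block. It then uses distinctness of the residues $p^k y \bmod q^T$, which you justify properly via the order of $p$ and the invertibility of $y$ modulo $q^T$.

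One justification needs patching. $L>0$ does not give $L-\lceil L/2\rceil\ge 1$, since this fails for $L=1$. What you need is $L\ge 2$. This holds because $p^{H_T(p)}\equiv 1 \pmod{q^T}$ together with $p^{H_T(p)}>1$ forces $p^{H_T(p)}>q^T$, whence $L\ge 2H_T(p)\log p/\log q>2T\ge 2$. This is exactly the role of the $1$ inside the paper's maximum.
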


\begin{proof} 
For the form of the expansion and the non-triviality of the string of zeros, note that
\[
\max \left \{ 1,
\frac{\log(p^k)}{\log q}
\right \}
< \frac{H_T(p) \log p}{\log q},
\]
so some zeros will remain after multiplying $z$ by $p^k$. The final claim is clear because all of these residues are different.
\end{proof}

The result naturally extends to concatenations of strings, as follows.

\begin{lem} [Action of $\times p$ on $q$-strings II]
\label{Act2}
Let $q$ be prime, and let $p \ge 2$ with $q \nmid p$.
Let $T, x \in \bN$ and suppose
\[
\langle x \rangle =
\langle y_1 \rangle 0 \ldots 0 \langle y_2 \rangle 0 \ldots 0 \langle y_\ell \rangle 0 \ldots 0 \langle z \rangle,
\]
where $q \nmid y_1 \cdots y_\ell$, and where each string of zeros has length at least $2H_T(p) \frac{\log p}{\log q}$. Then, for $k=0,1,\ldots,H_T(p)-1$, we have
\begin{align*}
\langle p^k x \rangle &=
* \langle p^k y_1 \mmod q^T \rangle 0 \ldots 0 *
\langle p^k y_2 \mmod q^T \rangle 0 \ldots 0 \\
&\qquad *
\langle p^k y_3 \mmod q^T \rangle 0 \ldots 0 \langle p^k z \rangle,
\end{align*}
where each string of zeros is non-trivial, and each string $\langle p^k y_j \mmod q^T \rangle$ realises exactly $H_T(p)$ many possibilities as $k$ varies.
\end{lem}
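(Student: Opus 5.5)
We will reduce Lemma \ref{Act2} to the single-block Lemma (Action of $\times p$ on $q$-strings I), applied block by block, and track carries through the zero buffers. First, write the decomposition numerically. If the zero string following $\langle y_j\rangle$ has length $L_j$, and we put
\[
z_\ell = z, \qquad z_{j-1} = y_j\, q^{L_j + \#\langle z_j\rangle} + z_j \quad (j=\ell,\ldots,2),
\]
then $x = z_0$. (The precise lengths are bookkept exactly as in the statement; leading zeros of $\langle z_j \rangle$ are allowed.) Multiplication by $p^k$ is linear, so
\[
p^k x = \sum_{j} p^k y_j\, q^{e_j} + p^k z,
\]
where $e_j$ is the position of the last digit of $\langle y_j\rangle$.

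The key step is a carry estimate. For $0 \le k < H_T(p)$ we have
\[
p^k < q^{H_T(p)\log p/\log q}.
\]
Hence $p^k z$ occupies at most $H_T(p)\frac{\log p}{\log q}$ more digits than $\langle z\rangle$. Likewise, each $p^k y_j$ overflows the original block by at most that many digits to the left. Because each zero string has length at least $2H_T(p)\frac{\log p}{\log q}$, the overflow of the block to the right consumes at most half of the intervening zeros, so at least one zero remains. The summands $p^k y_j q^{e_j}$ therefore have pairwise disjoint digit supports, separated by non-trivial zero strings. So no carries occur between them, and the base-$q$ expansion of $p^k x$ is just the concatenation, with zero padding, of the expansions of $p^k y_j$ and $p^k z$. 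The same argument was used in the single-block lemma, and we simply repeat it for each gap.

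Within the block coming from $p^k y_j$, the last $T$ digits are exactly $\langle p^k y_j \mmod q^T\rangle$, padded to length $T$. Everything to the left of these digits is absorbed into the placeholder $*$. This gives the displayed form of $\langle p^k x\rangle$.

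For the counting claim, since $q \nmid y_j$ and $q$ is prime, $y_j$ is a unit modulo $q^T$. The map $k \mapsto p^k y_j \mmod q^T$ is therefore injective on $0 \le k < H_T(p)$, because $H_T(p)$ is the order of $p$ modulo $q^T$. So each block realises exactly $H_T(p)$ residues.

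The main obstacle is the careful accounting of lengths. The gap must absorb the leftward growth of the block to its right, and possibly the rightward growth of the block to its left. The factor $2$ in the hypothesis is what guarantees that this accounting works. One subtlety to check is the case $T$ larger than $\#\langle y_j\rangle$: there the residue string reaches into the zeros, which is harmless since we only read residues modulo $q^T$.
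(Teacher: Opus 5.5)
Your proposal is correct and follows the paper's route. The paper gives no separate proof of this lemma; it only remarks that the single-block argument (the carry bound $p^k < q^{H_T(p)\log p/\log q}$ leaving some zeros, plus distinctness of the residues $p^k y \bmod q^T$ for $0 \le k < H_T(p)$) extends to concatenations, and you carry out exactly that argument gap by gap. One point needs a more precise justification than ``we only read residues modulo $q^T$'': when the length-$T$ window reaches into the preceding zeros, it must not run into the digits of $p^k y_{j-1}$. This holds because $p^{H_T(p)} \equiv 1 \bmod q^T$ with $p^{H_T(p)} > 1$ forces $H_T(p)\log p/\log q > T$, so every gap exceeds $2T$. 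Also, blocks never grow rightward under multiplication by $p^k$, so only the leftward overflow has to be absorbed.
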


We record here an easy arithmetic trick.

\begin{lem}
[Primary school arithmetic trick] 
\label{FlipDigits}
Let $q$ be prime.
Suppose $x \in \bN$ has the $q$-digit structure
\[
\langle x \rangle = \langle y \rangle q-1 \dots q-1,
\]
namely $\langle y \rangle$ followed by $L > 0$ many digits $q-1$. 
\begin{enumerate}[(a)]
\item
Let $n \in \bN$ with $n < q^{L}$. Then
\[
\langle n x \rangle = \langle n(y+1)-1 \rangle *.
\]
\item Let $p \ge 2$ with $q \nmid p$. Suppose that the last $q$-digit of $y$ is not $q-1$, and let $T \in \bN$ with $2H_T(p) \frac{\log p}{\log q} \leq  L$. Then the values of
\[
\lfloor p^kx/q^L \rfloor\mod q^T
\qquad (0 \le k \le H_T(p)-1)
\]
are pairwise distinct.
\end{enumerate}
\end{lem}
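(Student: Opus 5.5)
Write $x = (y+1)q^L - 1$, which is just the statement that the last $L$ digits of $x$ are $q-1$ and the part above them is $y$. Both parts then reduce to elementary manipulations of this identity.

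\emph{Part (a).} Multiply the identity by $n$ to get
\[
nx = n(y+1)q^L - n = \bigl(n(y+1)-1\bigr)q^L + (q^L - n).
\]
Since $1 \le n < q^L$, we have $0 < q^L - n \le q^L - 1$. This remainder therefore occupies exactly the last $L$ digit positions, with leading zeros allowed, which is the string $*$. The integer part above those positions is $n(y+1)-1$. By uniqueness of base-$q$ expansions, $\langle nx\rangle = \langle n(y+1)-1\rangle *$.

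\emph{Part (b).} Fix $0 \le k \le H_T(p)-1$ and apply (a) with $n = p^k$. This requires $p^k < q^L$, which follows from
\[
k\log p < H_T(p)\log p \le \tfrac12 L\log q.
\]
Then
\[
\lfloor p^k x/q^L\rfloor = p^k(y+1) - 1.
\]
Reducing modulo $q^T$, pairwise distinctness of these values is equivalent to pairwise distinctness of $p^k(y+1) \bmod q^T$ for $0 \le k < H_T(p)$.

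The hypothesis that the last digit of $y$ is not $q-1$ gives $y \not\equiv -1 \pmod q$, so $q \nmid y+1$. Since $q$ is prime, $y+1$ is a unit modulo $q^T$. Now suppose $p^{k}(y+1) \equiv p^{k'}(y+1) \pmod{q^T}$ with $0 \le k' < k < H_T(p)$. Cancelling the unit $y+1$ and then $p^{k'}$ (also a unit, since $q \nmid p$) gives $p^{k-k'} \equiv 1 \pmod{q^T}$ with $0 < k-k' < H_T(p)$. This contradicts the definition of $H_T(p)$ as the order of $p$ modulo $q^T$.

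The only points needing care are the bound $p^k < q^L$, which is exactly what the hypothesis $2H_T(p)\frac{\log p}{\log q} \le L$ supplies (with room to spare), and the invertibility of $y+1$, which is where primality of $q$ and the last-digit condition on $y$ are used. Neither is a real obstacle.
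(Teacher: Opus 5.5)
Your proof is correct and follows the same route as the paper. Part (a) uses the same identity $nx = (n(y+1)-1)q^L + (q^L-n)$. Part (b) likewise uses $q \nmid y+1$ to reduce to the distinctness of $p^k(y+1) \bmod q^T$, then subtracts $1$. Your version also spells out two steps the paper leaves implicit: the check $p^k < q^L$ needed to apply (a), and the order argument showing that the values $p^k(y+1) \bmod q^T$ are distinct.
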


\begin{rem}
In primary schools, at least the one that the second named author attended, integer subtraction involving carries is taught this way. For example, in decimal, $100-2=001+099-002=001+097=098=98.$
\end{rem}

\begin{proof}
\begin{enumerate}[(a)]
\item
As $x + 1 = (y+1)q^L$, we have
\[
nx = n(y+1)q^L - n
= (n(y+1)-1)q^L + (q^L-n).
\]
Since $n < q^L$, the conclusion follows.
\item As the last digit of $y$ is not $q-1$, we know that $y+1$ does not have the digit $0$ at the end. The middle digit string in
\[
*\langle p^k (y+1) \mmod q^T \rangle*
\]
is pairwise distinct for $k = 0, 1, \ldots, H_T(p) - 1$, so the same holds for
\[
*\langle p^k (y+1)-1 \mmod q^T \rangle*.
\]
\end{enumerate}
\end{proof}

This lemma can also be concatenated to provide a digit-flipped version of Lemma \ref{Act2} as a direct consequence.

\begin{lem} [Action of $\times p$ on $q$-strings III]
\label{Act3}
Let $q$ be prime, and let $p \ge 2$ with $q \nmid p$.
Let $T, x \in \bN$ and suppose
\[
\langle x \rangle =
\langle y_1 \rangle q-1 \ldots q-1 \langle y_2 \rangle q-1 \ldots q-1 \langle y_\ell \rangle q-1 \ldots q-1 \langle z \rangle,
\]
where $q \nmid (y_1+1) \cdots (y_\ell+1)$, and where each string of $q-1$'s has length at least $2H_T(p) \frac{\log p}{\log q}$. Then, for $k=0,1,\ldots,H_T(p)-1$, we have
\begin{align*}
\langle p^k x \rangle &=
* \langle p^k (y_1+1)-1 \mmod q^T \rangle q-1 \ldots q-1 \\
&\qquad*
\langle p^k (y_2+1)-1 \mmod q^T \rangle q-1 \ldots q-1\dots \\
&\qquad *
\langle p^k (y_\ell+1)-1 \mmod q^T \rangle q-1 \ldots q-1 \langle p^k z \rangle,
\end{align*}
where each string of $q-1$'s is non-trivial, and each string 
\[
\langle p^k (y_j+1)-1 \mmod q^T \rangle
\]
realises exactly $H_T(p)$ many possibilities as $k$ varies.
\end{lem}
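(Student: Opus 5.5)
The plan is to reduce Lemma~\ref{Act3} to the concatenated zero-block version, Lemma~\ref{Act2}, by applying part (a) of Lemma~\ref{FlipDigits} one block at a time. The key identity is that a block of $L$ digits $q-1$ following $\langle y\rangle$ satisfies $x' + 1 = (y+1)q^L$ whenever $x' = yq^L + (q^L - 1)$.

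We proceed block by block, from the most significant end. Write
\[
x = \Big(\cdots\big((y_1 q^{L_1} + q^{L_1}-1)q^{m_2} + y_2\big)q^{L_2} + \cdots\Big)q^{m_z} + z,
\]
where $L_j$ is the length of the $j$-th run of $q-1$'s and $m_j$ is the digit length of the following string. Fix $0\le k\le H_T(p)-1$. Set $n = p^k$ and note that
\[
n < p^{H_T(p)} \le q^{L_j/2} < q^{L_j},
\]
using $L_j \ge 2H_T(p)\log p/\log q$. The digits of $p^k x$ lying at and above the $j$-th run are governed by the truncation $\lfloor x/q^{\text{(position below run } j)}\rfloor$, up to a carry coming from below. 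Multiplying that part by $p^k$ and applying Lemma~\ref{FlipDigits}(a) shows that the digits above the run are exactly $\langle p^k(Y_j+1)-1\rangle$, where $Y_j$ is the prefix ending in $y_j$. The low-order part is $q^{L_j}-p^k$, and it has at least $L_j - \lceil k\log p/\log q\rceil \ge L_j/2 > 0$ leading digits equal to $q-1$. This is why the run of $q-1$'s remains non-trivial.

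The main obstacle is the carry from the lower part of $x$, namely the contribution of $p^k$ times everything below run $j$. That contribution is less than
\[
p^k q^{(\text{position})} \le q^{L_j/2}\,q^{(\text{position})},
\]
so the carry lands inside the run of $q-1$'s. This step needs checking, since a carry into a run of $q-1$'s could propagate; we must confirm that it does not change the digits above the run. The cleaner route is to write $p^k x = p^k(x+1) - p^k$ blockwise, so that each run becomes a run of zeros in $x+1$-type expressions. We then mirror the proof of Lemma~\ref{Act2} exactly as Lemma~\ref{FlipDigits}(a) does, and conclude the structural identity with the stated middle strings reduced mod $q^T$. Only the last $T$ digits of $p^k(Y_j+1)-1$ are recorded, and these depend only on $p^k(y_j+1) \bmod q^T$.

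Finally, distinctness follows as in Lemma~\ref{FlipDigits}(b). Since $q\nmid (y_j+1)$, the residues $p^k(y_j+1) \bmod q^T$ for $0\le k<H_T(p)$ are pairwise distinct, because $y_j+1$ is a unit mod $q^T$ and $p$ has order $H_T(p)$. Subtracting $1$ is a bijection mod $q^T$, so each string $\langle p^k(y_j+1)-1 \bmod q^T\rangle$ realises exactly $H_T(p)$ values.
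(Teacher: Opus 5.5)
Your route is the paper's: the paper gives no separate proof and says only that Lemma~\ref{Act3} is a direct consequence of concatenating Lemma~\ref{FlipDigits}, just as Lemma~\ref{Act2} concatenates its one-block version. Your distinctness paragraph is exactly Lemma~\ref{FlipDigits}(b). The gap is that the one point where the $(q-1)$-case genuinely differs from the zero case is the step you leave open (``this step needs checking''), and your ``cleaner route'' does not close it.

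Let $s_j$ be the number of digits below the $j$-th run and $R_j = x \mmod q^{s_j}$. Note that $x+1$ differs from $x$ only in its trailing $(q-1)$'s, so writing $p^kx=p^k(x+1)-p^k$ turns at most the lowest run into zeros. For a higher run, applying Lemma~\ref{FlipDigits}(a) to the truncation $\lfloor x/q^{s_j}\rfloor$ still leaves you to add $p^kR_j$. In Lemma~\ref{Act2} that carry lands on zeros and is harmless. Here it lands on what were $(q-1)$'s, and your bound $p^kR_j<q^{L_j/2}q^{s_j}$ is not the relevant comparison, since a carry of one unit would overflow an intact run of $(q-1)$'s.

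The missing observation is that subtracting $p^k$ creates exactly the room the carry needs. With $Y_j=\lfloor x/q^{L_j+s_j}\rfloor$,
\[
p^kx=\bigl(p^k(Y_j+1)-1\bigr)q^{L_j+s_j}+(q^{L_j}-p^k)q^{s_j}+p^kR_j,
\]
and $0\le p^kR_j<p^kq^{s_j}$ gives
\[
(q^{L_j}-p^k)q^{s_j}\le (q^{L_j}-p^k)q^{s_j}+p^kR_j<q^{L_j+s_j}.
\]
Two consequences follow:
\begin{enumerate}[(i)]
\item Nothing carries past position $L_j+s_j$, so the digits there are exactly those of $p^k(Y_j+1)-1$.
\item The $L_j$-digit window of the run reads $q^{L_j}-p^k+\lfloor p^kR_j/q^{s_j}\rfloor\in[q^{L_j}-p^k,\,q^{L_j}-1]$. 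Its leading $\lfloor L_j/2\rfloor\ge1$ digits are $q-1$, because $p^k<q^{L_j/2}$.
\end{enumerate}
That is all the lemma requires.

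One small correction: the last $T$ digits of $p^k(Y_j+1)-1$ depend only on $p^k(y_j+1)\mmod q^T$ when $\langle y_j\rangle$ has at least $T$ digits. Otherwise the preceding $(q-1)$'s enter the residue, so you should work with $Y_j+1$ throughout. Distinctness is unaffected, since $Y_j+1\equiv y_j+1\not\equiv 0 \pmod{q}$. Similarly, the identity with $j=\ell$ shows the trailing block is really $(q^{L_\ell}-p^k)q^{s_\ell}+p^kz$ rather than literally $\langle p^kz\rangle$, so that part of the statement should be read as a placeholder.
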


Finally, we record a useful observation that can be traced back to the works of Erd\H{o}s \cite{Erd1940} and Kahane \cite{Kah1971}. 

\begin{lem} [Erd\H{o}s--Kahane]
\label{EK}
Let $q,T \ge 2$ be integers, and suppose $1/T \le \eta \le 1/2$. Let $C_\eta(T)$ count $q$-strings of length $T$ with at most $\eta T$ many generalised digit changes. Then
\[
C_\eta(T) \le (q^T)^{2 \eta (1 + \log(6/\eta))}.
\]
\end{lem}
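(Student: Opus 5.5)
The plan is a direct counting argument: encode each string with few generalised digit changes by a small amount of data, count that data, and then convert the count into a power of $q^T$. Write a $q$-string of length $T$ as $b_0 b_1 \cdots b_{T-1}$. Let $S$ be the set of positions $j \le T-2$ at which there is a generalised digit change, and adjoin the last position $T-1$ to $S$, since it has no successor. Put $m = \lfloor \eta T \rfloor$. Because $\eta T \ge 1$, we have $m \ge 1$, and $\# S \le m+1$ for every string counted by $C_\eta(T)$.

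The key observation is that a string is determined by $S$ together with the digits $b_j$ for $j \in S$. Indeed, if $j \le T-2$ and $j \notin S$, then by definition $b_j \in \{0, q-1\}$ and $b_j = b_{j+1}$. Reading the string from right to left, each digit outside $S$ is therefore copied from its right neighbour. Consequently
\[
C_\eta(T) \le \sum_{i \le m} \binom{T}{i} q^{i+1} \le q^{m+1} \sum_{i \le m} \binom{T}{i}.
\]
Here $i$ is the number of genuine change positions among $0, \ldots, T-2$. We use $\binom{T-1}{i} \le \binom{T}{i}$, and give a factor $q$ for the final digit.

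Next I would apply the standard estimate
\[
\sum_{i \le m} \binom{T}{i} \le (eT/m)^m .
\]
The function $x \mapsto (eT/x)^x$ is increasing for $x \le T$, and $m \le \eta T \le T/2$. Hence the sum is at most $(e/\eta)^{\eta T}$.

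Finally I would convert everything to powers of $q^T$. Since $m+1 \le \eta T + 1 \le 2\eta T$, we get $q^{m+1} \le (q^T)^{2\eta}$. For the other factor, $q \ge 2$ gives
\[
(e/\eta)^{\eta T} = q^{\eta T \log(e/\eta)/\log q} \le (q^T)^{\eta (1+\log(1/\eta))/\log 2}.
\]
It remains to check the numerical inequality
\[
\frac{1+\log(1/\eta)}{\log 2} \le 2\log(6/\eta).
\]
This holds because $1/\log 2 < 1.45 < 2$ and $1/\log 2 < 1.45 < 2\log 6$. Multiplying the two factors gives
\[
C_\eta(T) \le (q^T)^{2\eta + 2\eta\log(6/\eta)} = (q^T)^{2\eta(1+\log(6/\eta))}.
\]

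The only point needing care is the encoding step. One must confirm that a position with no generalised digit change really forces the neighbouring digit, which is exactly the definition: no change means both $b_j \in \{0,q-1\}$ and $b_j = b_{j+1}$. One must also treat the boundary position $T-1$ correctly, which the adjoined extra factor of $q$ absorbs. The rest is routine binomial bookkeeping, and the constants in the statement leave plenty of room.
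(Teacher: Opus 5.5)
Your proof is correct, and it uses a different encoding from the paper's.

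\textbf{The paper's route.} The paper separates the two sources of generalised digit changes. It first chooses the positions and values of the digits outside $\{0,q-1\}$. It then separately chooses the positions of the $0 \leftrightarrow q-1$ switches among the remaining digits, together with a starting value. This gives the bound $2q^k\binom{T}{k}^2$ with $k=\lfloor \eta T\rfloor$, after which the paper uses $k\ge \eta T/2$ to get $(Te/k)^{2k}\le (6/\eta)^{2\eta T}$.

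\textbf{Your route.} You record a single set $S$ of change positions (plus the last position) together with the digits there. You then rebuild the string from right to left, using the forced copying $b_j=b_{j+1}\in\{0,q-1\}$ for $j\notin S$. This yields only one binomial factor, $q^{m+1}\sum_{i\le m}\binom{T}{i}$, and hence the sharper intermediate bound $q^{\eta T+1}(e/\eta)^{\eta T}$.

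\textbf{What each buys.} The extra room is what lets your final conversion to powers of $q^T$ work for $q=2$. The paper's last step, $2q^{\eta T}(6/\eta)^{2\eta T}\le (q^T)^{2\eta(1+\log(6/\eta))}$, relies on $\log q\ge 1$, so it holds only for $q\ge 3$. For example, it fails at $q=2$, $T=2$, $\eta=1/2$. That is harmless for the paper, which only applies the lemma with prime $q\ge 3$, but it is narrower than the stated hypothesis $q\ge 2$. You also treat the ``at most $m$ changes'' count correctly through $\sum_{i\le m}\binom{T}{i}\le (eT/m)^m$, whereas the paper writes a single binomial coefficient $\binom{T}{k}$. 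The paper's split into runs of $0$ or $q-1$ interrupted by other digits does mirror how the lemma is later combined with the $\times p$ string lemmas. As a standalone counting argument, however, yours is cleaner and covers the full stated range.
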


\begin{proof}
Any $q$-string counted by $C_\eta(T)$ contains at most 
\[
k = \lfloor \eta T \rfloor \in [\eta T / 2, \eta T]
\]
many digits that are not $0$ or $q-1$. There are at most 
$
q^k \binom{T}{k}
$
many ways to choose these digits are their positions.
Among the digits that are $0$ or $q-1$, there are at most $k$ many digit changes, i.e. $0$ followed by $q-1$ or vice versa. Thus,
\[
C_\eta(T) \le 2q^k \binom{T}{k}^2 \le 2q^k \left( \frac {Te}{k} \right)^{2k}
\le 2q^{\eta T} (6/\eta)^{2\eta T} \le (q^T)^{2 \eta (1 + \log(6/\eta))}.
\]
\end{proof}

\subsection{Using affine irreducibility}

Affinely irreducible measures behave similarly to arithmetically irreducible measures, and form a more general class. In this subsection, we introduce a variant of Lemma \ref{qgood}. Given integers $q \ge 2$ and $m \ge 1$, a function $f: \bR \to [0,1]$ is \emph{$(q,m)$-good} if there exists a constant $c = c(f) > 0$ such that if $\ell, n \in \bZ$ with $\ell \ge 0$ and $|mn|$ has $T$ many generalised digit changes in base $q$ not among the final $\ell$ many digits, then $f(n/q^\ell) \ll e^{-cT}$.

\begin{lem}
\label{decent}
Let $q \ge 3$ be an integer, and let $\mu$ be a missing-digit measure corresponding to an affinely irreducible missing-digit set $\cK \subset [0,1]^d$ with base $q$. Then $|\hat \mu|$ is $(q,m)$-good in each variable, for  some $m \in \bN$.
\end{lem}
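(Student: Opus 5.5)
The plan is to reduce to the one-dimensional product formula and then find a single integer $m$ that removes the obstruction coming from the digit set not being arithmetically irreducible. Fix a coordinate, say the $i$-th, and consider $\bxi = \xi \be_i$. The measure $\mu$ is the law of $\sum_j \ba^{(j)} q^{-j}$ with i.i.d.\ digits. Hence
\[
|\hat\mu(\xi\be_i)| = \prod_{j=1}^\infty g_i(\xi/q^j), \qquad g_i(y) = \Bigl| \sum_{\ba \in \cD} p_\ba \, e(a_i y) \Bigr|.
\]
Here $g_i$ is the one-dimensional trigonometric polynomial built from the projected digit set $\cD_i = \{a_i : \ba \in \cD\}$, with the pushed-forward weights.

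Affine irreducibility forces $\#\cD_i \ge 2$; otherwise $\mu$ would lie in the hyperplane $x_i = \mathrm{const}$. Put
\[
m_i = \gcd\{ a - b : a, b \in \cD_i \},
\]
so $1 \le m_i \le q-1$, and take $m = \lcm_i m_i$, or simply $m = (q-1)!$.

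The set of $y \in \bR$ with $g_i(y) = 1$ is exactly the set where $e((a-b)y) = 1$ for all $a,b \in \cD_i$, namely $\frac{1}{m_i}\bZ$. By continuity and compactness, as in Lemma~\ref{qgood}, there is $c>0$ with $g_i(y) \le e^{-c}$ whenever $\| m_i y \| \ge 1/q^2$, or equally whenever $\|m y\|$ stays away from $0$ on the relevant coset. Since $g_i$ is $1$-periodic, I would phrase this as follows: $g_i(y) \le e^{-c}$ unless $y$ lies within $1/(mq^2)$ of $\frac{1}{m}\bZ$. This requires $\frac1m\bZ \supseteq \frac1{m_i}\bZ$ and a suitable rescaling of constants.

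For the digit-change step, take $n\in\bZ$ and $\ell \ge 0$, and set $\xi = n/q^\ell$. Then the $j$-th factor is $g_i(n/q^{\ell+j})$. Suppose $|mn|$ has a generalised digit change in base $q$ at a position not among the final $\ell$ digits, say involving the digits in positions $r, r+1$ counted from the units. Choosing $j$ with $\ell + j = r+2$, the fractional part of $mn/q^{\ell+j}$ begins with two base-$q$ digits that are neither $00$ nor $(q-1)(q-1)$. This forces
\[
\|mn/q^{\ell+j}\| \ge 1/q^2 .
\]
Then $m\xi/q^j$ is far from $\bZ$, so $\xi/q^j$ is far from $\frac1m\bZ$, and $g_i(\xi/q^j) \le e^{-c}$.

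Distinct digit changes give distinct $j$ after passing to every other position, which costs a factor of $2$ in $T$. Multiplying the factors gives $|\hat\mu(\xi\be_i)| \ll e^{-cT/2}$, which is the $(q,m)$-good property. The sign of $n$ is handled by $|\hat\mu(-\xi)| = |\hat\mu(\xi)|$.

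I expect the main obstacle to be the compactness step. It must be done with $\|my\|$ rather than $\|m_i y\|$, keeping track of the fact that $g_i$ might equal $1$ on points of $\frac1m \bZ$ that are not in $\frac1{m_i}\bZ$. This is harmless, since we only need decay off a neighbourhood of $\frac1m\bZ$, but the constant $1/q^2$ must be adjusted so that a generalised digit change of $mn$ genuinely keeps $\|mn/q^{\ell+j}\|$ bounded below. Otherwise the argument mirrors Lemma~\ref{qgood}.
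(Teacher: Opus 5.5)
\textbf{Gap: you only handle frequencies on the coordinate axes.} For $d=1$ your argument is correct and is essentially the paper's, since your $m_1$ is then exactly the determinant of the difference lattice. For $d\ge 2$, however, you only bound $|\hat\mu(\xi\be_i)|$. The lemma is needed for arbitrary frequency vectors. In Theorem~\ref{AvgDecay2}, and in the convergence and divergence arguments, $|\hat\mu|$ is evaluated at $p^k\bt/q^\ell$ for arbitrary $\bt\in\bZ^d$. There, ``$(q,m)$-good in each variable'' must mean the following: if a single coordinate $|mn_i|$ has many generalised digit changes, then $|\hat\mu(\bn/q^\ell)|\ll e^{-cT}$, whatever the other coordinates are.

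For this you need the full $d$-dimensional factor $g(\by)=\bigl|\sum_{\ba\in\cD}p_\ba e(\ba\cdot\by)\bigr|$ to satisfy $g(\by)=1\Rightarrow m\by\in\bZ^d$. The coordinatewise gcds $m_i$ do not give this. Take $q=3$, $d=2$, $\cD=\{(0,0),(1,2),(2,1)\}$, which is affinely irreducible. Then $m_1=m_2=1$. Yet $\ba\cdot(1/3,1/3)\in\{0,1\}$ for every $\ba\in\cD$, so $g(1/3,1/3)=1$. This happens even though $\|m y_1\|=1/3\ge 1/q^2$ both for $m=1$ and for $m=(q-1)!=2$. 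So your compactness step is false off the axes for your choice of $m$, and your argument gives no control there.

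\textbf{The missing idea is a lattice argument.} We have $g(\by)=1$ if and only if $\by\cdot(\ba-\bb)\in\bZ$ for all $\ba,\bb\in\cD$. Equivalently, $\by\in\Lambda^*$, where $\Lambda$ is the $\bZ$-span of the differences $\ba-\bb$. Affine irreducibility says exactly that $\Lambda$ has full rank. By Cramer's rule, $\Lambda^*\subseteq m^{-1}\bZ^d$ with $m=\det\Lambda$.

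This $m$ can involve primes not dividing $(q-1)!$, even $q$ itself; in the example above, $m=3$. With this $m$, compactness gives $g(\by)\le e^{-c}$ whenever $\|m\by\|\ge q^{-2}$ in the supremum norm. Your digit-change bookkeeping then goes through unchanged for arbitrary $\bn\in\bZ^d$: a generalised digit change of $|mn_i|$ at a position not among the final $\ell$ forces $\|m\bn/q^{\ell+j}\|\ge q^{-2}$ for the corresponding $j$. This is the paper's proof.
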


\begin{proof}
Let $\cD = \{ \bu^{(1)}, \ldots, \bu^{(r)} \}$ be the set of allowed digits.
As $\mu$ is affinely irreducible, the vectors
\begin{equation}
\label{differences}
\bu^{(2)} - \bu^{(1)},
\ldots,
\bu^{(r)} - \bu^{(1)}
\end{equation}
form a set of full rank. Consequently, their $\bZ$-span is a full lattice $\Lam$ of some determinant $m \in \bN$.

Let $\bx \in \bR^n$, and suppose
\[
\bx \cdot \bu^{(i)} \in \bZ
\qquad (1 \le i \le r).
\]
Let $\bv^{(1)}, \ldots, \bv^{(d)}$ form a basis for $\Lam$. These must be $\bZ$-linear combinations of the vectors \eqref{differences}, so
\[
\bx \cdot \bv^{(j)} \in \bZ \qquad (1 \le j \le d).
\]
In particular, the vector $\bx$ lies in the dual lattice $\Lam^*$ which, by Cramer's rule, is contained in the lattice $m^{-1} \bZ^n$. 

In $d$ many dimensions, we have the product formula
\begin{equation*}
\label{GenProd}
|\hat \mu(\bxi)| = \prod_{j=1}^\infty g(\bxi/q^j)
\qquad
(\bxi \in \bR^d),
\end{equation*}
where
\[
g(\by) = \left| \sum_{\bj \in \{ 0, 1, \ldots, q-1 \}^d} p_\bj e(\bj \cdot \by)
\right|.
\]
Thus, $\| m \bx \| = 0$ whenever $g(\bx) = 1$. Now, by continuity and compactness, there exists $c = c(\mu) > 0$ such that
\[
\| m \bx \| \ge 1/q^2
\quad
\Longrightarrow 
\quad
g(\bx) \le e^{-c}.
\]
It follows that $\hat \mu$ is $(q,m)$-good in each variable.
\end{proof}

\subsection{Inputs relating to probability/measure theory}

The following famous results can be found in many places, including \cite{BV2023}.

\begin{lem} [First Borel--Cantelli lemma]
Let $\mu$ be a probability measure, and let $\cE_1, \cE_2, \ldots$ be events such that
\[
\sum_{n=1}^\infty \mu(\cE_n) < \infty.
\]
Then
\[
\mu\left(
\limsup_{n \to \infty} \cE_n \right) = 0.
\]
\end{lem}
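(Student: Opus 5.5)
The statement is the first Borel--Cantelli lemma. I will prove it directly from countable subadditivity together with the tail behaviour of a convergent series.

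First I write the limsup set as a nested intersection of unions,
\[
\limsup_{n \to \infty} \cE_n = \bigcap_{N=1}^\infty \bigcup_{n \ge N} \cE_n .
\]
A point lies in this set if and only if it belongs to infinitely many of the events $\cE_n$.

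Next I fix $N \in \bN$. The limsup set is contained in $\bigcup_{n \ge N} \cE_n$, so monotonicity and countable subadditivity of $\mu$ give
\[
\mu\Bigl(\limsup_{n \to \infty} \cE_n\Bigr) \le \mu\Bigl(\bigcup_{n \ge N} \cE_n\Bigr) \le \sum_{n \ge N} \mu(\cE_n).
\]

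Finally, the series $\sum_{n=1}^\infty \mu(\cE_n)$ converges by hypothesis. Hence its tails $\sum_{n \ge N} \mu(\cE_n)$ tend to $0$ as $N \to \infty$. The left-hand side above does not depend on $N$, so letting $N \to \infty$ shows that it is zero.

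There is no genuine obstacle here. The only point needing care is that the first inequality holds for every $N$ simultaneously, and this is exactly what allows the passage to the limit.
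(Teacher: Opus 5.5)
Your proof is correct and is the standard argument: you bound $\mu(\limsup_{n} \cE_n)$ by the tail sums $\sum_{n \ge N} \mu(\cE_n)$ using countable subadditivity, and then let $N \to \infty$. The paper gives no proof of its own and simply cites the lemma as a well-known result (e.g.\ from Beresnevich--Velani), so there is nothing further to compare; note also that your argument never uses the fact that $\mu$ has total mass $1$.
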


Deducing positive measure from the divergence of the series requires, additionally, some form of independence.

\begin{lem} [Divergence Borel--Cantelli]
\label{DBC}
Let $\mu$ be a probability measure, and let $\cE_1, \cE_2, \ldots$ be events such that
\[
\sum_{n=1}^\infty \mu(\cE_n) = \infty
\]
and
\[
\sum_{n,n' \le N} \mu(\cE_n\cap \cE_{n'}) \ll \Bigl( \sum_{n \le N} \mu(\cE_n) \Bigr)^2.
\]
Then
\[
\mu \left(
\limsup_{n \to \infty} \cE_n \right) > 0.
\]
\end{lem}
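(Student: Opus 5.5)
We intend to use the second moment method, in the form of the Cauchy--Schwarz (Paley--Zygmund) inequality applied to the counting function of the events. For $N \in \bN$, write
\[
S_N = \sum_{n \le N} \one_{\cE_n},
\qquad
M_N = \int S_N \d \mu = \sum_{n \le N} \mu(\cE_n),
\]
so that $M_N \to \infty$ by hypothesis. Expanding the square gives
\[
\int S_N^2 \d \mu = \sum_{n,n' \le N} \mu(\cE_n \cap \cE_{n'}) \le C M_N^2,
\]
where $C$ is the implied constant in the quasi-independence hypothesis. Since $S_N$ takes values in $[0,N]$, every integral here is finite.

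The limsup set is the decreasing intersection over $m$ of the sets $\bigcup_{n > m} \cE_n$. Therefore it suffices to show that $\mu(\bigcup_{n>m} \cE_n) \ge 1/C$ for every fixed $m$. We will then conclude by continuity of measure from above, which is valid because $\mu$ is a probability measure. The bound $1/C > 0$ is uniform in $m$, and this is what makes the final step work.

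Fix $m$ and, for $N > m$, consider the tail count
\[
S_{m,N} = \sum_{m < n \le N} \one_{\cE_n}.
\]
Its mean is $M_N - M_m$. Since $0 \le S_{m,N} \le S_N$ pointwise, its second moment is at most $C M_N^2$. By Cauchy--Schwarz applied to $S_{m,N} = S_{m,N} \one_{\{S_{m,N} > 0\}}$,
\[
(M_N - M_m)^2 = \Bigl( \int S_{m,N} \d\mu \Bigr)^2 \le \mu(S_{m,N} > 0) \int S_{m,N}^2 \d\mu \le C M_N^2 \, \mu\Bigl(\bigcup_{m<n\le N} \cE_n\Bigr).
\]
Rearranging gives
\[
\mu\Bigl(\bigcup_{m<n\le N} \cE_n\Bigr) \ge \frac{(1 - M_m/M_N)^2}{C}.
\]
We now let $N \to \infty$ with $m$ fixed. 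Since $M_m$ is a fixed finite number and $M_N \to \infty$, the right-hand side tends to $1/C$. This gives $\mu(\bigcup_{n>m}\cE_n) \ge 1/C$, and hence $\mu(\limsup_{n} \cE_n) \ge 1/C > 0$.

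There is no serious obstacle here. The only point requiring care is to discard the first $m$ events without losing control. This is handled by bounding the second moment of the tail by that of the full sum, and by using the divergence $M_N \to \infty$ to make the $M_m$ term negligible. In later applications the constant $1/C$ is not optimal, and it is upgraded to full measure via the zero-full law of Lemma~\ref{ZeroFull}.
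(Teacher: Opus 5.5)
Your proof is correct. It is the standard Cauchy--Schwarz (Paley--Zygmund/Chung--Erd\H{o}s) second-moment argument: discarding the first $m$ events via the tail sum $\sum_{m<n\le N}\one_{\cE_n}$ and using $M_N\to\infty$ gives the quantitative bound $\mu(\limsup_{n\to\infty}\cE_n)\ge 1/C$. The paper gives no proof of its own and simply cites the result as well known (e.g.\ \cite{BV2023}); your argument is the standard proof of that cited statement.
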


Then, to upgrade from positive measure to full measure, we leverage the ergodicity of Bernoulli systems.
We eschew the popular tool \cite[Proposition 1]{BDV2006}, as it requires doubling.

As part of our argument, we use the following real-analytic device.

\begin{lem}
\label{lem: divergent reduction} Suppose
\[
\sum_{n=1}^\infty \psi(n) = \infty.
\]
Then there exists a non-increasing function $\tilde \psi: \bN \to [0,1]$ such that
\[
\tilde \psi \le \psi,
\qquad
\tilde \psi = o(\psi),
\qquad
\sum_{n=1}^\infty \tilde \psi(n) = \infty.
\]
\end{lem}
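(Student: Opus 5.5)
We build $\tilde\psi$ block by block, damping $\psi$ by a slowly decaying factor so that divergence survives while $\tilde\psi/\psi\to0$.

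\textbf{Step 1: blocks.} Since $\sum_n \psi(n) = \infty$, we choose integers $1 = N_0 < N_1 < N_2 < \cdots$ inductively so that
\[
S_j := \sum_{N_{j-1} \le n < N_j} \psi(n) \ge 1 \qquad (j \ge 1).
\]
This is possible because every tail of the series diverges.

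\textbf{Step 2: the damping factor.} We set
\[
\tilde\psi(n) = \frac{\psi(n)}{j} \qquad (N_{j-1} \le n < N_j).
\]
The required properties then follow.
\begin{enumerate}[(i)]
\item Since $j \ge 1$, we have $\tilde \psi \le \psi \le 1$, so $\tilde\psi$ takes values in $[0,1]$.
\item Since $j \to \infty$ as $n \to \infty$, we have $\tilde\psi/\psi \le 1/j \to 0$ wherever $\psi(n)>0$, and $\tilde\psi(n)=0$ wherever $\psi(n)=0$. Hence $\tilde\psi = o(\psi)$ in the sense $|\tilde\psi|\le \epsilon\psi$ eventually. If $\psi$ vanishes eventually, the sum converges, so under our hypothesis $\psi>0$ everywhere.
\item Divergence holds because
\[
\sum_n \tilde\psi(n) = \sum_{j\ge1} \frac{S_j}{j} \ge \sum_{j \ge 1} \frac1j = \infty.
\]
\end{enumerate}

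\textbf{Step 3: monotonicity.} Within a block, $\tilde\psi$ is $\psi$ times a constant, so it is non-increasing there. At a block boundary $n = N_j$ we have
\[
\tilde\psi(N_j) = \frac{\psi(N_j)}{j+1} \le \frac{\psi(N_j - 1)}{j} = \tilde\psi(N_j - 1),
\]
using that $\psi$ is non-increasing and $1/(j+1) < 1/j$. Hence $\tilde\psi$ is non-increasing everywhere.

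The argument has no real obstacle. The only point needing care is that the step-function damping is non-increasing in $j$, which is exactly what preserves monotonicity across block boundaries.
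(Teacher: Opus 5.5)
Your proof is correct and follows essentially the paper's route: partition $\bN$ into consecutive blocks and divide $\psi$ by the block index. The only cosmetic difference is that the paper chooses blocks of $\psi$-mass at least $i$, so each damped block contributes at least $1$, whereas you take blocks of mass at least $1$ and rely on divergence of the harmonic series; your explicit checks of monotonicity across block boundaries and of $\psi>0$ fill in details the paper leaves implicit.
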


\begin{proof} By induction, there exists a sequence $K_1 = 1, K_2, \ldots$ of positive integers such that
\[
\sum_{K_i \le n < K_{i+1}} \psi(n) \ge i
\]
for all $i$. Now let
\[
\tilde \psi(n) = i^{-1}
\psi(n)
\qquad (K_i \le n < K_{i+1}).
\]
\end{proof}

Whereas the classical abstract zero--full laws of Cassels \cite{Cas1950} and Gallagher \cite{Gal1961} apply to a single approximating function, our variant requires all non-increasing approximating functions at once.

\begin{lem} [Variant of abstract zero--full law]
\label{ZeroFull}
Let $p \ge 2$ and $q \ge 3$ be coprime integers, and let $\mu$ be a missing-digit measure corresponding to a missing-digit set $\cK \subseteq [0,1]^d$ in base $q$. Suppose that \mbox{$\mu(\cW_p(\psi, d)) > 0$} for all non-increasing functions $\psi: \bN \to [0,1]$ with
\[
\sum_{n=1}^\infty \psi(n)^d = \infty.
\]
Then $\mu(\cW_p(\psi, d)) = 1$ for all such $\psi.$
\end{lem}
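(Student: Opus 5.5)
The plan is to deduce full measure from positive measure by zooming in on a cylinder where the limsup set has almost full relative density, and then blowing that cylinder back up with the map $\times q^n$. The coprimality of $p$ and $q$ is not even needed for this step; what matters is that $\times q$ distorts $\|p^k \cdot\|$ by at most a constant factor. The self-similarity of $\mu$ then turns density near $1$ on a cylinder into measure near $1$ globally.

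\emph{Step 1 (shrinking $\psi$).} Fix a non-increasing $\psi$ with $\sum_n \psi(n)^d = \infty$. Apply Lemma~\ref{lem: divergent reduction} to $\psi^d$, which is non-increasing and $[0,1]$-valued, and take $d$-th roots. This gives a non-increasing $\tilde\psi \le \psi$ with
\[
\tilde\psi = o(\psi), \qquad \sum_n \tilde\psi(n)^d = \infty.
\]
By hypothesis, the set $A = \cW_p(\tilde\psi,d) \cap \cK$ has $\mu(A) > 0$. Since $\tilde\psi = o(\psi)$, for each fixed $n$ we have $q^n\tilde\psi(k) < \psi(k)$ for all large $k$.

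\emph{Step 2 (invariance under $\times q$).} Let $T^n \bx = q^n \bx \bmod \bZ^d$. For $\bx \in \cW_p(\tilde\psi,d)$ and any $k$,
\[
\|p^k T^n \bx\| = \|q^n p^k \bx\| \le q^n \|p^k \bx\|.
\]
Hence $T^n \bx \in \cW_p(q^n\tilde\psi, d)$. Since $\cW_p$ is a limsup set and depends only on the tail of the approximating function, Step 1 gives
\[
T^n\bigl(\cW_p(\tilde\psi,d)\bigr) \subseteq \cW_p(\psi,d)
\]
for every $n$. Boundary and non-unique-coding issues for points of $\cK$ are $\mu$-null, since $\mu$ is non-atomic, and can be ignored.

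\emph{Step 3 (density and self-similarity).} Let $\epsilon > 0$. The level-$n$ cylinders $C = \{\sum_j \ba^{(j)}q^{-j} : \ba^{(1)},\dots,\ba^{(n)} \text{ fixed}\}$ generate the Borel $\sigma$-algebra on the coding space. By the martingale convergence theorem (equivalently, Lebesgue density for the Bernoulli measure), there are $n$ and a level-$n$ cylinder $C$ with $\mu(C) > 0$ and
\[
\mu(A\cap C) \ge (1-\epsilon)\mu(C).
\]
Self-similarity of the missing-digit measure says that the restriction of $\mu$ to $C$, pushed forward by $T^n$ (which acts on $C$ as the affine bijection $\bx \mapsto q^n\bx - \sum_{j\le n}\ba^{(j)}q^{n-j}$ onto $\cK$, modulo null sets) equals $\mu(C)\,\mu$. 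Therefore
\[
\mu(T^n(A\cap C)) = \mu(A\cap C)/\mu(C) \ge 1-\epsilon.
\]
By Step 2, $T^n(A\cap C) \subseteq \cW_p(\psi,d)$. Letting $\epsilon \to 0$ gives $\mu(\cW_p(\psi,d)) = 1$.

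\emph{Main obstacle.} The delicate point is Step 3: making the self-similarity statement precise, and checking that the pushforward of $\mu|_C$ under $T^n$ is exactly a multiple of $\mu$. On the coding space this is just the product structure of the Bernoulli measure, so I would argue there and transfer back via the coding map, which is injective off a $\mu$-null set. Everything else is bookkeeping.
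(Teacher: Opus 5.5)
Your proof is correct. Your Steps 1--2 are exactly the paper's: the same $\tilde\psi = o(\psi)$ from Lemma~\ref{lem: divergent reduction} applied to $\psi^d$, and the same inclusion $q^n\cW_p(\tilde\psi,d) \subseteq \cW_p(\psi,d)$ modulo $\bZ^d$.

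The routes differ only at the last step. The paper forms the forward-invariant set $\tilde\cW = \bigcup_{\ell \ge 0} q^\ell \cW_p(\tilde\psi,d) \subseteq \cW_p(\psi,d)$. It uses the $\times q$-invariance of $\mu$ to see that $\tilde\cW$ and $(\times q)^{-1}\tilde\cW$ differ by a null set, and then cites ergodicity of the Bernoulli scheme to get $\mu(\tilde\cW) = 1$. You instead take a cylinder on which $\cW_p(\tilde\psi,d)$ has relative density at least $1-\epsilon$ and blow it up by $T^n$, using the exact self-similarity $(T^n)_*(\mu|_C) = \mu(C)\mu$. This proves by hand the consequence of the Bernoulli structure that the paper imports as ergodicity.

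What each buys: your version is more self-contained, and since $T^n$ is injective on a cylinder, you never need to check measurability of images under the non-injective map $\times q^\ell$. The paper's version is shorter and uses only that $\mu$ is $\times q$-invariant and ergodic, so it applies verbatim to any such measure; yours uses the full product structure.

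One small point: ``since $\mu$ is non-atomic'' is not the right reason for your null-set claims when $d \ge 2$. Without irreducibility, $\mu$ can even be concentrated on a face of $[0,1]^d$. Your plan to work on the coding space removes the issue. There $\sigma^n$ maps a level-$n$ cylinder bijectively onto the whole space, scaling the Bernoulli measure by the reciprocal of the cylinder's mass. Moreover $\pi(\sigma^n\omega) = q^n\pi(\omega) - \sum_{j \le n}\ba^{(j)}q^{n-j}$ holds exactly, and $\cW_p(\psi,d)$ is $\bZ^d$-periodic.
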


\begin{proof}
Let $\tilde{\psi}$ be as given by Lemma \ref{lem: divergent reduction}, applied to $\psi^d$. We see from the hypothesis that $\mu(\cW_p(\tilde{\psi}, d)) > 0.$ For convenience, we write
\[
\cW(\psi) = \cW_p(\psi, d) \mmod \bZ^d,
\qquad
\cW(\tilde \psi) = \cW_p(\tilde \psi, d) \mmod \bZ^d
\]
for the remainder of this proof.

Now let $\ell \geq 0$ be an integer, and consider the set
\[
q^\ell \cW(\tilde \psi) = 
\{ q^\ell \bx: \bx \in \cW(\tilde\psi)\}.
\]
Given any $\bx \in \cW(\tilde\psi),$ there are infinitely many integers $n$ such that $p^n \bx = \bz^{(n)} + \be^{(n)}$ with $\bz^{(n)} \in \bZ^d$ and $|\be^{(n)}|< \tilde\psi(n)$, whence,
\[
p^n q^\ell \bx = q^\ell \bz^{(n)} + q^\ell \be^{(n)}.
\]
Observe that $|q^\ell \be^{(n)}| < \psi(n)$ for any sufficiently large $n$, so
\[
q^\ell \bx \in \cW(\psi).
\]

Consider the set
\[
\tilde\cW := \bigcup_{\ell \geq 0} q^\ell \cW(\tilde \psi) \subseteq \cW(\psi).
\]
It is clear that $\tilde \cW$ is Borel, and so $\mu(\tilde \cW) \ge \mu(\cW(\tilde \psi)) > 0$.
Moreover, the symmetric difference satisfies
\begin{equation}
\label{inv}
\mu(\tilde \cW \Delta (\times q)^{-1} \tilde \cW) = 0.
\end{equation}
Indeed, we plainly have $\tilde \cW \subseteq (\times q)^{-1} \tilde \cW$. Since $\mu$ is $\times q$-invariant, we see that
\[
\mu(\tilde \cW)=\mu((\times q)^{-1}\tilde \cW),
\]
so we must have \eqref{inv}.

Observe that $(\cK,\times q,\mu)$ can be considered as a Bernoulli scheme via the left shift in the space of one-sided infinite digit sequences. In particular it is ergodic, so $\mu(\tilde\cW)=1$. Thus, we also have 
\[
\mu(\cW_p(\psi, d)) = \mu(\cW(\psi)) = 1.
\]
\end{proof}

\section{Main technical estimate}
\label{tec}

In this section, we establish the following generalisation of Theorem~\ref{AvgDecay}. Note that here the Fourier transform is used. In the case $\ell = 0$, this coincides with the Fourier coefficient, see \eqref{coincide}.

\begin{thm}
\label{AvgDecay2}
Let $p \ge 2$ be an integer, and let $q \ge 3$ be a prime such that $q \nmid p$. Let $f: \bR^d \to [0,1]$ be $(q,m)$-good in each variable, for some $m \in \bN$, and fix $c > 0$ sufficiently small in terms of $p,q,f$. Let $Q \ge 1$, and let $\ell \ge 0$ be an integer, and let $\bt \in \bZ^d$ with
$
1 \le |\bt| \le Q^2$.
Then, for any fixed $\tau \in (0,50)$,
\[
\sum_{\substack{Q < k \le 2Q \\
\log \frac{mp^k|\bt|}{q^\ell}
> Q^{(c/\log\log Q)^{\tau}} }} f(p^k \bt / q^\ell ) \ll_{p,q,m,f}
Q^{-(c/\log\log Q)^{1+\tau}}.
\]
\end{thm}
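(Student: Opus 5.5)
The plan is to bound the sum by Fourier decay of each term, graded by how many generalised digit changes each frequency has. Every index $k$ has few changes in the base-$q$ expansion of $|m p^k t_i|$ only if it \emph{qualifies} in the sense of the introduction, and qualification can be combined with Lemma~\ref{Act2}, Lemma~\ref{Act3} and the Erd\H{o}s--Kahane Lemma~\ref{EK}. Since the bound is not normalised by $1/Q$, it is not enough to show that few $k$ are exceptional. We will instead prove the layered count
\begin{equation*}
\#\{Q<k\le 2Q \text{ admissible}: D(k)\le R\} \ll Q^{-\epsilon} e^{cR/2}\qquad (R\ge 1),\qquad \epsilon=(c/\log\log Q)^{1+\tau}.
\end{equation*}
Here $D(k)$ is the number of generalised digit changes of $|m p^k t_i|$ lying outside its final $\ell$ base-$q$ digits, for a coordinate $i$ with $|t_i|=|\bt|$. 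Since $f$ is $(q,m)$-good in that variable, $f(p^k\bt/q^\ell)\ll e^{-cD(k)}$. Summing the count dyadically in $R$ against $e^{-cR}$ then gives $\sum_k f(p^k\bt/q^\ell)\ll Q^{-\epsilon}$, after shrinking $c$. For $R\ge 3\log Q/c$ the count is trivially at most $Q\le Q^{-\epsilon}e^{cR/2}$. So the whole task is the regime $R\le C\log Q$, where we must show that the count is \emph{tiny}: at most $Q^{-\epsilon}e^{cR/2}$, which for $R$ of order $\epsilon\log Q/c$ or smaller means the set is empty.

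\textbf{Step 1.} The aim is a uniform lower bound $D(k)\ge C_0\,\epsilon\log Q$ for every admissible $k$, for a suitable large constant $C_0$; this empties the small-$R$ layers. By the admissibility condition, the frequency has $N-\ell\ge Q^{\delta}/\log q$ base-$q$ digits above position $\ell$, with $\delta=(c/\log\log Q)^\tau$. First we write $M_k=m p^k t_i$. In base $p$ it has at most $O(\log Q)$ nonzero digits (from $m t_i$), followed by $k$ zeros. We then apply Stewart's Theorem~\ref{Stewart} to $M_k$ with $\tet\asymp C\log r$ and $r\asymp Q$. Each interval $\Tet_j$ containing no base-$p$ change forces a base-$q$ change there. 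Only $O(\log Q)$ of the $\asymp \log Q/\log\log Q$ Stewart intervals can be spoiled, so Stewart alone is too weak, and the final $\ell$ digits may swallow the low intervals anyway. Stewart will therefore be used only to locate \emph{some} change in each window of the top $Q^{\delta}$ digits. The counting itself is done in Step 2.

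\textbf{Step 2 (qualification and Erd\H{o}s--Kahane).} We fix $T\asymp \eta^{-1}\log\log Q$ and blocks of $H_T(p)\asymp q^T$ consecutive $k$, using Lemma~\ref{Occ2}/Lemma~\ref{lma: local occ}. An index $k$ qualifies if $M_k$, within its top $Q^{\delta}$ digits, has at least $\eta Q^{\delta}/T$ runs of $0$'s or of $(q-1)$'s of length $\ge 2H_T(p)\log p/\log q$. Unqualified $k$ already have $D(k)\gg \eta Q^{\delta}/T$, which is far larger than $\log Q$, so they land in the trivial layers. Wlog (by the digit-flip Lemma~\ref{FlipDigits}) the runs of a qualified $k$ are runs of zeros. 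Lemma~\ref{Act2} then says that across a block of shifts $k+\Del$, $0\le \Del<H_T(p)$, the $T$ digits preceding each run cycle through $H_T(p)$ distinct residues. By Lemma~\ref{EK}, only a fraction $(q^T)^{2\eta(1+\log(6/\eta))-1}$ of them have at most $\eta T$ changes. The run structure persists only while the shift is small, which is why we work block by block.

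\textbf{Step 3 (counting).} We union-bound over which runs are deficient. Concatenating the local windows shows that within a block the number of $\Del$ with $D(k+\Del)\le R$ is at most $H_T(p)\cdot q^{-(1-o(1))T\cdot(\#\text{runs}-R/(\eta T))}$. With $\#\text{runs}\ge\eta Q^\delta/T$, this is super-polynomially small in $Q$ whenever $R\le C\log Q$. Summing over the $O(Q/H_T(p))$ blocks then gives the required layered count, hence the theorem.

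The main obstacle is Step 2/3 at the interface with the final $\ell$ digits and the carries from the top part $m t_i$. We must ensure that the windows counted for the Erd\H{o}s--Kahane bound lie above position $\ell$. We must also ensure that multiplication by $p^\Del$ does not destroy the runs, which needs run length at least $2H_T(p)\log p/\log q$ and leaves a nontrivial run. The admissibility threshold $Q^{\delta}$ and the choice $T\asymp\log\log Q$ are tuned exactly so that the number of usable windows beats both the block length $q^T$ and the target $\log Q$. If this balance fails, the fallback is to shrink $\delta$ and $T$ together and absorb the loss into the exponent $(c/\log\log Q)^{1+\tau}$.
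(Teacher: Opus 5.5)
Your proposal has a genuine gap: the Step~2 dichotomy runs the wrong way, and this is fatal. If the top $Q^{\delta}$ digits of $M_k$ contain at least $\eta Q^{\delta}/T$ runs, then $M_k$ already has about that many generalised digit changes. Conversely, every $k$ with $D(k)\le C\log Q$ (precisely the indices you must control) has at most $C\log Q+1$ maximal runs of $0$'s or $(q-1)$'s. Such a $k$ is therefore \emph{unqualified} in your sense, yet $D(k)$ is far below $\eta Q^{\delta}/T$. So the claim ``unqualified $\Rightarrow D(k)\gg \eta Q^{\delta}/T$'' silently assumes that such $k$ do not exist, and Lemmas \ref{Act2} and \ref{EK} are never applied where they are needed.

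Step~3 has an independent flaw. Within a block, all windows are functions of the single shift $\Delta$, which takes only $H_T(p)\asymp q^T$ values. You therefore cannot multiply the per-window densities $(q^T)^{2\eta(1+\log(6/\eta))-1}$ as if the windows were independent and conclude that no shift is bad. The union bound only gives at most $N\,C_\eta(T)$ exceptional shifts per block, where $N$ is the number of windows, and these cannot be excluded.

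Since exceptional shifts survive, the proof needs two ingredients your plan lacks. The first is few blocks. The paper fixes $h=k-K\in(H,2H]$ with $Q^{(c/\log\log Q)^\tau}\le H\le Q$. It uses blocks of length $L=H/(\log Q)^{2A}$ with $A=(\log\log Q)^{100}$, so $q^T\asymp L$, i.e.\ $T\asymp\log H$ rather than $\log\log Q$. There are then only $(\log Q)^{2A}+1\ll H^{c/\log\log Q}$ blocks. Your blocks have length $q^T=H^{o(1)}$, so there are $H^{1-o(1)}$ of them, and even one exceptional shift per block makes the bound useless.

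Lemma \ref{Act2} at this scale needs long runs, each preceded by a generalised change so that $q\nmid y_i$. These come from the paper's qualification: $h$ is qualified if $\gg A$ of the chosen even-indexed Stewart intervals contain $\lfloor H/(\log Q)^A\rfloor$ change-free positions immediately following a generalised change. Unqualified $h$ then have $\Omega(A\log Q)$ changes. Unexceptional shifts have at least $N\eta T$ changes, with $N\asymp A$ and $\eta=(\log\log Q)^{-50}$; since $\tau<50$, this gives $f\ll Q^{-101}$.

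The second ingredient is a per-term bound for the exceptional terms, of which there are at most $(\log Q)^{2A}N(q^T)^{2\eta(1+\log(6/\eta))}\le H^{c/\log\log Q+o(1/\log\log Q)}$. Theorem~\ref{Stewart} gives $f\ll H^{-4c/\log\log Q}$ for each of them. You discarded Stewart on a false premise. The base-$p$ digit changes of $mp^kt_i$ all lie in its top $O(\log Q)$ positions, so only $O(1)$ Stewart intervals are spoiled, not $O(\log Q)$. These intervals are counted from the leading digit, so under the admissibility condition they are untouched by the final $\ell$ digits. In particular, Stewart alone essentially gives your Step~1.
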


Let us now prove this. We may assume that $Q$ is arbitrarily large. 
Let $K = \lfloor k_0 \rfloor$, where $mp^{k_0}|\bt| = q^\ell$. Writing $k - K = h \in \bN$ it suffices, by dyadic pigeonholing, to prove that if
$Q^{(c/ \log \log Q)^\tau} \le H \le Q$ then
\begin{align}\label{eqn: thm31key}
\sum_{H < h \le 2H} f(p^{K+h} \bt/q^\ell) \ll H^{-2c/\log \log Q}.
\end{align}

We choose parameters
\[
A = (\log \log Q)^{100},
\qquad
L = \frac{H}{(\log Q)^{2A}},
\qquad
\eta = (\log \log Q)^{-50}.
\]
With $C$ a large, positive constant and $\tet = C \log Q$, we consider Stewart intervals
\[
\Tet_j = [\tet^{j-1}, \tet^j],
\]
where 
\[
2 \mid j,
\qquad
\tet^{j-1} \ge \frac{H}{(\log Q)^A},
\qquad
\tet^j \le \frac{H \log p}
{\log \max 
\{ p,q \}}.
\]
There are at most $A$ many and at least $\Ome_{p,q,C}(A)$ many such intervals,
which we call \emph{chosen intervals}.

A vector $\bn \in \bZ^d$ has a \emph{generalised digit change} in base $q$ in position $j$ if some $|n_i|$ has a generalised digit change in base $q$ in position $j$. Let $j_1, \ldots, j_R$ be the positions of the generalised digit changes of $mp^{K+h} \bt$ in base $q$. By Theorem \ref{Stewart},
each chosen interval $\Tet_j$ intersects
$\{ j_1, \ldots, j_R \}$, since the only digit changes in base $p$ occur in the first $O_m(\log Q)$ many positions.

\subsection{Qualification stage}

A generalised digit change $j_i$ is \emph{qualified} if there are no generalised digit changes in positions 
\begin{equation}
\label{positions}
j_i + 1, \ldots, 
j_i + \left \lfloor
\frac{H}{(\log Q)^{A}}
\right \rfloor.
\end{equation}
A chosen interval $\Tet_j$ is \emph{qualified} if it contains the positions \eqref{positions} for some qualified $j_i$. Let $c_1$ be a positive constant, small depending on $p,q,C$. We say that $h$ is \emph{qualified} if the base $q$ expansion of $mp^{K+h} \bt$ contains at least $c_1 A$ many qualified intervals.

Suppose $h \in (H,2H]$ is unqualified, where 
\[
Q^{(c/\log \log Q)^\tau} \le H \le Q.
\]
Then there are at least $A/2$ many unqualified intervals. Each unqualified interval contains a digit change in base $q$ within 
any subinterval of length $\lfloor H/(\log Q)^A \rfloor$, and therefore at least
\[
\left \lfloor
\frac{\tet^j - \tet^{j-1}}
{H/(\log Q)^A}
\right \rfloor \ge \tet - 2
\]
many digit changes in base $q$ in total. Moreover, the condition $2 \mid j$ on the chosen intervals ensures that the digit changes obtained in this way are pairwise distinct. Thus, 
there are $\Ome(A \log Q)$ many digit changes in total that are not in the final $\ell$ many positions. As $f$ is $(q,m)$-good in each variable,
\[
f(mp^{K+h} \bt / q^\ell) \ll Q^{-\Ome(A)},
\]
whence,
\[
\sum_{h \text{ unqualified}} f(p^{K+h} \bt / q^\ell) \ll H Q^{-\Ome(A)} \ll Q^{1-(\log\log Q)^{99}} \ll Q^{-c/\log \log Q}.
\]

\subsection{Exceptional stage}

To deal with qualified $h$, we partition the chosen intervals into subintervals of length at most $L$. For each qualified $h\in (H,2H]$, we associate the interval $\cI_h =[h,h+L-1] \cap (H,2H].$ The union of all such intervals covers the set of qualified elements in $(H,2H]$. We may greedily choose a disjoint collection of such intervals covering the set of qualified $h \in (H,2H]$. There are at most $(\log Q)^{2A} + 1$ many such intervals parametrised, say, by $\cH$. 

Let us now fix one such interval $\cI_h$. If $mp^{K+h}\bt$ has a qualified digit change in position $j_i$ in base $q$, then the digits in positions \eqref{positions} are all some fixed element of $\{0,q-1\}^d$. Thus, for some $v \in \{ 0, q-1 \}$ and some $j \in \{ 1, 2, \ldots, d \}$, there are say $N$ many chosen intervals containing a string of $\lfloor H/ (\log Q)^A \rfloor$ many digits $v$ in co-ordinate $j$, where
\[
A \ll N \le A.
\]
We call these \emph{special intervals}. For simplicity, we assume that $v = 0$ here. The case $v = q-1$ is similar via Lemma \ref{Act3}. 

Let $T \in \bN$ be maximal such that
$L \le H_T(p)$. By Lemma \ref{Occ2},
\[
q^T \asymp_q L.
\]
Put 
\[
X_0 = |mp^{K+h}t_j|.
\]
By Lemma \ref{Act2}, if $\Del \in \{ 0, 1, \ldots, L-1 \}$ then the base $q$ expansion of $p^\Del X_0$ has the form
\begin{align*}
* \langle p^\Del y_1 \mmod q^T \rangle 0 \ldots 0 *
\langle p^\Del y_2 \mmod q^T \rangle 0 \ldots 0 \langle p^\Del z \rangle,
\end{align*}
where each string of zeros is non-trivial, and $z$ has at least $\ell$ many digits.
Moreover, each $\langle p^\Del y_i \mmod q^T \rangle$ realises exactly $L$ many possibilities as $\Del$ varies.

Put $\del = 2 \eta (1 + \log (6/\eta))$. Recall that to each special interval we associated some qualified $j_i$ and some $X_0$ such that the digits in positions \eqref{positions} are all zero. We call $\Del$ \emph{exceptional} if, for some special interval, the base $q$ expansion of $p^\Del X_0$ has fewer than $N \eta T$ many digit changes. 

If $\Del$ is unexceptional, then
\[
f(p^{K+h+\Del} \bt / q^\ell) \ll e^{-\Ome(N \eta T)} = e^{-\Ome((\log \log Q)^{50-\tau}\log Q)}
\ll Q^{-101}.
\]
Thus, the unexceptional contribution to
\[
S_h := \sum_{\Delta=0}^{L-1} f(p^{K+h+\Delta}\bt/q^\ell)
\]
is $O(Q^{-100})$.

By Lemma~\ref{EK} and the union bound, the number of exceptional $\Del$ is bounded above by
\begin{align*}
Nq^{\del T} &\le (\log \log Q)^{100} O(H/(\log Q)^{2A})^{2\eta(1+\log(6/\eta))}  \\
&\ll H^{51 (\log \log \log Q)/(\log \log Q)^{50}}.
\end{align*}
By Stewart's theorem (Theorem \ref{Stewart}),
\[
f(p^{K+h+\Del}) \ll e^{-4c \log H/ \log \log Q} = H^{-4c/\log \log Q}.
\]
Thus, the exceptional contribution to $S_h$ is $O(H^{-3c / \log \log Q})$. 

Recall that there are at most $(\log Q)^{2A} + 1$ many intervals $\cI_h$ comprising our disjoint union. Since
\begin{align*}
(\log Q)^{2A} &= H^{2(\log\log Q)^{101}/\log H} \leq H^{2(\log\log Q)^{101}/((c/\log\log Q)^\tau)\log Q)}\\
&\ll H^{c/\log\log Q},
\end{align*}
we conclude that
\begin{align*}
\sum_{h \text{ qualified}} f(p^{K+h}/q^\ell) &\leq \sum_{h \in \cH} S_h \ll (\log Q)^{2A} H^{-3c/\log\log Q} \\&\ll H^{-2c\log\log Q}.
\end{align*}

This establishes \eqref{eqn: thm31key} and completes the proof of Theorem \ref{AvgDecay2}.

\section{Convergence theory}
\label{sec: ETP}

In this section, we use Theorem \ref{AvgDecay2} to prove the convergent part of Theorem~\ref{MainThm}. Observe that
\[
\cW_p(\psi, d) \cap [0,1]^d = \limsup_{k \in \bN} \cA_k,
\]
where
\[
\cA_k = \{ x \in [0,1]: \| p^k x \| < \psi(k) \}.
\]

\subsection{First moment estimate}

Let $Q \in \bN$, and let $\delta_Q > 0$ with
\begin{equation}
\label{delLower}
\delta_Q^{-d} \leq Q \log^2 Q.  
\end{equation}
The key is to bound
\[
\sum_{k=Q+1}^{2Q} \mu(\cA_k).
\]
To achieve this, we consider
\[
N_Q := \sum_{k=Q+1}^{2Q} \int A_k(x) \d \mu(x),
\]
where $A_k$ is a suitable bump function majorising or minorising the indicator function of $\cA_k$.
More precisely, recall the construction of $\ome_\delta$ from \S \ref{prep}. We choose $\delta = \delta_Q/p^k$ and define, for $\bx \in \bR^d$,
\[
A_k(\bx) = \sum_{\ba \in \bZ^d} \ome_{\delta_Q/p^k} \left(\bx -\frac{\ba}{p^k} \right).
\]

By Parseval, we have the Fourier series expansion
\[
\int A_k(x) \d \mu(x) = \sum_{\bxi \in \bZ^d} \hat{A}_k(\bxi)\hat{\mu}(-\bxi).
\]
This sum converges absolutely, owing to the rapid decay of $\hat{A}_k.$ Observe that if $\hat{A}_k(\bxi) \neq 0$ then $\bxi$ is a multiple of $p^k.$ Hence, we may write the sum above as
\[
\sum_{\bt\in\bZ^d}
\hat{A}_k(p^k \bt) \hat{\mu}(-p^k\bt).
\]
We will use the facts that for any $\bt,$
\[
\hat{A}_k(p^k\bt) \ll \delta^d_Q
\]
and, for $|\bt|\geq 1/\delta_Q$ and any $N>0,$
\[
\hat{A}_k(p^k\bt) \ll (\delta_Q |\bt|)^{-N} \delta_Q^d.
\]
We split the sum $N_Q$ as follows:
\[
N_Q = \sum_{k=Q+1}^{2Q} \left(\sum_{|\bt|\leq 1/\delta_Q} \hat{A}_k(p^k \bt) \hat{\mu}(-p^k\bt) + \sum_{|\bt|>1/\delta_Q} \hat{A}_k(p^k \bt) \hat{\mu}(-p^k\bt) \right).
\]
We address the two parts separately and show that, subject to \eqref{delLower}, we have
\begin{align}
\label{eqn: ETP bound}
N_Q = \delta^d_Q (c_\ome Q + o(Q))
\end{align}
for some constant $c_\ome > 0.$

\bigskip
\textbf{Case: $|\bt|\leq 1/\delta_Q$.} Observe that for each $k$, there is a contribution of $c_\ome Q \delta^d_Q$ coming from the $\bt = \bzero$ term, 
where
\[
c_\ome = \| \ome \|_1 > 0
\]
is constant.
Therefore
\[
\sum_{|\bt|\leq 1/\delta_Q} \hat{A}_k(p^k \bt) \hat{\mu}(-p^k\bt) = \delta^d_Q \left(c_\ome Q + O(E_Q) \right),
\]
where
\[
E_Q =  \sum_{0<|\bt|\leq 1/\delta_Q} \
\sum_{k=Q+1}^{2Q} 
|\hat{\mu}(p^k \bt)|.
\]
Using \eqref{delLower}, we may apply Theorem \ref{AvgDecay2} with $\ell = 0$ and $\tau=1$, together with Lemma \ref{decent} to see that for some constant $c>0$,
\begin{align}
\label{EQ}
E_Q &\ll \del_Q^{-d} Q^{-(c/\log\log Q)^2} \ll Q (\log Q)^2 Q^{-(c/\log\log Q)^2}
\\ \notag &= o(Q).
\end{align}

\bigskip
\textbf{Case: $|\bt| > 1/\delta_Q$.} 
Given the rapid decay of $\hat{A}_k$, the large Fourier coefficients do not contribute significantly to $N_Q$. This is an instance of the uncertainty principle. To make the argument precise, we split the sum over $\bt$ further:
\[
\sum_{u \geq 1} \
\sum_{2^u \geq |\bt| \delta_Q > 2^{u - 1 }} \
\sum_{k=Q+1}^{2Q}
\hat{A}_k(p^k \bt) \hat{\mu}(-p^k\bt).
\]
By Theorem \ref{AvgDecay2}, if $2^u/\del_Q \le Q^2$ then
\begin{align*}
\sum_{2^u \geq |\bt| \delta_Q > 2^{u - 1 }} \
\sum_{k=Q+1}^{2Q}
\hat{A}_k(p^k \bt) \hat{\mu}(-p^k\bt)
&\ll \left(\frac{2^u}{\delta_Q} \right)^d 2^{-N u} \delta^d_Q  Q^{-(c/\log\log Q)^2} \\
&= (2^{d-N})^u Q^{-(c / \log \log Q)^2}.
\end{align*}
Choosing $N = d + 2$, and recalling \eqref{delLower}, we find that
\[
\sum_{2^u/\del_Q \le Q^2} \
\sum_{2^u \geq |\bt| \delta_Q > 2^{u - 1 }} \
\sum_{k=Q+1}^{2Q}
\hat{A}_k(p^k \bt) \hat{\mu}(-p^k\bt) = o(Q\del_Q^d).
\]

For those $u$ with $2^u > \delta_Q \geq Q^2$, we note that
\begin{align*}
\sum_{2^u \geq |\bt| \delta_Q >2^{u-1}} \
\sum_{k=Q+1}^{2Q} \hat{A}_k(p^k \bt) \hat{\mu}(-p^k\bt) &\ll 2^{-Nu} \delta^d_Q\sum_{k=Q+1}^{2Q} \
\sum_{|\bt|\leq 2^u / \delta_Q}1 \\ &\ll_d Q 2^{-N u + d u}.
\end{align*}
Choosing $N = d + 2$ ensures, by \eqref{delLower}, that 
\[
Q \sum_{2^u > Q^2\delta_Q}
2^{-N u + d u} = o(Q \del_Q^d).
\]
Collecting the estimates above yields
\[
\sum_{u \geq 1} \
\sum_{2^\ell \geq |\bt| \delta_Q > 2^{\ell -1}} \ \sum_{k=Q}^{2Q}  \hat{A}_k(p^k \bt) \hat{\mu}(-p^k\bt) =
o(Q\delta_Q^d).
\]

\bigskip

We have handled both parts of $N_Q$, confirming \eqref{eqn: ETP bound}.

\subsection{Proof of Theorem \ref{MainThm},  convergent part}

Given $\psi$ monotonic with
\[
\sum_{k=1}^\infty \psi(k)^d < \infty,
\]
we may inflate $\psi$ on dyadic intervals. More precisely, let $\ell \in \bN$ and consider $2^\ell < k \le 2^{\ell+1}.$ Denote $Q=2^\ell$ and define
\[
\tilde{\psi}(k) = \max \left \{ \psi(k), \frac{1}{Q^{1/d} \log^{2/d} Q} \right\}.
\]
For $k \in (Q,2Q]$,
\[
\frac{1}{Q^{1/d} \log^{2/d} Q}\leq \tilde \psi(k) \leq \max \left \{ \psi(Q),\frac{1}{Q^{1/d} \log^{2/d} Q} \right\} =: \delta_Q.
\]
Crucially,
\[
\sum_{\ell \geq 1} 2^\ell\delta^d_{2^\ell} \ll \sum_{k\geq 1} \psi(k)^d + \sum_{k\geq 1} \frac{1}{Q\log^2 Q}< \infty. 
\]
Now by \eqref{eqn: ETP bound},
\[
\sum_{k\geq 3} \mu(\cA_k) \leq \sum_{k \geq 3} \mu(\tilde\cA_k) = \sum_{\ell \geq 1} \sum_{k=2^\ell + 1}^{2^{\ell+1}} \mu(\tilde\cA_k) \ll_d \sum_{\ell \geq 1} 2^{\ell} \delta_{2^\ell}^d < \infty.
\]
An application of the first Borel--Cantelli lemma completes the proof.

\section{Divergence theory}

In this section, we establish the divergent part of Theorem \ref{MainThm}. We will show, assuming
\begin{equation}
\label{Div1}
\sum_{k=1}^\infty \psi(k)^d = \infty,
\end{equation}
that $\mu(\cW_p(\psi,d))>0.$ The desired conclusion will then follow from Lemma \ref{ZeroFull}.

For convenience, we will assume that each $\cA_k$ is a union of $p$-boxes. A $p$-box is an abbreviation for a $(p,k,h)$-box, which we define as an axis-aligned rectangle in $\bR^d$ with the following properties:
\begin{itemize}
\item
The centres are elements of $p^{-k} \bZ^d$. 
\item 
The side lengths are $1/p^{k+h}$.
\end{itemize}
Here $h \in \bN$ will have the additional property --- not used to define $p$-boxes --- that
\[
p^{-h}/2 = \psi(k) \ge (k\log^2 k)^{-1/d}.
\]
We will also take $\psi$ to be constant on dyadic ranges so that, in the analysis, the parameter $h$ will not depend on $k$.
Later, we will see why these assumptions are permissible.

\bigskip

By \eqref{eqn: ETP bound} with $\del_Q = \psi(Q)$,
\[
\sum_{Q < k \le 2Q} \mu(\cA_k) \asymp Q \psi(Q)^d.
\]
Using \eqref{Div1}, we may now conclude that
\begin{equation}
\label{DivReq}
\sum_{k \ge 1} \mu(\cA_k) = \infty.
\end{equation}

\bigskip

Our task is to establish quasi-independence on average, specifically,
\begin{equation}
\label{QIA}
\sum_{k, k' \le T} \mu(\cA_{k} \cap \cA_{k'}) \ll \sum_{k,k'\le T} \mu(\cA_{k}) \mu(\cA_{k'}).
\end{equation}
Our approach is motivated by the \emph{moment transference principles} framework from \cite{CY1}. The present setting is more challenging as we require two different methods to 
estimate the mean overlap: one based in physical space, and one based in frequency space.
We will  compare $\sum_{k,k'}\mu(\cA_k\cap \cA_{k'})$ to $\sum_{k,k'} \lam(\cA_k\cap \cA_{k'})$, showing that they are asymptotically close under mild conditions. Invoking results from \cite{PVZZ2022,TZ} to the measure $\lam$ will then deliver the claimed estimate for $\mu$. (Those results do not apply directly to $\mu$ because $\mu$ lacks pointwise Fourier decay.)

\subsection{Fourier-analytic method} 

As in \S \ref{sec: ETP}, let $A_k$ be a suitable bump function majorising the indicator function of $\cA_k$. By expanding $A_k$ and $A_{k'}$ into Fourier series, we find that
\[
\int A_k(x) A_{k'}(x) \d \mu(x) =  \sum_{\bt, \bt' \in \bZ^d} \hat A_k(p^k \bt) \hat A_{k'} (p^{k'} \bt') \hat \mu(-p^k\bt - p^{k'} \bt')
\]
and
\begin{align*}
\int A_k(x) A_{k'}(x) \d \lam(x) &= \sum_{\bt, \bt' \in \bZ^d} \hat A_k(p^k \bt) \hat A_{k'} (p^{k'} \bt') \hat \lam(-p^k\bt - p^{k'} \bt') \\
&= \sum_{p^k \bt + p^{k'} \bt' = \bzero} \hat A_k(p^k \bt) \hat A_{k'} (p^{k'} \bt').
\end{align*}
In particular,
\begin{align*}
V(k,k') &:= \int A_k(x) A_{k'}(x) \d \mu(x) -
\int A_k(x) A_{k'}(x) \d \lam(x) \\ &=
\sum_{p^k \bt + p^{k'} \bt' \ne \bzero} \hat A_k(p^k \bt) \hat A_{k'} (p^{k'} \bt') \hat \mu(-p^k\bt - p^{k'} \bt').
\end{align*}

It is convenient to introduce the bookkeeping factor $2^{-{u_\bt}N}$, where 
$u_\bt = 0$ if
\mbox{$|\bt| \le p^h$,} and otherwise $u_\bt$ is the unique positive integer satisfying 
\[
2^{u_\bt - 1} < p^{-h} |\bt| \le 2^{u_\bt}.
\]
Here $N$ is an arbitrary constant, and we define $u_{\bt'}$ analogously.
We obtain
\begin{equation}
\label{eqn: IntersectionFourier}
\frac{V(k,k')}
{\lam(\cA_k) \lam(\cA_{k'})} \ll 1 + \sum_{p^k \bt + p^{k'} \bt' \ne \bzero} 2^{-N(u_{\bt}+u_{\bt'})} |\hat{\mu}(p^k \bt+p^{k'}\bt')|.
\end{equation}

This bound is useful when $|k-k'|$ is not too large. We will consider the sum over $k$ and $k'$. When $k$ and $k'$ are close, this double-sum is essentially a single sum, to which we can apply Theorem \ref{AvgDecay2}. Let us now elaborate.

Let $Q \ge 100$ and
\[
\Delta = \frac{Q^{(c/\log\log Q)^2/2}}{\log Q}.
\]
We claim that
\begin{align}
\notag
&\sum_{Q < k \le 2Q} \
\sum_{k \le k'\leq k + \Delta} \ \sum_{p^k \bt + p^{k'} \bt' \ne \bzero} 2^{-N(u_{\bt}+u_{\bt'})}
|\hat{\mu}(p^k\bt+p^{k'} \bt')| \\
\label{claim: Delta bounded sum}
&
\ll Q^{-(c/\log\log Q)^2/2}p^{d(h+h')}.
\end{align}

\begin{proof}[Proof of claim]
The left hand side equals
\[
\sum_{0 \le \delta\leq \Delta} \ 
\sum_{\bt + p^{\delta} \bt' \ne \bzero} 2^{-N(u_{\bt} + u_{\bt'})} \sum_{Q < k \le 2Q} |\hat{\mu}(p^k(\bt+p^{\delta}\bt'))|.
\]
We split the sum  according to whether $\max 
\{ |\bt|, |\bt'| \} \geq Q^{100}$. 

First, suppose $\max \{ |\bt|,|\bt'|\}\leq Q^{3}$. Depending on the size of $\del$, the $p$-expansion of $mp^k(\bt+p^\delta\bt')$ is either a block of $O(\log Q)$ many digits followed by $k$ many zeros, or two separated blocks of $O(\log Q)$ many digits followed by $k$ many zeros. Hence, at most two Stewart intervals are affected by the argument in the proof of Theorem \ref{AvgDecay2} where the qualification argument was used. Thus, the conclusion of Theorem \ref{AvgDecay2} holds  with $\ell=0$ and $\tau=1$ therein. Consequently,
\begin{align*}
&\sum_{0\leq \delta \leq \Delta} \ \sum_{\substack{
\max \{ |\bt|, |\bt'| \} \leq Q^{3} \\ \bt+p^{\delta} \bt' \ne \bzero
}}
2^{-N(u_{\bt}+u_{\bt'})} \ \sum_{Q < k \le 2Q} 
|\hat{\mu}(p^k(\bt + p^{\delta} \bt'))| \\
&\ll Q^{-(c/\log\log Q)^2/2} \sum_{u,u'\geq 0} 2^{-N(u+u')} \sum_{\substack{
\bt,\bt': \\(u_\bt,u_{\bt'}) =(u,u')}} 1 \\
&\ll
Q^{-(c/\log\log Q)^2/2} \sum_{u,u'\geq 0} 2^{(d-N)(u+u')}
p^{d(h+h')}.
\end{align*}

In the case $\max \{ |\bt|, |\bt'|\} \geq Q^{3}$, the trivial bound $\|\hat{\mu}\|_\infty \leq 1$ yields
\begin{align*}
&\sum_{Q < k, k' \le 2Q} \ \sum_{\max \{ |\bt|, |\bt'|\} \geq Q^{3}} 2^{-N(u_{\bt} + u_{\bt'})}|
\hat{\mu}(p^k\bt+p^{k'} \bt')| \\
&\leq Q^2 \sum_{\substack{u,u'\geq 0 \\
\max \{ p^h 2^{u}, p^{h'} 2^{u'} \} \ge Q^{3}}} \ \sum_{\substack{\bt,\bt': \\ (u_\bt, u_{\bt'}) = (u,u')}} 2^{-N(u+u')} \\
&\ll Q^2 \sum_{\substack{u,u'\geq 0 \\
\max \{ p^h 2^{u}, p^{h'} 2^{u'} \} \ge Q^{3}}} 2^{(d-N)(u+u')} p^{d(h+h')}.
\end{align*}
Choosing $N = 2(d+1)$ confirms the claim.
\end{proof}

By \eqref{eqn: IntersectionFourier} and
\eqref{claim: Delta bounded sum},
\[
\sum_{\substack{Q < k, k' \le 2Q \\ |k-k'| \le \Del}} \frac{V(k,k')}{\lam(\cA_k) \lam(\cA_{k'})} \ll Q^{-(c/\log \log Q)^2/2} p^{d(h+h')} \ll Q^2.
\]
In particular,
\begin{equation}
\label{CloseBound}
\sum_{\substack{Q < k, k' \le 2Q \\ |k-k'| \le \Del}} \mu(\cA_k) \mu(\cA_{k'}) \ll  \sum_{Q < k,k' \le 2Q} \lam(\cA_k \cap \cA_{k'}) 
+ Q^2 \lam(\cA_{Q+1})^2.
\end{equation}

\subsection{Localisation method} 
We now come to the case in which $k'-k$ exceeds $\Del$. Let $\cI$ be a $p$-box composing $\cA_k$ and intersecting the support of $\mu$, so that $\cI$ contributes to $\mu(\cA_k).$ 
For convenience, we perform a rescaling to make $\cI$ the unit box centred at the origin. This leads to a rescaling of $\mu$.  
We begin with a short description of this rescaling, before going into more detail. Let $\ell \in \bN$ be such that 
\[
1/q \le \gam := q^{-\ell} p^{k+h} < 1.
\]
Before rescaling, the support of $\mu$ is contained in a union of $|\cD|^\ell$ many hypercubes of side length $q^{-\ell}$. This decomposes $\cI$ into branches. Each branch has scale $q^{-\ell}$ before rescaling, and scale $\gamma \asymp 1$ afterwards. Consequently, there are at most $O_\mu(1)$ many rescaled branches of $\mu$ in the unit hypercube.

Concretely, let
\[
\cI = \left[ \frac{a_1 - p^{-h}/2}{p^k}, \frac{a_1 + p^{-h}/2}{p^k} \right] \times \cdots \times
\left[ \frac{a_d - p^{-h}/2} {p^k}, \frac{a_d + p^{-h}/2}{p^k} \right]
\]
and
$
S(\bx) = p^{k+h} \bx - p^h \ba + (1/2, \ldots, 1/2),
$
so that $S(\cI) = [0,1]^d$.
The branches of $\cI$ have the form $\cI \cap \cJ$ with
\[
\cJ = \left[ \frac{b_1}{q^\ell}, \frac{b_1 + 1}{q^\ell} \right] \times \cdots \times
\left[ \frac{b_d}{q^\ell}, \frac{b_d + 1}{q^\ell} \right],
\]
where the digits of $\bb$ in base $q$ lie in $\cD$. 
There are $O_\mu(1)$ many branches $\cJ$ to consider, namely those for which 
$
\mu(\cI \cap \cJ) > 0.
$
Let us fix one of them and, for Borel $\cA$, define
\[
\mu_\cJ(\cA) = 
\frac{\mu(\cA \cap J)}{\mu(\cJ)}
\]
and
\[
\nu(\cA) = 
\mu_\cJ(S^{-1}\cA) =
\frac{\mu(S^{-1} \cA \cap \cJ)}{\mu(\cJ)}.
\]

Next, consider
\[
\cA_{k'} = \bigsqcup_{\ba'}
\left[ \frac{a'_1 - p^{-h'}/2}{p^{k'}}, \frac{a'_1 + p^{-h'}/2}{p^{k'}} \right] \times \cdots \times
\left[ \frac{a'_d - p^{-h'}/2}{p^{k'}}, \frac{a'_d + p^{-h'}/2}{p^{k'}} \right].
\]
We wish to bound
\[
\mu_\cJ(\cA_{k'} \cap \cI)
= \nu(S(\cA_{k'}) \cap [0,1]^d) \leq \nu(S(\cA_{k'})).
\]
Note that
\begin{align*}
S(\cA_{k'}) &=
\bigsqcup_{\bc}
\left[ \frac{c_1 - p^{-h'}/2}{p^{k'-k-h}}, \frac{c_1 + p^{-h'}/2}{p^{k'-k-h}} \right] \times \cdots \times
\left[ \frac{c_d - p^{-h'}/2}{p^{k'-k-h}}, \frac{c_d + p^{-h'}/2}{p^{k'-k-h}} \right] \\ &\qquad + (1/2, \ldots, 1/2)
\end{align*}
is a translation of a disjoint union of $p$-boxes. 
We will see, using the self-similarity of $\mu$, that $\nu$ is a rescaled and translated copy of $\mu$. Our analysis of $\nu(S(\cA_{k'}))$ will proceed along similar lines to \S \ref{sec: ETP}.

To enact the Fourier analysis, we replace the indicator function of $S(\cA_{k'}) \cap [0,1]^d$
by a smooth majorant, which we denote by $f$. This is $p^{k+h-k'}$-periodic and for any fixed $N>0,$
\[
\hat f(\bxi) \ll \lam(S(\cA_{k'}) \cap [0,1]^d) (1 + p^{k+h-k'-h'} |\bxi|)^{-N},
\]
whence 
\begin{align*}
\nu(S(\cA_{k'}) \cap [0,1]^d) &\le \nu(f)  \\ &= \lam(f) \left(
1 + \sum_{\bt \ne \bzero} \hat{f}(p^{k'-k-h}\bt)\hat \nu(
-p^{k'-k-h} \bt) \right).
\end{align*}

In the calculation above, we have written $\hat \nu$ for the Fourier transform of $\nu$. However, since $\gam < 1$, the measure $\nu$ is supported in a unit box. Thus, by \eqref{coincide}, its $\bZ^d$-periodic extension has Fourier coefficient $\hat \nu(\bxi)$ at any $\bxi \in \bZ^d$.

Later, we will average over $k'$, and the rapid decay of $\hat f$ will enable us to essentially reduce consideration to $|\bt| \ll p^{h'}$.  Let us first examine $\nu$ more closely. 

Let $T(\bx) = q^{-\ell} (\bx + \bb)$,
so that
$T([0,1]^d) = \cJ$. As $\mu$ is self-similar~\cite{BP},
\[
\mu_\cJ(\cA) 
= \mu(T^{-1} (\cA \cap \cJ)).
\]
Consequently,
\[
\nu(\cA) = \mu_\cJ(S^{-1} \cA) = \mu(T^{-1} S^{-1} (\cA \cap \cJ)).
\]
Writing $T^{-1} S^{-1} \bx = \gam^{-1} \bx + \bs$,
for some translation $\bs \in \bR^d$, we arrive at the following equality of Fourier transforms:
\begin{align*}
|\hat \nu(\bxi)| &=
\left| \int_{S(\cJ)} e(-\bxi \cdot \bx) \d \nu(\bx) \right| \\ &= \left| \int_{S(\cJ)}
e(-\bxi \cdot \bx) 
\d \mu(\gam^{-1} \bx + \bs) \right| \\
&=  \left| \int_{[0,1]^d} e(-\gam \bxi \cdot \by) \d \mu(\by) \right| 
= |\hat{\mu}(\gamma\bxi)|.
\end{align*}

Let us now collect the contributions from all of the pieces $\cJ$. We have
\begin{align*}
\mu(\cA_k \cap \cA_{k'})
&= \sum_\cI \mu(\cA_{k'} \cap \cI)
= \sum_{\cI,\cJ} \mu(\cA_{k'} \cap \cI \cap \cJ) \\
&= \sum_{\cI, \cJ} \mu(\cJ) \mu_\cJ(\cA_{k'} \cap \cI) 
= \sum_{\cI, \cJ} \mu(\cJ) \nu_{\cI, \cJ}(S(\cA_{k'}) \cap [0,1]^d)
\end{align*}
and, for each $\cI$, there are $O_\mu(1)$ many $\cJ$. We write $\nu = \nu_{\cI, \cJ}$, and study 
\[
\hat \nu(\bxi)
= \hat \nu(p^{k'-(k+h)}\bt) = \hat \mu(q^{-\ell} p^{k'}\bt).
\]
Notice that the $q$-expansion of $q^{-\ell} p^{k'}\bt$ is the same as that of $p^{k'}\bt$, but shifted by $\ell$ many positions (with a radix point --- the analogue of a decimal point). This motivates the use of Theorem \ref{AvgDecay2}.

Let $N$ be a positive constant, to be specified shortly.
For any $k \in \bN$,
\begin{align*}
&\sum_{\substack{Q < k' \le 2Q \\
k'-k\geq \Delta}} \mu(\cA_{k}\cap \cA_{k'}) \ll \sum_{k'}
\sum_{\cI, \cJ} \mu(\cJ) \nu_{\cI, \cJ}(S(\cA_{k'}))\\
&\ll \mu(\cA_k) \sum_{k'} \lambda(\cA_{k'}) \left( 1 + 
\sum_{u=0}^\infty
2^{-u N}
\sum_{2^{u-1} <  p^{-h'} |\bt| \le 2^u} 
|\hat{\mu} (q^{-\ell} p^{k'}\bt)|
\right).
\end{align*}
Since $\Delta \log p - \log q > Q^{(c/\log\log Q)^3}$ for large $Q$, an application of Theorem \ref{AvgDecay2} with $\tau=3$ furnishes
\[
2^{-uN}
\sum_{2^{u-1} <  p^{-h'} |\bt| \le 2^u} \ \sum_{\substack{Q < k' \le 2Q \\
k'-k\geq \Delta}}
|\hat{\mu} (q^{-\ell} p^{k'}\bt)|
\ll 2^{(d-N)u} p^{dh'} Q^{-(c/\log \log Q)^4}.
\]
Choosing $N = 2(d+1)$, we thus have
\[
\sum_{u=0}^\infty
2^{-uN}
\sum_{2^{u-1} <  p^{-h'} |\bt| \le 2^u} \ \sum_{\substack{Q < k' \le 2Q \\
k'-k\geq \Delta}}
|\hat{\mu} (q^{-\ell} p^{k'}\bt)|
\ll Q (\log Q)^2 Q^{-(c/\log \log Q)^4}.
\]
The upshot is that
\begin{equation}
\label{LargeDifference}
\sum_{\substack{Q < k' \le 2Q \\
k'-k \geq \Delta}} \mu(\cA_{k}\cap \cA_{k'}) \ll Q \mu(\cA_k) \lam(\cA_{Q + 1}).
\end{equation}

\subsection{Quasi-independence on average: reduction to Lebesgue case} \label{sec: reduce to Leb}

We bound, for two powers of four $Q'\geq Q>0$,
\[
\sum_{Q < k, k' \le 2Q}
\mu(\cA_{k} \cap \cA_{k'}).
\]

\bigskip
\textbf{Case: $Q=Q'$.}
By \eqref{CloseBound} and \eqref{LargeDifference},
\begin{align*}
&\sum_{Q < k, k' \le 2Q} \mu(\cA_{k} \cap \cA_{k'}) \\
&\ll \sum_{Q < k, k' \le 2Q} \lam(\cA_{k}\cap \cA_{k'}) +  Q^2 \lam(\cA_{Q+1})^2 + \sum_{Q < k \le 2Q} \mu(\cA_k) Q \lam(\cA_{Q+1}).
\end{align*}
We now use \eqref{eqn: ETP bound} to deduce that
\begin{equation*}
\sum_{Q < k,k' \le 2Q} \mu(\cA_{k} \cap \cA_{k'}) \ll \sum_{Q < k,k' \le 2Q} \lam(\cA_{k} \cap \cA_{k'}) + Q^2 \mu(\cA_{Q+1})^2.
\end{equation*}

\bigskip
\textbf{Case: $4Q \leq Q'$.}
Note that if
\[
Q\leq k\leq 2Q,
\qquad
Q' \leq k' \leq 2Q',
\]
then $k'-k \ge Q'/2$. Thus, by \eqref{LargeDifference} and \eqref{eqn: ETP bound}, 
\begin{align}
\notag
&\sum_{Q < k \le 2Q} \
\sum_{Q' < k' \le 2Q'} \mu(\cA_{k}\cap \cA_{k'}) \\ 
\label{LebRed}
&\ll
\sum_{Q < k \le 2Q} \
\sum_{Q' < k' \le 2Q'} \lam(\cA_{k} \cap \cA_{k'}) + QQ' \mu(\cA_{Q+1}) \mu(\cA_{Q'+1}).
\end{align}

Observe that we have \eqref{LebRed} in both cases.

\subsection{Quasi-independence on average: Lebesgue case}
\label{sec: Leb}

The Lebesgue second moment analysis was completed for measures with mild Fourier decay, including Lebesgue measure, by Pollington--Velani--Zafeiropoulos--Zorin \cite{PVZZ2022} in one dimension, and by Tan--Zhou \cite{TZ} in higher dimensions.
Let $T$ be a large enough to ensure that $S_T \ge 2$, where
\[
S_T = \sum_{k \le T} \lam(\cA_k).
\]
By
\cite[Proposition 2]{PVZZ2022} in the case $d=1$, and by \cite[Lemma 3.9]{TZ} in the case $d=2$, we have
\[
\sum_{k, k' \le T} \lam(\cA_{k} \cap \cA_{k'}) \ll S_T^2.
\]
We use \eqref{eqn: ETP bound} to replace $\lam(\cA_k)$ in by $\mu(\cA_k)$ in the sum $S_T$, delivering
\begin{equation}
\label{LebQIA}
\sum_{k, k' \le T} \lam(\cA_{k} \cap \cA_{k'}) \ll \left( \sum_{k \le T} \mu(\cA_k) \right)^2.
\end{equation}

\subsection{Quasi-independence on average: conclusion}

We make the following assumptions:

\begin{enumerate}
\item $\cA_k$ is empty when $2^u < k \le 2^{u+1}$ and $u$ is odd.
\item $\cA_k = \cA_{Q + 1}$ if $Q$ is a power of four and $Q < k \le 2Q$.
\item $\sum_{k=1}^\infty \lam(\cA_k) = \infty$.
\end{enumerate}
Now, by \eqref{LebRed} and \eqref{LebQIA},
we have \eqref{QIA}.

\subsection{Proof of Theorem \ref{MainThm}, divergent part}

Let $\psi: \bN \to [0,1]$ be a non-increasing function such that 
\[
\sum_{k \ge 1}
\psi(k)^d =\infty.
\]
We first enlarge $\psi$, in the same way as for the convergent case, defining
\[
\psi^*(k) = \max \left \{ \psi(k), \frac{1}{k^{1/d} \log^{2/d} k}\right\}.
\]
Using the convergent part of Theorem \ref{MainThm}, which we have already established, we note that
$\mu(W_p(\psi^*, d)) = \mu(W_p(\psi, d))$. 

Next, we modify each value of $\psi^*$ to ensure that it is $p$-rational. This will possibly shrink each value of $\psi$ by a bounded factor. 

Finally, we use the approximation function
\[
\psi^\dag(k) = 
\begin{cases}
\psi^*(Q + 1), &\text{if } Q \text{ is a power of 4 and }
Q < k \le 2Q \\
0, &\text{otherwise}.
\end{cases}
\]
Having changed some of the values to zero, the approximation function is no longer monotonic. This is not a problem, because our analysis runs over dyadic ranges, and the cited results from \cite{PVZZ2022, TZ} do not assume monotonicity. In view of \eqref{DivReq} and \eqref{QIA}, the divergence Borel--Cantelli lemma (Lemma \ref{DBC})
furnishes
$\mu(W_p(\psi^\dag,d)) > 0$. In particular,
\[
\mu(W_p(\psi,d)) =
\mu(W_p(\psi^*,d)) \geq \mu(W_p(\psi^\dag,d)) > 0.
\]
Finally, by Lemma \ref{ZeroFull}, we have $\mu(W_p(\psi,d)) = 1$.

\section{Quantitative normality}

This section will culminate in a proof of Theorem \ref{thm: QN_intro}. We will, \emph{a fortiori}, establish a more general result. Our key input is Theorem \ref{AvgDecay2} and the idea of its proof. The rest of the argument is quite standard in the field of ergodic theory, but we provide details for completeness. We prove the following result covering Theorem \ref{thm: QN_intro}.

\begin{thm}
\label{thm: QN}
Let $f\in C(\bT)$ with $$\hat{f}(\xi) \ll_{f,\sig} |\xi|^{-\sig}$$ for some $\sig>1$, e.g. $f\in C^2(\bT)$.
Let $\mu$ be a missing-digit measure with prime base $q$, and let $p \ge 2$ be an integer such that $q \nmid p$. Then for any $\delta>0$, for $\mu$-a.e $x$,
\[
\left| \frac{1}{T} \sum_{k=0}^{T-1}f(p^k x) -\int_{\bT} f(x) \d x \right| \ll_{f, \del} T^{\del - 1/2}
\qquad (T \ge 1).
\]
\end{thm}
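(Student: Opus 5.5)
We follow the standard Davenport--Erd\H{o}s--LeVeque route: bound a mean square on dyadic blocks, pass to almost-sure control along a geometric subsequence via Borel--Cantelli, and interpolate to all $T$.

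Without loss of generality $\int_\bT f = 0$, i.e. $\hat f(0)=0$. For integers $0 \le M$ and $T \ge 1$ put
\[
S_{M,T}(x) = \sum_{k=M}^{M+T-1} f(p^k x).
\]
The Fourier expansion converges absolutely because $\sig>1$, and gives
\[
\int |S_{M,T}|^2 \d\mu = \sum_{\xi,\xi' \ne 0} \hat f(\xi)\overline{\hat f(\xi')} \sum_{k,k'} \hat\mu(p^{k'}\xi' - p^k \xi).
\]
Split into the diagonal part $p^k\xi = p^{k'}\xi'$ and the off-diagonal part. The diagonal part is $\ll_f T$: for a fixed pair $(k,\xi)$, the matching pairs $(k',\xi')$ satisfy $\xi' = p^{k-k'}\xi$, and the decay $|\hat f(\xi)| \ll |\xi|^{-\sig}$ makes the sum over $k'$ geometric.

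For the off-diagonal part, fix $k \le k'$ and write the frequency as $p^k(p^{\Delta}\xi' - \xi)$ with $\Delta = k'-k$. Truncate $|\xi|,|\xi'| \le T^{A}$ for a large constant $A$. The tail costs $O(T^2 \cdot T^{-A(\sig-1)})$, which is fine.

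Inside the truncation, proceed as in the claim \eqref{claim: Delta bounded sum}.
\begin{itemize}
\item For $\Delta \le \Delta_0 := T^{\eta}$ with $\eta$ small, the integer $\bt = p^\Delta\xi'-\xi$ is nonzero and has $O(\log T)$ digits in base $p$, or at most two short blocks. Theorem~\ref{AvgDecay2} with $\ell=0$ then applies, modified as in that claim, and summing over $k$ in a window of length $\asymp T$ gives $\ll T^{1-c'/\log\log T}$.
\item For $\Delta > \Delta_0$, $\bt$ can be huge, up to $p^{T}$, so Theorem~\ref{AvgDecay2} cannot be invoked directly. Here we would reuse the proof rather than the statement. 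The base $p$ expansion of $p^k\bt$ still has only $O(\log T)$ digit changes: a block coming from $\xi$, a run of zeros or of $(p-1)$'s of length about $\Delta$, and a block from $\xi'$. Stewart's theorem therefore still forces base $q$ digit changes in all Stewart intervals outside $O(1)$ affected ones. The qualification and exceptional stages then run verbatim in $k$, giving $\sum_{k} |\hat\mu(p^k\bt)| \ll T^{1-c'/\log\log T}$ uniformly in $\bt$.
\end{itemize}
Summing over $k'$ (or $\Delta$) and over $\xi,\xi'$ with the weights $|\xi|^{-\sig}|\xi'|^{-\sig}$ yields
\[
\int |S_{M,T}|^2 \d\mu \ll_f T^{2-c'/\log\log T} + T,
\]
uniformly in $M \le T$.

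This only beats the trivial $T^2$ bound by a factor $T^{-o(1)}$, which is not enough for $T^{1/2+\del}$. This is the main obstacle. We see two ways to obtain
\begin{equation*}
\int |S_{M,T}|^2 \d\mu \ll_{f,\del} T^{1+\del}.
\end{equation*}
\begin{itemize}
\item \textbf{First attempt.} Use the pointwise bound for fixed $\bt$: as $k$ ranges over all $T$ values, all but $T^{o(1)}$ of them are unexceptional and contribute $\ll T^{-100}$. The exceptional set has size $\ll T^{O(\eta\log(1/\eta))} + (\log T)^{2A}\cdot(\text{exceptional count})$. Hence $\sum_k |\hat\mu(p^k\bt)| \ll T^{\del}$ for every fixed nonzero $\bt$. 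This is the real content of Corollary~\ref{SubseqDecay}, whose bound is uniform in $\bt$ up to polynomial size. Summing over $k'$ then gives $T^{1+\del}$ for the off-diagonal part.
\item \textbf{Fallback.} If that uniformity fails, reduce to high moments: bound $\int |S_{M,T}|^{2r}\d\mu$ by counting solutions of $\sum \pm p^{k_i}\xi_i$ with few digit changes. This seems harder.
\end{itemize}

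Given the $T^{1+\del}$ bound, the conclusion follows by a standard argument. Chebyshev gives $\mu(|S_{0,T}| > T^{1/2+2\del}) \ll T^{-\del}$. Summing over $T = \lfloor (1+\epsilon)^j\rfloor$, or over $T=2^j$ combined with a dyadic chaining over $M$ (Gál--Koksma style, losing only logarithms), Borel--Cantelli shows that for $\mu$-a.e.\ $x$ we have $|S_{0,T}(x)| \ll T^{1/2+2\del}$ along $T=2^j$. Maximal control between consecutive powers comes from the chaining bound on $\max_{M} |S_{M,T'}|$. Dividing by $T$ and renaming $\del$ gives the theorem.
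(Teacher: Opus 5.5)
Your core estimate is the paper's. You expand in Fourier series and dispose of $\max\{|\xi|,|\xi'|\}$ beyond a fixed power of $T$ trivially. For each nonzero $t=\xi+p^h\xi'$ you get $\sum_{k<T}|\hat\mu(p^kt)|\ll T^{\del}$ by rerunning the proof of Theorem~\ref{AvgDecay2} with $\ell=0$: the base-$p$ expansion of $p^kt$ has at most two short blocks, so only $O(1)$ Stewart intervals are lost. Summing over $h$ then gives $T^{1+\del}$. Your direct summation of the diagonal terms is equivalent to the paper's appeal to exponential mixing for Lebesgue measure.

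The \emph{main obstacle} you describe comes from misreading Theorem~\ref{AvgDecay}. It bounds the sum $\sum_{Q<k\le 2Q}|\hat\mu(p^kt)|$ itself, not its average, by $Q^{-c/\log\log Q}$. So your first pass already gives $T^{1+\del}$ for the off-diagonal part, your \emph{first attempt} is exactly the paper's argument, and the high-moment fallback is unnecessary.

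The real difference is the final step, and there your version is the more careful one. The paper applies Markov and Borel--Cantelli along $T=2^\ell$ and then passes directly to all $T$. But for $2^\ell<T<2^{\ell+1}$ the partial sum $\sum_{2^\ell\le k<T}f(p^kx)$ is only trivially $O(T)$. Using $V^f_T\ll T^{\del-1}$ along a denser polynomial sequence with trivial interpolation gives only about $T^{-1/3}$. A chaining argument like yours is what is actually needed.

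To run it, though, \emph{uniformly in $M\le T$} is the wrong hypothesis. The dyadic blocks $[m2^i,(m+1)2^i)\subset[0,2^J)$ have shifts much larger than their lengths, so you need $\int|S_{M,N}|^2\,\mathrm{d}\mu\ll N(M+N)^{\del}$ for all $M<2^J$. This does follow from your per-frequency bound, since $\sum_{M\le k<M+N}|\hat\mu(p^kt)|\le\sum_{k<M+N}|\hat\mu(p^kt)|\ll(M+N)^{\del}$, with frequencies truncated at a power of $M+N$. Then $\int\max_{N<2^J}|S_{0,N}|^2\,\mathrm{d}\mu\ll J^2 2^{J(1+\del)}$, and Chebyshev plus Borel--Cantelli over $J$ give $|S_{0,T}(x)|\ll T^{1/2+\del}$ for every $T$.
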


\begin{rem}
In principle, further refinements would handle the endpoint case $\sigma=1$, thereby permitting any $f$ of bounded variation. We do not pursue this level of precision, since we do not believe bounded variation to be the optimal condition anyway. Perhaps some H\"{o}lder condition is sufficient.
\end{rem}

\begin{proof}
We follow the standard $L^2$ approach. Let
\[
A^f_T(x) = \frac1T \sum_{k=0}^{T-1} f(p^k x),
\qquad
\]
\[
V^f_T=V^f_T(\mu) = \int_\bT (A^f_T(x)-\bE f)^2 \d\mu(x),
\]
where $\bE f = \int_\bT f(x) \d x$.
Plainly, we may assume that $\bE f = 0$. We will see in the next lemma that $V^f_T \ll_{f,\del} T^{\del - 1}$ for any $\delta > 0.$ Using this with Markov's inequality and the first Borel--Cantelli lemma, we see that for any $\delta>0$, and $\mu$-a.e. $x$, 
\[
|A^f_T(x)| \leq T^{\del - 1/2}
\]
holds for all sufficiently large $T=2^\ell$, $\ell\in\bN.$ For such $x,$
\[
\frac1T \sum_{k=0}^{T-1} f(p^k x) \ll T^{\del - 1/2}
\qquad (T \ge 1).
\]
\end{proof}

It remains to establish the lemma that was used in the proof above.

\begin{lem}
Let $f\in C(\bT)$ with $\hat{f}(\xi) \ll_{f,\sig} |\xi|^{-\sig}$ for some $\sig>1$. Let $\mu$ be a missing-digit measure with prime base $q$, and let $p \ge 2$ be an integer such that $q \nmid p$. Let $V_T^f$ be as in the previous proof. Then for any $\delta>0$,
\[
V^f_T \ll_{f,\del} T^{\del - 1}
\qquad (T \ge 1).
\]
\end{lem}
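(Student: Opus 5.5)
Assume $\bE f = 0$, so $\hat f(0)=0$. The plan is to expand the square and pass to Fourier space. Writing $f(x)=\sum_{\xi\ne 0}\hat f(\xi)e(\xi x)$, which converges absolutely because $\sig>1$, we get
\[
V^f_T = \frac{1}{T^2}\sum_{0\le k,k'<T}\ \sum_{\xi,\xi'\ne 0}\hat f(\xi)\overline{\hat f(\xi')}\,\hat\mu(\xi' p^{k'}-\xi p^k),
\]
up to sign conventions. The terms with $\xi p^k=\xi' p^{k'}$ form the Lebesgue contribution. This is exactly $\int_\bT (A^f_T)^2\,\d x$, which is $O_f(T^{-1})$ by the classical argument (\cite[Lemma 1]{Kac46}): the correlation $\int f(x)f(p^jx)\,\d x=\sum_{\xi}\hat f(p^j\xi)\overline{\hat f(\xi)}\ll p^{-j\sig}$ is summable in $j$. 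It therefore remains to bound the off-diagonal sum
\[
E_T=\frac{1}{T^2}\sum_{k\le k'<T}\ \sum_{\xi,\xi'}|\xi|^{-\sig}|\xi'|^{-\sig}\,|\hat\mu(p^{k}(\xi'p^{k'-k}-\xi))|
\]
by $T^{\del-1}$.

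\textbf{Large frequencies.} First truncate to $|\xi|,|\xi'|\le T^{B}$ for a large $B=B(\del,\sig)$. The tail of $\sum|\xi|^{-\sig}$ beyond $T^B$ is $\ll T^{-B(\sig-1)}$, so the discarded part is $\ll T^{-B(\sig-1)}$. Since $\sig>1$ is fixed, choosing $B\ge 1/(\sig-1)$ makes this $O(T^{-1})$.

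\textbf{Small frequencies.} Now fix $\xi,\xi'$ with $|\xi|,|\xi'|\le T^B$, and set $\Delta=k'-k$. The frequency is $\zeta=p^{k}t$ with $t=\xi'p^{\Delta}-\xi\neq0$. Split according to the size of $\Delta$.
\begin{itemize}
\item For $\Delta\le T^{\del/2}$ we have $|t|\ll T^{B}p^{T^{\del/2}}$, and in base $p$ the frequency $p^k t$ consists of a block of $O(\log T + \Delta)$ digits followed by zeros. Sum over $k$ dyadically, $Q<k\le 2Q$. Theorem \ref{AvgDecay2} needs $|t|\le Q^2$, which can fail. Instead, rerun its proof, as the paper does for \eqref{claim: Delta bounded sum}: the structured base-$p$ block affects only $O(1)$ Stewart intervals, once the intervals are taken beyond position $\gg T^{\del/2}$. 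This gives $\sum_{Q<k\le2Q}|\hat\mu(p^kt)|\ll Q^{-c/\log\log Q}\ll T^{\del/4}$. Summing over dyadic $Q$ and over the $T^{\del/2}$ values of $\Delta$, then dividing by $T^2$, gives $O(T^{\del-1})$ after adjusting $\del$. The trivial bound $\sum_k|\hat\mu|\le T$ would already give this for these $\Delta$, but it is too weak once the sum over $\Delta$ is taken.
\item For $\Delta> T^{\del/2}$, interchange the roles and sum over $k'$ with $k$ fixed, exactly as in the localisation step. Writing $\xi' p^{k'}-\xi p^k$, the large power dominates, and its base-$q$ digits in the range far beyond the contribution of $\xi p^k$ behave like those of $\xi' p^{k'}$. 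Applying the Theorem \ref{AvgDecay2} mechanism (the Stewart, qualification and Erd\H{o}s--Kahane argument) to the sum over $k'$ in dyadic ranges should give $\sum_{k'}|\hat\mu(\cdot)|\ll T^{o(1)}$ uniformly in $k$. After summing over $k$ and dividing by $T^2$, this contribution is $T^{o(1)-1}$.
\end{itemize}

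The main obstacle is the second regime. There the frequency is a genuine difference $p^{k'}\xi'-p^k\xi$, which has two separated base-$p$ blocks, rather than a pure $p^{k'}t$. I would handle it as in the proof of \eqref{claim: Delta bounded sum}: the Stewart argument only needs base-$p$ digit changes to be confined to $O(1)$ short blocks, and the $q$-string dynamics of Lemma \ref{Act2} are unaffected beyond the position of the lower block. One must check that the chosen intervals can be placed between the two blocks or above the upper one, so that $\Omega(A)$ intervals survive. This holds because $\Delta>T^{\del/2}$ leaves a gap of $\gg T^{\del/2}$ positions, exceeding the $H/(\log Q)^A$ scale needed, provided $H$ is chosen $\asymp\Delta$.
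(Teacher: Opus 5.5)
Your proposal is correct, but it organises the off-diagonal sum differently from the paper.

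\textbf{The paper's route.} The paper never splits according to $k'-k$. It fixes the difference $h=k'-k$ and sums over $k$. Each frequency $p^k(\xi+p^h\xi')$ then lies on a genuine $\times p$ orbit of the fixed integer $\xi+p^h\xi'$, so Lemma~\ref{Act2} applies verbatim. The only change to the proof of Theorem~\ref{AvgDecay2} (with $\ell=0$) is that the lower base-$p$ block spoils $O(1)$ Stewart intervals. This gives $\sum_{k<T}|\hat\mu(p^k(\xi+p^h\xi'))|\ll T^{\del}$ for every $h\le T$ in one stroke. The price is that $\xi+p^h\xi'$ may be of size $p^{T}$, so the theorem must be re-run rather than quoted.

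\textbf{Your route.} You dispose of $\Delta\le T^{\del/2}$ by the trivial bound; your appeal to Theorem~\ref{AvgDecay2} there is unnecessary. For $\Delta>T^{\del/2}$ you fix $k$ and sum over $k'$. This is the localisation-style use of Theorem~\ref{AvgDecay2}, with $\ell>0$ and $t=\xi'$ of polynomial size.

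\textbf{Making your second regime exact.} The step you leave as ``should give'' needs no two-block Stewart argument and no tracking of moving digit blocks.
\begin{itemize}
\item Since $g\le 1$ in the product formula, $|\hat\mu(n)|\le|\hat\mu(n/q^\ell)|$ for every $\ell\ge 0$.
\item Since $\mu$ lives on $[0,1]$, $\hat\mu$ is $2\pi$-Lipschitz. Take $n=\xi'p^{k'}-\xi p^k$ and choose $\ell$ with $q^\ell\asymp T^3|\xi|p^k$. Then $|\hat\mu(n)|\le|\hat\mu(p^{k'}\xi'/q^\ell)|+O(T^{-3})$, and the error is negligible after summation.
\item The gap $\Delta>T^{\del/2}$ gives $\log(mp^{k'}|\xi'|/q^\ell)\ge \Delta\log p-O(\log T)$. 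This exceeds the threshold $Q^{(c/\log\log Q)^\tau}=T^{o(1)}$.
\item Theorem~\ref{AvgDecay2} then gives $O(1)$ on each of the $O(\log T)$ dyadic ranges of $k'$. So the sum over $k'$ is $O(\log T)$ uniformly in $k$, and this regime contributes $O(T^{-1}\log T)$.
\item One caveat: the hypothesis $|\xi'|\le Q^2$ must be relaxed to $|\xi'|\le Q^{O(1)}$. The theorem's proof tolerates this, since it only uses $\log|t|\ll\log Q$.
\end{itemize}

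\textbf{What each route buys.} Yours reduces to Theorem~\ref{AvgDecay2} essentially as stated, with the lower-order term $\xi p^k$ frozen, at the cost of one case split. The paper's gives a uniform treatment of all $h$ on an exact $\times p$ orbit, at the cost of re-running the theorem's proof. Separately, your handling of the Lebesgue diagonal via $\int f(x)f(p^jx)\,\d x\ll p^{-j\sig}$ is more elementary than the paper's appeal to exponential mixing.
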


\begin{proof}
 We may and will assume that $\bE f=0$. Given the regularity condition of $f$ we have, for all $x\in \bT$,
\[
f(x)=\sum_{\xi\in\bZ} \hat{f}(\xi)e^{-2\pi i \xi x}.
\]
The Fourier series converges absolutely, and now
\[
A^f_T(x)=\frac{1}{T}\sum_{k=0}^{T-1} \sum_{\xi\in\bZ}\hat{f}(\xi)e^{-2\pi i \xi p^k x}.
\]
Consequently,
\[
V_T^f = \frac1{T^2} 
\sum_{k,k'=0}^{T-1} \
\sum_{\xi,\xi' \in \bZ}
\hat{f}(-\xi) \hat{f}(-\xi') \hat{\mu}(p^k\xi + p^{k'}\xi').
\]
The rapid decay of $\hat f$ ensures the absolute convergence of the sum over $\xi$ and $\xi'$. 

Let us now consider the Lebesgue integrals, for which we have $\bE f=0$ and
\[
V^f_T(\Leb)=\int_\bT A^f_T(x)^2 \d x.
\]
We find that $V^f_T(\Leb)$ has a similar  expression in Fourier space, with the condition that $p^k\xi+p^{k'} \xi'=0$. This is because $\hat{\Leb}(\xi) = 0$ whenever $\xi \ne 0$. Hence, 
\[
V^f_T-V^f_T(\Leb) = \frac1{T^2} 
\sum_{k,k'=0}^{T-1} \
\sum_{\substack{\xi,\xi' \in \bZ \\ p^k\xi + p^{k'} \xi' \neq 0}}
\hat{f}(-\xi) \hat{f}(-\xi') \hat{\mu}(p^k\xi + p^{k'}\xi').
\]
We decompose this sum into dyadic ranges and estimate each dyadic sum. For integers $u, u' \geq 0$, let
\[
S_{u,u'} = \frac{1}{T^2} \sum_{k, k' = 0}^{T - 1} \
\sum_{2^u \leq |\xi| <
2^{u+1}} \
\sum_{\substack{2^{u'} \leq |\xi'| < 2^{u'+1} \\ 
p^k \xi + p^{k'} \xi' \neq 0}} \hat{f}(-\xi) \hat{f}(-\xi') \hat \mu(p^k\xi+p^{k'} \xi').
\]
\bigskip

First, we consider $S_{u,u'}$ in the case $2^{u+u'} \leq T^\varpi$, for some fixed $\varpi>0$. 
By symmetry,
\[
S_{u,u'} \ll \frac{1}{T^2} \sum_{2^u \le |\xi| < 2^{u+1}}  \
\sum_{2^{u'} \le |\xi'| < 2^{u'+1}}  \
\sum_{\substack{h=0 \\
\xi + p^h \xi' \ne 0}}^{T} \sum_{k=0}^{T-1}| \hat{f}(-\xi) \hat{f}(-\xi') \hat \mu(p^k\xi+p^{k+h}\xi')|.
\]
We treat $h$ as fixed and consider the sum
\[
\sum_{k=0}^{T-1}|\hat \mu(p^k(\xi+p^{h} \xi'))|.
\]
For $1 \le Q \le T$, we bound the dyadic sum
\[
\sum_{Q < k \le 2Q} |\hat \mu(p^k(\xi + p^h \xi'))|.
\]

As $\xi+p^h\xi'\neq 0$, the $p$-expansion of $\xi+p^{h} \xi'$ is either one block of $O_\varpi( \log Q)$ many digits followed by zeros, or two blocks of $O_\varpi(\log Q)$ many digits separated and followed by zeros. At most $O_\varpi(1)$ many Stewart intervals are affected in the proof of Theorem \ref{AvgDecay2}. Hence, the conclusion of Theorem \ref{AvgDecay2} holds with $\ell = 0$ and asserts that for any $\delta>0,$
\[
\sum_{Q < k \le 2Q} |\hat \mu(p^k(\xi+p^{h} \xi'))|\ll Q^\delta.
\]

\begin{rem}
It is likely that $Q^\delta$ can be improved further, but we do not need the more precise estimate in Theorem \ref{AvgDecay2}. The reason that $Q^\delta$ appears is due to the restriction $\log (mp^k|\xi + p^h \xi'|) > Q^{(c/\log\log Q)^\tau}$ in the sum, upon applying Theorem \ref{AvgDecay2}. Therefore $O(Q^\delta)$ many initial values of $k$ are not accounted for by the theorem.
\end{rem}

In particular,
\[
\sum_{k=0}^{T-1}|\mu(p^k(\xi+p^{h}\xi'))|\ll T^\delta.
\]
Now
\[
S_{u,u'} \ll \frac{1}{T^2} \sum_{2^u \le |\xi| < 2^{u+1}} \sum_{2^{u'} \le |\xi'| < 2^{u'+1}} |\hat{f}(-\xi) \hat{f}(-\xi')| T^{1+\delta}.
\]
This yields
\[
\sum_{2^{u+u'} \le T^\varpi} S_{u, u'} \ll T^{\del - 1}
\]
since, by the rapid decay of $\hat f$,
we have $
\sum_{\xi,\xi'}|\hat{f}(-\xi)\hat{f}(-\xi')|<\infty.
$

\bigskip

Next, we consider the case where  $2^{u+u'} > T^\varpi.$ We bound $|\hat{\mu}| \le 1$ and obtain
\[
S_{u,u'} \ll \sum_{2^u \le |\xi| < 2^{u+1}} \
\sum_{2^{u'}\le |\xi'| < 2^{u'+1}} |\hat{f}(-\xi) \hat{f}(-\xi')| \ll 2^{(\sig-1)(u+u')}.
\]
To summarise we have, by fixing a sufficiently large $\varpi>0,$
\begin{align*}
V^f_T -V^f_T(\Leb) = \sum_{u,u'\geq 0} S_{u,u'} \ll 
T^{1-\del} + T^{\varpi(\sig - 1)} \ll T^{\del - 1}.
\end{align*}

\bigskip
Finally, we consider $V^f_T(\Leb).$ The Lebesgue measure on $\bT$ is $\times p$-invariant, so
\[
V_T^f(\Leb) =
\frac{1}{T^2}\sum_{k,k'=0}^{T-1}\int_{\bT} f(p^k x)f(p^{k'}x) \d x = \frac{1}{T^2}\sum_{k,k'=0}^{T-1}\int_{\bT} f(x) f(p^{|k'-k'|}x) \d x.
\]
By exponential mixing \cite[Theorem 12.3.2]{Viana}, there exists $\rho \in (0,1)$  depending only on $f$ such that 
\[
\int_{\bT} f(x) f(p^{|k'-k'|}x) \d x \ll \rho^{|k'-k|}.
\]
Therefore
\[
V^f_T(\Leb) = \frac{1}{T^2} \sum_{k,k'=0}^{T-1} \int_{\bT} f(x) f(p^{|k'-k|} x) \d x \ll \frac{1}{T},
\]
which shows that
\[
V^f_T = V^f_T(\Leb) + O(T^{\del - 1}) \ll T^{\del - 1}.
\]
\end{proof}

\section{Experimental data}
\label{sec: Exp}

Here, we take an empirical approach to the question posed in \S \ref{ques: CLT}. We choose $p=2$ and $q=3$.

We begin by generating a random $0,1$ sequence of  length $2^{28}$, which is roughly $33.55$MB, on a 202X LINUX machine. 
Let $L,R \in \bN$ with \mbox{$LR\leq 2^{28}$.} We split the random $0,1$ sequence into $R$ many sequences of length $L$. For each such sequence $\omega$, we construct a number in the middle-third Cantor set by forming a ternary expansion with $0$ corresponding the ternary digit $0$, and $1$ corresponding to the ternary digit $2$. 
We regard $x_\omega$ as a random variable whose law is the standard Cantor--Lebesgue measure. We may then evaluate $A^f_T(x_\omega)$ with a suitable choice of $T$, where $f \in C(\bT)$. Then we compute $Z^f_T(x_\omega)$ and make a histogram of $Z^f_T(x_\omega)$ over the $R$ many $\omega$. 

Suppose $TL \leq 2^{28}$. We repeat the procedure above, but this time, we view each $\omega$ as the binary expansion of a dyadic rational in $[0,1)$. The number $x^B_\omega$ can be viewed as a Lebesgue random number. We can compute $Z^f_T(x^B_\omega)$ and study its distribution.

For concreteness, we choose $T=2048,L=2112,R=16384$, $f(x)=\cos(2\pi x)$. We numerically  evaluate $f(x_\omega),\dots,f(2^{T-1}x_\omega)$. When $x_\omega$ is obtained via ternary expansion, we must be sure that the fractional part of $2^{T-1}x_\omega$ still has enough `effective' ($\approx$ aperiodic) digits. 
For a number with $2112$ ternary digits, its binary expansion has over $3000$ effective binary digits. Here the numbers are only heuristics. The intuition is that $2112$ randomly-chosen ternary $0,2$ digits result in over $3000$ seemingly random binary digits. The choice $T=2048$ is much smaller than $3000$, so we should expect $x_\omega, \dots,2^{T-1} x_\omega \mod 1$ to behave randomly. 

\begin{figure}[H]
\includegraphics[width=0.9\textwidth]{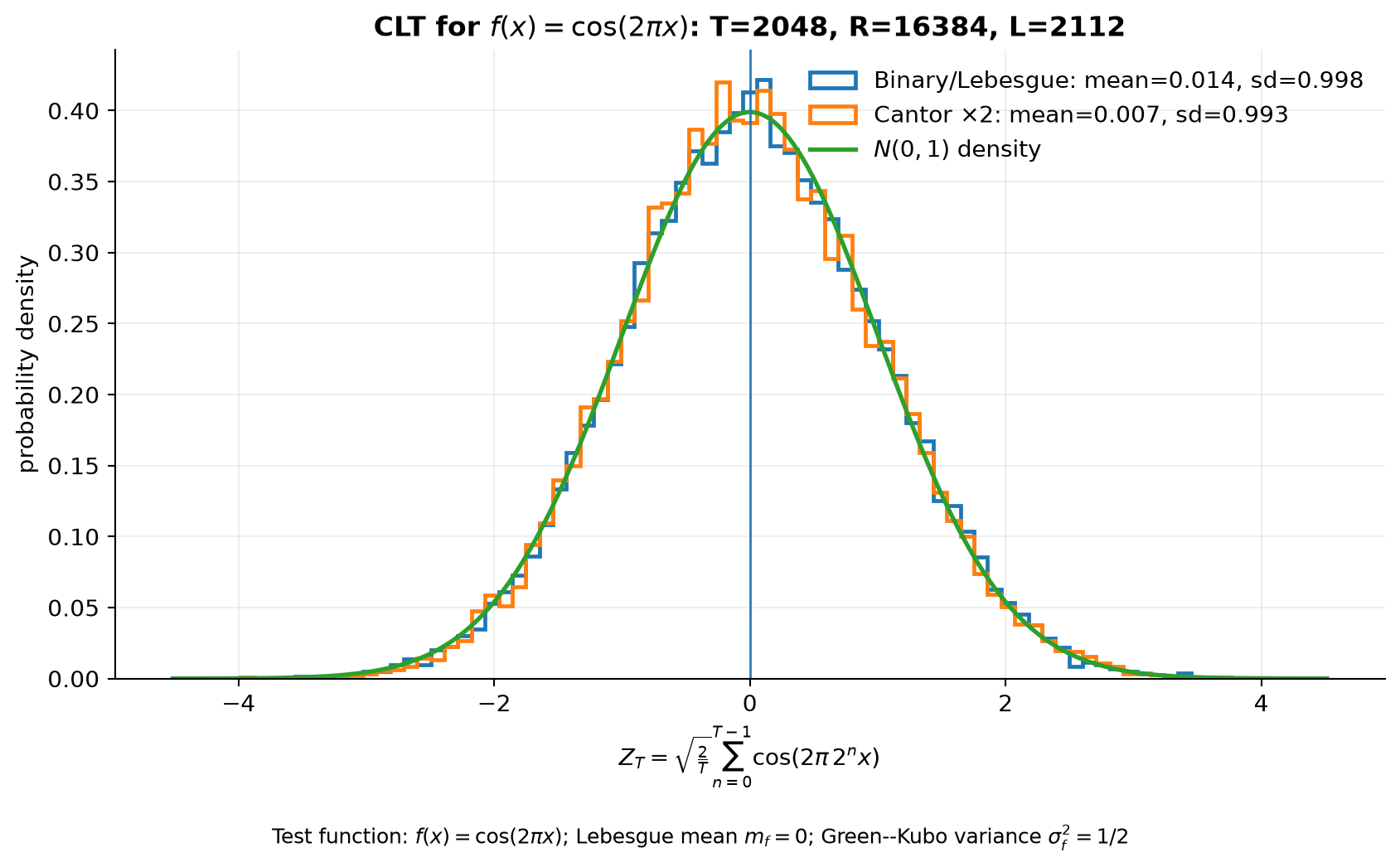}
\caption{CLT plot: the orange curve is the distribution of $x_\omega$ as mentioned above; the blue line is the Lebesgue benchmark.}
\label{GoodFigure}
\end{figure}

This is the aforementioned strong numerical evidence towards an affirmative answer to the question posed in \S \ref{ques: CLT} with $p=2$, $q=3$, $f(x) = \cos(2\pi x)$, and $\mu$ being the Cantor--Lebesgue measure on the middle-third Cantor set. This in turn would show that, in this specific setting, the exponent $1/2$ in Theorem \ref{thm: QN_intro} is optimal. 

\bibliographystyle{amsplain}

\begin{thebibliography}{99}


\bibitem{Alg} A. Algom, \emph{Recent progress on pointwise normality of self-similar measures}. In: Fractal Geometry and Stochastics VII, 2026. Preprint available at arXiv:2504.18192.

\bibitem{ABCY2023}
D. Allen, S. Baker, S. Chow and H. Yu, \emph{A note on dyadic approximation in Cantor's set}, Indag. Math. (N. S.) \textbf{34} (2023), 190--197.

\bibitem{ACY2023} D. Allen, S. Chow and H. Yu, \emph{Dyadic Approximation in the Middle-Third Cantor Set}, Selecta Math. (N. S.) \textbf{29} (2023), Paper No. 11, 49 pages.

\bibitem{Bak2025}
S. Baker, \emph{Approximating elements of the middle third Cantor set with dyadic rationals}, Israel J. Math. \textbf{266} (2025), 285--305.

\bibitem{Ban}
P. Bandi, \emph{Averaged Fourier estimates and dyadic approximation on the Cantor set}, arXiv:2606.27034.

\bibitem{BHZ2026}
T. B\'enard, W. He and H. Zhang, \emph{Khintchine dichotomy for self-similar measures}, J. Amer. Math. Soc. \textbf{39} (2026),  587--623.

\bibitem{BHZ}
T. B\'enard, W. He and H. Zhang, \emph{Khintchine dichotomy and Schmidt estimates for self-similar measures on $\bR^d$}, arXiv:2508.09076.

\bibitem{BDV2006}
V. Beresnevich, 
D. Dickinson and S. Velani, \emph{Measure theoretic laws for lim sup sets}, Mem. Amer. Math. Soc. \textbf{179} (2006).

\bibitem{BV2023}
V. Beresnevich and S. Velani, \emph{The divergence Borel-Cantelli Lemma revisited}, J. Math. Anal. Appl. \textbf{519} (2023), Paper No. 126750, 21 pp.

\bibitem{BP} C. Bishop and Y. Peres, \emph{Fractals in Probability and Analysis}, Cambridge Stud. Adv. Math. \textbf{162,} Cambridge University Press, Cambridge, 2017.


\bibitem{Bre21} J. Br\'{e}mont. Self-similar measures and the Rajchman property. Ann. H. Lebesgue, 4:9731004, 2021.



\bibitem{Cas1950}
J. W. S. Cassels, \emph{Some metrical theorems in Diophantine approximation. I}, Proc. Cambridge Philos. Soc. \textbf{46} (1950), 209--218.

\bibitem{Cas1959} J. W. S. Cassels, \emph{On a problem of Steinhaus about normal numbers}, Colloq. Math. \textbf{7} (1959), 95--101.

\bibitem{CVY2026} S. Chow, P. P. Varj\'{u} and H. Yu, \emph{Counting rationals and diophantine approximation in missing-digit Cantor sets}, Adv. Math. \textbf{488} (2026), 30 pages.

\bibitem{CY1}
S. Chow and H. Yu,
\emph{Moment transference principles and multiplicative diophantine approximation on hypersurfaces}, arXiv:2408.10911.

\bibitem{CY2}
S. Chow and H. Yu, \emph{Simultaneous and multiplicative Diophantine approximation on missing-digit fractals}, arXiv:2412.12070.

\bibitem{DLWW}
X.-R. Dai, B. Li, B. Wang and Y.-F. Wu, \emph{Metric results for dyadic approximation on the middle-third Cantor set}, arXiv:2606.25305.

\bibitem{DJ}
S. Datta and S. Jana, \emph{On Fourier Asymptotics and Effective Equidistribution}, J. Reine Angew. Math., to appear.

\bibitem{DGW24} Y. Dayan, A. Ganguly and B. Weiss, \emph{Random walks on tori and normal numbers in self-similar sets}, Amer. J. Math. \textbf{14} (2024), 467--493.

\bibitem{DS41} R. Duffin and A. Schaeffer, \emph{Khintchine's problem in metric diophantine approximation}. Duke Math. J. \textbf{8} (1941), 243--255.


\bibitem{Erd1940}
P. Erd\H{o}s, \emph{On the smoothness properties of Bernoulli convolutions}, Amer. J. Math. \textbf{62} (1940),
180--186.



\bibitem{Fur1967}
H. Furstenberg, \emph{Disjointness in ergodic theory, minimal sets, and a problem in diophantine approximation}, Math. Syst. Theory \textbf{1} (1967), 1--49.

\bibitem{Fur1970}
H. Furstenberg, \emph{Intersections of Cantor Sets and Transversality of Semigroups}, Problems in Analysis, Princeton University Press 1970, 41--59.



\bibitem{Gal1961}
P. X. Gallagher, \emph{Approximation by reduced fractions}, J. Math. Soc. Japan \textbf{13} (1961), 342--345.

\bibitem{Gra2014}
L. Grafakos, \emph{Classical Fourier Analysis}, third edition, Graduate Texts in Mathematics \textbf{249,} Springer, New York, 2014.








\bibitem{Kac46} M. Kac, \emph{On the distribution of values of sums of the type $\sum f(2^k t)$}, Ann. of Math. (2) \textbf{47} (1946), 33--49.

\bibitem{Kah1971}
J. P. Kahane, \emph{Sur la distribution de certaines series aleatoires}, Colloque Th. Nombres [1969, Bordeaux], Bull. Soc. Math. France, Memoire \textbf{25} (1971), 119--122.






\bibitem{KL2023} O. Khalil and M. Luethi, \emph{Random Walks, Spectral Gaps, and Khintchine's Theorem on Fractals}, Invent. Math. \textbf{232} (2023), pages 713--831.

\bibitem{Khi1924} A. Khintchine, \emph{Einige S\"{a}tze \"{u}ber Kettenbr\"{u}che, mit Anwendungen auf die Theorie der Diophantischen Approximationen}, Math. Ann. \textbf{92} (1924), 115--125.


\bibitem{KM20} D. Koukoulopoulos and J. Maynard, \emph{On the Duffin-Schaeffer conjecture}, Ann. of Math. (2) \textbf{192} (2020), 251--257.



\bibitem{LSV2007} J. Levesley, C. Salp and S. Velani,
\emph{On a problem of K. Mahler: Diophantine approximation and Cantor sets}, Math. Ann. \textbf{338} (2007), 97--118.


\bibitem{Mah1984} K. Mahler,
\emph{Some suggestions for further research}, Bull. Austral. Math. Soc. \textbf{29} (1984), 101--108.













\bibitem{Phi75} W. Philipp, \emph{Limit theorems for lacunary series and uniform distribution $\mod 1$}, Acta Arithmetica 26 (1975), 241--251.

\bibitem{PVZZ2022}
A. D. Pollington, S. Velani, A. Zafeiropoulos and E. Zorin, \emph{Inhomogeneous
diophantine approximation on $M_0$ sets with restricted denominators}, Int. Math.
Res. Not. \textbf{2022,} 8571--8643.


\bibitem{Rap22} A. Rapaport, \emph{On the Rajchman property for self-similar measures on $\bR^d$}, Adv. Math. \textbf{403} (2022).






\bibitem{Ste1980}
C. L. Stewart, \emph{On the representation of an integer in two different bases}, J. Reine Angew. Math. \textbf{319} (1980),
63--72.

\bibitem{TZ} B. Tan and Q. Zhou, \emph{Quantitative Matrix-Driven Diophantine approximation on $M_0$-sets}, arXiv:2504.21555.







\bibitem{VY22} P. P. Varj\'{u} and H. Yu, \emph{Fourier decay of self-similar measures and self-similar sets of uniqueness}, Anal. PDE \textbf{15} (2022), 843--858.

\bibitem{Viana} M. Viana and K. Oliveira, \emph{Foundations of Ergodic Theory}, Cambridge Stud. Adv. Math. \textbf{151,}
Cambridge University Press, Cambridge, 2016.








\bibitem{Yu} H. Yu, \emph{Rational points near self-similar sets}, arXiv:2101.05910.



\end{thebibliography}

\end{document}